\newcommand*{\da@rightarrow}{\mathchar"0\hexnumber@\symAMSa 4B }
\newcommand*{\da@leftarrow}{\mathchar"0\hexnumber@\symAMSa 4C }
\newcommand*{\xdashrightarrow}[2][]{%
  \mathrel{%
    \mathpalette{\da@xarrow{#1}{#2}{}\da@rightarrow{\,}{}}{}%
  }%
}
\newcommand{\xdashleftarrow}[2][]{%
  \mathrel{%
    \mathpalette{\da@xarrow{#1}{#2}\da@leftarrow{}{}{\,}}{}%
  }%
}
\newcommand*{\da@xarrow}[7]{%
  % #1: below
  % #2: above
  % #3: arrow left
  % #4: arrow right
  % #5: space left 
  % #6: space right
  % #7: math style 
  \sbox0{$\ifx#7\scriptstyle\scriptscriptstyle\else\scriptstyle\fi#5#1#6\m@th$}%
  \sbox2{$\ifx#7\scriptstyle\scriptscriptstyle\else\scriptstyle\fi#5#2#6\m@th$}%
  \sbox4{$#7\dabar@\m@th$}%
  \dimen@=\wd0 %
  \ifdim\wd2 >\dimen@
    \dimen@=\wd2 %   
  \fi
  \count@=2 %
  \def\da@bars{\dabar@\dabar@}%
  \@whiledim\count@\wd4<\dimen@\do{%
    \advance\count@\@ne
    \expandafter\def\expandafter\da@bars\expandafter{%
      \da@bars
      \dabar@ 
    }%
  }%  
  \mathrel{#3}%
  \mathrel{%   
    \mathop{\da@bars}\limits
    \ifx\\#1\\%
    \else
      _{\copy0}%
    \fi
    \ifx\\#2\\%
    \else
      ^{\copy2}%
    \fi
  }%   
  \mathrel{#4}%
}
\newtheorem{thm}{Theorem}[section]
\newtheorem{prop}[thm]{Proposition}
\newtheorem{lem}[thm]{Lemma}
\newtheorem{cor}[thm]{Corollary}
\theoremstyle{definition}
\newtheorem{defn}[thm]{Definition}
\theoremstyle{remark}
\newtheorem{remark}[thm]{Remark}
\numberwithin{equation}{section}
\newcommand{\N}{\mathbb{N}}
\newcommand{\F}{\mathbb{F}}
\newcommand{\Z}{\mathbb{Z}}
\newcommand{\Q}{\mathbb{Q}}
\newcommand{\C}{\mathbb{C}}
\renewcommand{\P}{\mathbb{P}}
\newcommand{\A}{\mathcal{A}_g}
\newcommand{\Aut}{\operatorname{Aut}}
\DeclareMathOperator{\Gal}{Gal}
\begin{document}

% Title
\title{Families of isogenous elliptic curves ordered by height}
\author{Yu Fu}

\maketitle
\begin{abstract}{
Given a family of products of elliptic curves over a rational curve defined over a number field $K$, and assuming that there exists no isogeny between the pair of elliptic curves in the generic fiber, we establish an upper bound for the number of special fibers with height at most $B$ where the two factors are isogenous. Our proof provides an upper bound that is dependent on $K$, the family, and the height $B$. Furthermore, by introducing a slight modification to the definition of the height of the parametrizing family, we prove a uniform bound depends solely on the degree of the family, the field $K$, and $B$. Based on the uniformity, and the fact that the idea of using Heath-Brown type bounds on covers and optimizing the cover to count rational points on specific algebraic families has not been exploited much yet, we hope that the paper serves as a good example to illustrate the strengths of the method and will inspire further exploration and application of these techniques in related research. }
\end{abstract}
%\subjclass[2020]{Primary: 14G05; Secondary: 11G05, 11G50}
\maketitle

\section{Introduction}

Let $k$ be a field. Let $X \to S$ be a family of algebraic varieties over $k$, where $S$ is irreducible, and let $X_{\eta}$ be the generic fiber of this family. People care about what properties of $X_{\eta}$ extend to the special fibers and how we can measure the size of specializations such that a specific property does not extend. For example, the Hilbert Irreducibility Theorem says that for a Galois covering $X \to \mathbb{P}^{n}$ over a number field $K$, for most of the rational points $t \in \mathbb{P}^{n}(K)$ the specializations over $t$ generate a Galois extension with Galois group $G$. Moreover, the size of the complement set, which can be considered as locus of \textit{`exceptional'} points, can be bounded as in \cite{Ser}.

In \cite{EEHK09}, Ellenberg, Elsholtz, Hall, and Kowalski studied families of Jacobians of hyperelliptic curves defined over number fields by affine equations
$$y^{2}=f(x)(x-t), \text{     } t\in \mathbf{A}^{1},$$
 with the assumption that the generic fiber is geometrically simple. They proved that the number of geometrically non-simple fibers in this family, with the height of the parametrizer $t \le B$, is bounded above by a constant $C(f)$ depending on $f$ (\cite[Theorem A]{EEHK09}). Moreover, they obtained an effective bound (\cite[Theorem B]{EEHK09}) that shows how the number of geometrically non-simple fibers grows with $B$
 $$|S(B)| \leqslant C\left(g^2 D(\log 2 B)\right)^{11 g^2}.$$
 Their effective bound comes from sieve methods from analytic number theory and not only depends on the coefficients of $f(x)$ but also depends on the primes dividing the discriminant of $f(x)$ and the genus of the family. In contrast, the uniform bounds for rational points on curves over number fields that come from the determinant method gives stronger uniform bounds than sieve methods. 
 
In this paper, we study families of pairs of elliptic curves defined over a rational curve over a number field $K$. To be precise, let $C$ be a rational curve over $K$ isomorphic to $\mathbb{P}^{1}$ which parametrizes a one-dimensional family of pairs of elliptic curves and let $(E_{\eta}, E^{\prime}_{\eta})$ be the generic fiber of this family over $K(t)$, with the assumption that there exists no isogeny between $E_{\eta}$ and $E^{\prime}_{\eta}$. We establish an upper bound for the number of specializations of $(E_{t}, E^{\prime}_{t})$ where the two curves are isogenous, with the assumption that the height of the parameter $t \le B$. Our method is from a purely arithmetic point of view and relies on constructions and optimizations of explicit covers of the moduli space $X(1) \times X(1)$, combined with results of the uniform bound for points of bounded height on curves, which is completely different from \cite{EEHK09}. Notably, by a proper choice of definition of the height of $t$, the resulting upper bound depends solely on $K$ and $B$ and the degree of the parameter family, as detailed in Theorem \ref{uniform version}. In comparison with the effective bound in loc. cit., the power of the logarithmic term we got is much better. Besides this, the uniformity of our bound adds an exciting dimension to the further exploration of our method.

To discuss the results, we first fix the general setting and terminology (see section $\S 2$ for more details). Let $\iota$ denote the map from $C$ to $\mathbb{P}^{3}$ which is a composition of a finite map via the $j$-invariant, followed by the Segre embedding: 

\begin{equation}\label{iota}
	\iota: C \rightarrow X(1) \times X(1)  \rightarrow \mathbb{P}^{1} \times \mathbb{P}^{1} \hookrightarrow \mathbb{P}^{3}.
	\end{equation}
	To be precise, the map $\iota$ sends $t$ to 

$$(E_{t},E^{\prime}_{t}) \mapsto (j(E_{t}), j(E_{t}^{\prime})) \mapsto (j(E_{t})j(E_{t}^{\prime}); j(E_{t}); j(E_{t}^{\prime}); 1).$$

Degrees and heights are computed with respect to this fixed embedding.  Let $H(\iota)$ be the height of $\iota$ defined by the projective height of the coefficients of the defining polynomials of $j(E_{t})$ and $j(E_{t}^{\prime}).$ Let $H: C(K) \to  \mathbb{R} $ be the projective height defined over $K$ (See section \ref{heights}.1).
We prove the following theorem:

\begin{thm}\label{EC}\label{main}
Let $K$ be a number field of degree $d_K$. Let $C$ be a rational curve over $K$ isomorphic to $\mathbb{P}^{1}$ which parametrizes a one-dimensional family of pairs of elliptic curves $(E, E^{\prime})$. Let $(E_{\eta}, E^{\prime}_{\eta})$ be the generic fiber of this family over $K(t)$, and suppose that there exists no $\overline{K(t)}$-isogeny between $E_{\eta}$ and $E^{\prime}_{eta}$. Let $d=\operatorname{deg} \iota^{*}\mathcal{O}_{\mathbb{P}^3}(1)$ be the degree of the parameter family $C$ defined with respect to $\iota$. Define $S(B)$ to be the set 
$$S(B)=\{t \in C(K) \vert    H(t) \le B, \text{ there is an $\overline{\Q}$-isogeny between } E_{t}  \text{ and } E^{\prime}_{t}\}.$$  
There is an absolute constant $M$ such that for any $B \ge M$, we have 
	
	 $$\vert   S(B)\vert    \lesssim_{K} d^{4}\log(d+1)^4( \log H(\iota) + \log B)^{6}.$$ Here $\lesssim_K$ means less than up to a constant multiplier that depends on $K$.
\end{thm}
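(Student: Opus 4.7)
The plan is to stratify the isogenous locus in $X(1) \times X(1)$ by level. The locus of pairs $(E,E')$ admitting an isogeny is the countable union $\bigcup_{N \ge 1} Y_0(N)$, where each $Y_0(N) \subset X(1) \times X(1)$ is cut out by the classical modular polynomial $\Phi_N(X,Y) \in \Z[X,Y]$ of bidegree $\psi(N) = N\prod_{p\mid N}(1+1/p)$ and of logarithmic coefficient height $\Theta(N \log N)$ (Cohen, Bröker--Sutherland). The generic non-isogeny hypothesis ensures $\iota(C) \not\subset Y_0(N)$ for any $N$, so each pullback $F_N(t) := \Phi_N(j(E_t), j(E_t^\prime))$ is a nonzero rational function on $C \simeq \P^1$, and
\[
S(B) \subseteq \bigcup_{N \ge 1} \bigl\{t \in C(K) : H(t) \le B,\ F_N(t) = 0\bigr\}.
\]

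The first main step is to truncate the union to $N \le N_0$ for some $N_0$ that is polynomial in $\log H(\iota) + \log B$. Combining the height comparison $h(j(E_t)) + h(j(E_t^\prime)) \le d\, h(t) + O(\log H(\iota))$ with the $q$-expansion principle for $\Phi_N$ (whose dominant term has coefficient of archimedean size $\asymp N \log N$), one shows that $F_N(t)=0$ together with $h(t) \le \log B$ forces $N \le N_0$ with $N_0 \ll d(\log H(\iota) + \log B)$. Only these moduli can contribute to $S(B)$.

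The second step counts, for each $N \le N_0$, the number of $K$-rational zeros of $F_N$ of height at most $B$. The crude Bezout bound in $\P^1 \times \P^1$ gives $\deg F_N \le d\,\psi(N)$ and summed yields only $O(d N_0^2)$; to reach the stated exponents one must count more carefully. For each dyadic window $N \in (N_0/2^{j+1}, N_0/2^j]$, I would construct an auxiliary cover --- a single low-degree polynomial in $t$ whose zero set captures all isogenous fibers in that window --- and apply a Heath-Brown / Walsh type uniform estimate for $K$-rational points of bounded height on a degree-$D$ plane curve (of the shape $\ll_K D^{2+\epsilon}$ up to logarithmic factors). Balancing the degree of the cover against the residual Bezout contribution in each dyadic window, and summing over the $O(\log N_0)$ windows, then yields the bound. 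The main obstacle is precisely the construction and optimization of these auxiliary covers: the cover must be of small enough degree that Heath-Brown's estimate remains strong, yet contain enough of the isogenous fibers to avoid an unmanageable Bezout residue. Tracking the dependence on $N$, $d$, and $H(\iota)$ inherent in $\Phi_N$ --- and handling CM specializations separately, where the $j$-heights are anomalously small --- is the core quantitative work, and substituting $N_0 \asymp d(\log H(\iota) + \log B)$ into the optimized count is what produces the final $d^{4+\epsilon}(\log H(\iota) + \log B)^{6}$.
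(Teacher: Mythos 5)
Your proposal stratifies the isogenous locus by isogeny degree $N$ via the modular correspondences $Y_0(N)$ and then tries to (a) truncate the union at some $N_0 = N_0(d, H(\iota), B)$ and (b) count each stratum by a Heath-Brown estimate. This is a fundamentally different decomposition from the paper's, which works with a \emph{single} well-chosen prime level $m$: an isogeny of any degree between $E_t$ and $E_t'$ forces the mod-$m$ Galois image into $\tilde H_\Delta$ (for $m$ coprime to the degree) or into a product of parabolics $H_{\mathrm p}$ (for $m$ dividing the degree), so every $t \in S(B)$ lifts to one of two fixed congruence covers $C_{\tilde H_\Delta}$, $C_{H_{\mathrm p}}$, and only then does one embed those covers and apply Castryck--Cluckers--Dittmann--Nguyen / Paredes--Sasyk, optimizing $m \asymp (\log H(\iota)+\log B)^{1/2}$. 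That single-cover strategy is exactly what avoids the need to sum over $N$ at all.

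The main gap in your sketch is the truncation step, which does not hold as argued. The assertion that $F_N(t)=0$ with $h(j(E_t)), h(j(E_t'))$ bounded forces $N \ll d(\log H(\iota)+\log B)$ is false in general: CM pairs are $N$-isogenous for arbitrarily large $N$ at fixed $j$-heights (the paper discards the finitely many CM $t$ separately before any counting), and even in the non-CM case the coefficient growth of $\Phi_N$ does not, by itself, constrain which $(j_1,j_2)$ can lie on its zero locus. What one would actually need is an isogeny estimate in the style of Masser--Wüstholz, which bounds the \emph{minimal} isogeny degree polynomially (not linearly) in the Faltings height; you would then get a much larger $N_0$, and your $\sum_{N\le N_0}$ bookkeeping would need to be redone. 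The second gap is that the step you flag as "the main obstacle" --- constructing, for each dyadic window of $N$, a low-degree auxiliary cover capturing all isogenous fibers --- is precisely the content of the theorem and is left unconstructed; the paper's solution is the explicit modular cover $C_{\tilde H_\Delta}$ together with the function $F = \sum_i j(E_{t,i}) j(E'_{t,i})$ used to embed it in $\mathbb{P}^7$, and a nontrivial argument (Bakker--Tsimerman plus Goursat and full monodromy) to show the embedding is generically injective. Without that construction and without a correct truncation, the $d^{4+\epsilon}(\log H(\iota)+\log B)^6$ exponents in your final line are not derived from anything.
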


\vspace{1em}

	Note that in Theorem \ref{main}, $H(t)$ is the height of $t$ as an element of $\mathbb{P}^{1}_K$. If we modify the definition of the height as follows, namely passing from $\mathbb{P}^{1}$ to $\mathbb{P}^{3}$
    \begin{defn}\label{defniota}
	For a point $P_{t} \in C$ parametrized by $t \in K$, define the height $H(P_{t})$ to be the projective height of $\iota(P_{t}) \in \P^{3}$.\end{defn} 
    and assume that $H(P_t) \le  B$, then we get an \textit{uniform} bound on the number of points $t$ such that $E_t$ and $E^{\prime}_t$ are geometrically isogenous. Moreover, this uniform bound only depends on $K$, the height $B$, and the degree of the family.
	\begin{thm}\label{uniform version}
		Assume the same hypothesis as in Theorem \ref{main}. Let $S^{\prime}(B)$ be the set $$S^{\prime}(B)=\{t \in C(K) \vert    H(P_{t}) \le B, \text{ there is an $\overline{\Q}$-isogeny between } E_{t}  \text{ and } E^{\prime}_{t}\}.$$
		Then we have $$\vert   S^{\prime}(B)\vert  \lesssim_{K} d^{4}(\log B)^{6}.$$	
	\end{thm}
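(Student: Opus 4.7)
The plan is to re-run the proof of Theorem~\ref{main} with the height $H(t)$ replaced throughout by $H(P_t) = H(\iota(P_t))$. In Theorem~\ref{main}, the factor $\log H(\iota)$ enters because every height estimate on a point of $C$ is eventually converted into a projective height on $\mathbb{P}^3$, and the conversion $H(P_t) \lesssim H(\iota) \cdot H(t)^d$ costs a $\log H(\iota)$ term. With the revised definition the hypothesis $H(P_t) \le B$ is exactly the quantity that the counting theorems consume, so this conversion becomes trivial and the $\log H(\iota)$ contribution disappears, leaving only $\log B$ and the geometric degree $d$.

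Concretely, I would first invoke the Masser--W\"ustholz isogeny theorem (or a refinement due to Gaudron--R\'emond or Pellarin) to bound the minimum degree $n(t)$ of a $\overline{\Q}$-isogeny $E_t \to E'_t$, when one exists, by a polynomial in the Faltings heights of $E_t$ and $E'_t$. Each Faltings height is at most $\log \max(H(j(E_t)), H(j(E'_t))) + O_K(1) \le \log H(P_t) + O_K(1) \le \log B + O_K(1)$ by the definition of $\iota$. This yields a uniform estimate $n(t) \lesssim_K (\log B)^{\alpha}$ for some fixed exponent $\alpha$. For each such $n$, the points $t \in S'(B)$ whose isogeny degree is $n$ lie in the finite pullback $\iota^{-1}(Y_0(n))$, a subscheme of $C$ whose degree is $O(nd)$ by B\'ezout, using the no-generic-isogeny hypothesis.

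The $K$-rational points of these pullbacks with $H(P_t) \le B$ are then counted by the same cover-optimization and Heath-Brown--type determinant argument used in the proof of Theorem~\ref{main}. Because the input height is now the projective height on $\mathbb{P}^3$, the sharp determinant-method estimates (Walsh/Salberger) apply with no intermediate height conversion, producing bounds directly in terms of $\log B$ and the degrees of the auxiliary covers. Summing over $n \lesssim_K (\log B)^{\alpha}$ and optimizing the cover exactly as in the proof of Theorem~\ref{main} yields the bound $|S'(B)| \lesssim_K d^4 (\log B)^6$. The main obstacle is the bookkeeping: one must verify that every occurrence of $\log H(\iota)$ and every $\epsilon$-loss in the $d$-dependence in the proof of Theorem~\ref{main} is traceable to the conversion between $H(t)$ and $H(P_t)$, so that changing the height cleanly eliminates both and leaves $d^4(\log B)^6$ in place of $d^{4+\epsilon}(\log H(\iota) + \log B)^6$.
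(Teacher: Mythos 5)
Your opening paragraph captures exactly the modification the paper makes: in the proof of Theorem \ref{main} the only place $H(\iota)$ enters is Lemma \ref{5.3}, which converts the bound $H(t)\le B$ into $H(P_t)\le (d+1)H(\iota)B^d$; under the hypothesis $H(P_t)\le B$ this conversion becomes vacuous, so Proposition \ref{change of heights} is replaced by the cleaner estimates of Corollary \ref{corollary of change of heights}, and re-running the optimization with $m$ a prime of size roughly $(\log B)^{1/2}$ yields $d^4(\log B)^6$. Had you stopped there you would have reconstructed the paper's argument.

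However, your second and third paragraphs introduce a mechanism the paper does not use and which you have not carried through coherently. The paper never bounds the minimal isogeny degree $n(t)$ — no Masser--W\"ustholz or Gaudron--R\'emond estimate appears anywhere. Instead, a \emph{single} prime $m$ is chosen once, and Lemma \ref{covers} splits into exactly two cases: if $m\nmid n(t)$ the point lifts to the twisted diagonal quotient $C_{\tilde H_\Delta}$, and if $m\mid n(t)$ it lifts to the parabolic quotient $C_{H_{\operatorname{p}}}\cong C\times_{X(1)^2}(X_0(m)\times X_0(m))$. One then counts $K$-points of bounded height on these two \emph{curves} via Theorem \ref{PS}. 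Your alternative — bounding $n(t)\lesssim_K(\log B)^\alpha$ and summing over the pullbacks $\iota^{-1}(Y_0(n))$ — is a genuinely different route, but as stated it is internally inconsistent: for each fixed $n$ the hypothesis that $E_t$ and $E'_t$ are not generically isogenous forces $C\not\subset Y_0(n)$, so $\iota^{-1}(Y_0(n))$ is a zero-dimensional scheme of B\'ezout degree $O(\psi(n)d)$. There is nothing for a Heath-Brown or determinant-method estimate to count on a finite scheme; B\'ezout already finishes that step. You would simply sum $O(\psi(n)d)$ over $n\lesssim(\log B)^\alpha$, producing a bound of the shape $d\cdot(\log B)^{2\alpha}$ — which is of a different form from $d^4(\log B)^6$ and does not follow the paper's proof at all. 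If you instead meant to pull $C$ back along $X_0(n)\times X_0(n)\to X(1)\times X(1)$ for every $n$ and count on those covers, that contradicts the single-$m$ cover-optimization you then claim to carry out. You should drop the Masser--W\"ustholz detour entirely: the lifting dichotomy of Lemma \ref{covers} for the one optimized prime $m$ is the whole point of the construction, and your first paragraph already contains the correct proof modulo writing down the modified Corollary \ref{corollary of change of heights} and re-evaluating the exponents.
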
 
\subsection*{Relations with Unlikely Intersections}
Although Theorem \ref{EC} indicates the sparsity of isogeny between elliptic curves in a family, one should emphasize that this is not an unlikely intersection problem on its own. There are infinitely many $t \in \overline{\Q}$ such that $E_{t}$ and $E^{\prime}_{t}$ are isogenous! However, since we are working over a fixed number field $K$, this number has to be finite. Nevertheless, one may consider questions in more generalized settings.

 Let $S$ be a GSpin Shimura variety and denote by $\{Z_{i}\}_{i}$ a sequence of special divisors on $S$. Let $C \hookrightarrow S$ be a curve whose generic fiber has \textit{maximal monodromy}. 
 
 \begin{defn}
 Define the set $Z(C)$ to be the intersection of $C$ with the infinite union of the special divisors
 $$Z(C)=C \cap \bigcup_{i} Z_{i}.$$ 
 \end{defn}

 If we take $S=X(1) \times X(1)$ as a special case of GSpin Shimura variety and take the special divisors to be $Z_{n}=Y_{0}(n)$, which parametrizes $n$-isogenous pairs of elliptic curves, then Theorem \ref{uniform version} can be reformulated as follows:
 
 \begin{thm}
 	Let $K$ be a number field of degree $d_K$. Let $C \subset  X(1) \times X(1)$ be a rational curve defined over $K$ parametrizing a family of non-isotrivial and generically non-isogenous elliptic curves. Let $d$ be the projective degree of $C$ defined by $\iota$. For a positive integer $B$ define the set $Z(C;B)$ to be the set of $K$-valued points on $C$ such that
 
 $$Z(C;B)=\{x \in Z(C) \mid H(\iota(x)) \le B\}.$$   
 We have $$|Z(C;B)| \lesssim_{K} d^{4}(\log B)^{6}.$$
 \end{thm}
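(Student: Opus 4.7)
The plan is to derive this theorem as a direct reformulation of Theorem \ref{uniform version} in the geometric language of GSpin Shimura varieties, so the argument reduces to checking that the definitions in the two statements match up. The first identification I would make explicit is the modular interpretation of the special divisors: a geometric point $(E, E') \in X(1) \times X(1)$ lies on $Y_0(n)$ if and only if there is a cyclic $n$-isogeny from $E$ to $E'$. Since any isogeny between elliptic curves factors through a cyclic one---one can always quotient the kernel by its largest full multiplication-by-$m$ subgroup---taking the union over all $n \ge 1$ yields
\[
\bigcup_{n \ge 1} Y_0(n) = \{(E, E') \in X(1) \times X(1) : E \text{ and } E' \text{ are isogenous}\}.
\]

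Next I would translate the set $Z(C;B)$ into the notation of Theorem \ref{uniform version}. Intersecting the displayed identity with $C(K)$ shows that the $K$-rational part of $Z(C) = C \cap \bigcup_n Y_0(n)$ is exactly the set of parameters $t \in C(K)$ for which $E_t$ and $E_t'$ admit an $\overline{\Q}$-isogeny. The height constraint $H(\iota(x)) \le B$ coincides, by Definition \ref{defniota}, with $H(P_t) \le B$. Thus $Z(C;B)$ equals the set $S^{\prime}(B)$ appearing in Theorem \ref{uniform version}, and the projective degree $d$ of $C$ defined via $\iota$ is the same quantity in both formulations. The non-isotriviality and generic non-isogeny hypotheses transcribe verbatim to the hypotheses of Theorem \ref{uniform version}.

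Given these identifications, the result is immediate from Theorem \ref{uniform version}: one simply invokes $|S^{\prime}(B)| \lesssim_K d^4 (\log B)^6$. There is no genuine obstacle here; the proof is a matter of unwinding definitions and recognizing that $Z(C)$ is the locus of isogenous fibers. The value of the reformulation is conceptual, placing the main theorem inside the broader framework of unlikely intersections on GSpin Shimura varieties and suggesting that the cover-optimization method behind Theorem \ref{uniform version} is a natural candidate for attacking analogous bounds on intersections with special divisors in higher-dimensional orthogonal Shimura varieties.
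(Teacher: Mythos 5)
Your proposal is correct and matches the paper's own treatment: the paper presents this theorem explicitly as a reformulation of Theorem \ref{uniform version} after identifying $\bigcup_n Y_0(n)$ with the isogeny locus and $H(\iota(x))$ with $H(P_t)$, which is exactly the chain of identifications you spell out. No difference in approach.
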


\vspace{1em}
The discussion above suggests the following question (which we do not claim to know the answer to):
 
 \vspace{1em}
 
 \noindent \textbf{Question:} Suppose $C$ is a curve defined over $\overline{\Q}$ and let $Z(C;B)$ denote the set of $\overline{\Q}$-valued points in $Z(C)$ whose absolute height is bounded above by $B$. 
 One may ask if $Z(C;B)$ is finite. If this is the case, can we get an upper bound in terms of the bounded height $B$?
 
\subsection*{Non-simple abelian varieties in a family}
The method of the proof of Theorem \ref{main} can be used to generalize the results of \cite[Theorem B]{EEHK09}. Let $\mathcal{A}_{g}$ be the moduli space of principally polarized abelian varieties of dimension $g$ and let $n$ be the dimension of $\A$ over a number field $K$. Let $N=n+1$. In recent work, we consider arbitrary families of abelian varieties parametrized by irreducible rational curves over $K$, and prove the following theorem.
\begin{thm}\cite[Theorem 1.4]{FuNonsimple}
	Let $K$ be a number field of degree $d_K$ over $\mathbb{Q}$. Let $X \in \A$ be a rational curve over $K$ parameterizing a one-dimensional family of abelian varieties. Let $A_\eta$ be the generic fiber of this family, and assume that the geometric monodromy of $A_\eta$ modulo $\ell$ is the full symplectic group $\operatorname{Sp}_{2g}(\F_\ell)$ for almost all primes $\ell$. Let $\iota \colon X \to \mathbb{P}^N$ be the restriction of the embedding $\mathcal{A}_g \to \mathbb{P}^N$ defined by the Hodge bundle and let $d$ denote the degree of $X$ with respect to $\iota$. Let $S(B)$ be the non-simple specializations with height of the parameter at most $B$. Then there exists a constant $C_{K,g, \iota}$ depends on $d_K$, $g$ and the family $X$, and an absolute constant $\kappa$ such that $$\vert S(B) \vert  \le C_{K,g,\iota}d^2(\log B)^{2+\kappa}$$ for all $B \ge 1$. 
  
\end{thm}
We note that all upper bounds for the number of non-simple fibers studied in previous literature depend on the explicit defining equation of $X$, and a non-effective upper bound that does not depend on $X$ would follow inexplicitly from Lang's conjecture via the result of Caporaso-Harris-Mazur \cite{CHM}, as explained in \cite{EEHK09}.

 \vspace{1em}

\noindent \textbf{Organization of the paper.} In $\S 2$, we recall the notion of heights on projective spaces, Hecke correspondence, the modular diagonal quotient surfaces, and some results on the Heath-Brown type bounds for curves, which we will use later. In $\S 3$, we interpret the counting problem into counting rational points on projective curves with certain level structures and construct `nice' Galois covers that capture the information of being isogenous. In $\S 4$, we construct certain projective embeddings with respect to the covers in $\S 3$, such that the dimension growth conjecture applies. In $\S 5$, we give an upper bound on the change of heights between covers so that one can bound the height of the lifting points. Finally, we prove Theorem \ref{main} and Theorem \ref{uniform version} in $\S 6$.

\section{Preliminaries}

This section states and proves results and facts in arithmetic geometry, primarily focusing on the theory of heights and the outcomes of dimension growth conjectures for curves. Specifically, we make a survey of uniform bounds for points of bounded height on curves, a crucial ingredient in our proofs, which are utilized in this article. While these concepts are likely familiar to experts, we compile them here for the reader's convenience.

Let $C$ be a rational curve over $K$ isomorphic to $\mathbb{P}^{1}$ parametrizing a one-dimensional family of pairs of elliptic curves, and let $(E_{\eta}, E^{\prime}_{\eta})$ be the generic fiber of this family over $K(t)$, with the assumption that there exists no isogeny between $E_{\eta}$ and $E^{\prime}_{\eta}$. For specializations at $t \in K$, we write $E_{t}$ and $E^{\prime}_{t}$ in the Weierstrass form:

\begin{equation}\label{eq1}
E_{t}: y^{2} = x^{3} + f(t)x + g(t)
\end{equation}

\begin{equation}\label{eq2}
E^{\prime}_{t}: y^{2} = x^{3} + f^{\prime}(t)x + g^{\prime}(t).
\end{equation}

Here, $f(t)$, $g(t)$, $f^{\prime}(t)$, and $g^{\prime}(t)$ are rational functions over $K$. Consequently, the $j$-invariants of $E_{t}$ and $E^{\prime}_{t}$, denoted by $j(E_t)$ and $j(E^{\prime}_t)$, are also rational functions.

\vspace{1em}
\noindent 2.1. \label{heights} \textbf{Heights of points.} For a rational point $a \in K$, define the height $H(a)$ of $a$ to be 
$$H(a):= \prod_{v \in M_{K}} \max\{1, |a|_{v}\}^{\frac{n_{v}}{[K:\Q]}}$$ 
where $n_{v}=[K_{v}:\Q_{v}].$

The height of a $K$-rational point $P \in \P^{n}$ with homogeneous coordinates $P=(x_{0},\cdots, x_{n})$ is defined to be 

$$H(P)=\prod_{v \in M_{K}}\max \{\vert   x_{0}\vert   _{v}, \cdots, \vert   x_{n}\vert   _{v}\}^{\frac{n_{v}}{[K:\Q]}}$$
where $n_{v}=[K_{v}:\Q_{v}].$
 Recall that we define $\iota$ to be the map obtained via the $j$-invariant and via the Segre embedding in $\mathbb{P}^{3}$, see (\ref{iota}).
 
Recall the definition of $H(P_{t})$ in Definition \ref{defniota}. The following lemma indicates that $$H(P_{t})=H(j(E_{t}))H(j(E_{t}^{\prime})).$$
\begin{lem}\label{ProductHeights}
Let $\sigma_{n}$ be the Segre embedding of $n$-copies of $\P^{1}$
$$\sigma_{n}\colon \underbrace{\P^{1} \times \P^{1} \times \ldots \times \P^{1}}_{n\text{-times}} \hookrightarrow \P^{2^{n}-1}.  $$  Let $H(.)$ be the projective height defined above. We have 
$$H(\sigma_{n}(x_1, \cdots, x_{n}))=H(x_{1})\cdots H(x_{n}).$$
	
\end{lem}

\begin{proof}
	By definition of the projective height,
			$$H(\sigma_{n}(x_1, \cdots, x_{n})) = \prod_{v \in M_{K}}\max \{   |\prod_{1 \le i \le n}x_{i}|_{v}, \cdots, |x_{1}|_{v}, \cdots, |x_{n}|_{v}, 1\}^{\frac{n_{v}}{[K:\Q]}}$$
and $$H(x_{1})\cdots H(x_{n})=\prod_{v \in M_{K}} \{\max\{1, |x_{1}|_{v}\} \cdots \max\{1, |x_{n}|_{v}\}\}^{\frac{n_{v}}{[K:\Q]}}.$$
A direct observation shows that for each $v \in M_{K}$, 
$$\max\{ |\prod_{1 \le i \le n}x_{i}|_{v}, \cdots, |x_{1}|_{v}, \cdots, |x_{n}|_{v}, 1 \} = \max\{1, |x_{1}|_{v}\} \cdots \max\{1, |x_{n}|_{v}\}.$$
\end{proof}

\vspace{1em}  

\noindent 2.2. \textbf{The modular diagonal quotient surfaces.} Later in this article, we will define the modular surface $X_{\tilde{H}_{\Delta}}(m)$ (see (\ref{XH})) that lies in a special family, called the \textit{modular diagonal quotient surfaces}, which arise naturally as the (coarse) moduli space to the moduli problem that classifies isomorphisms between mod $m$ Galois representations attached to pairs of elliptic curves $E / K$.

The modular curve $X(m)$ is a Galois cover of $X(1)$ with Galois group $$G=SL_{2}(\Z/m\Z)/\{\pm 1\}.$$ 
Let $\epsilon$ be an element in $(\Z/m\Z)^{\times}$. Let $\alpha_{\epsilon}$ be the automorphism of $G$ defined by conjugation with $Q_{\epsilon}=\big(\begin{smallmatrix}
  \epsilon & 0\\
  0 & 1
\end{smallmatrix}\big)$, i.e. $\alpha_{\epsilon}(g)=Q_{\epsilon}gQ_{\epsilon}^{-1}$. The product surface $X(m) \times X(m)$ carries an action of the twisted diagonal subgroup of $SL_{2}(\Z/m\Z) \times SL_{2}(\Z/m\Z)$ defined by $$\alpha_{\epsilon} \text{ : } \Delta_{\epsilon}=\{(g, \alpha_{\epsilon}(g)): g \in G\}.$$

\begin{defn}
	The twisted diagonal quotient surface defined by $\alpha_{\epsilon}$ is the quotient surface $X_{\epsilon}:= \Delta_{\epsilon} \backslash X(m) \times X(m) $ obtained by the action of $\Delta_{\epsilon}$.
\end{defn}
$X_{\epsilon}$ can be viewed as the moduli space of the triple $(E_1, E_2, Q)$, where 
$$Q: E_{1}[m] \stackrel{\sim}{\rightarrow} E_{2}[m] $$
multiplies the Weil pairing by $\epsilon$.

The modular diagonal quotient surfaces and their modular interpretation are widely used in studying Mazur's question\cite{Maz78} \cite{HK00}, Frey's conjecture\cite{Fre97}, and so on.

\vspace{1em}
\noindent 2.3. \textbf{Uniform bound for points of bounded height on curves.}   

Let $X$ be an irreducible projective curve defined over a number field $K$ with a degree $d$ embedding in $\P^{n}$. Denote by $N(X,B)$ the set of $K$-rational points on $X \subseteq \P^{n}$ of projective heights at most $B$. 
Heath-Brown proved a uniform bound for rational points on a curve $X$ with bounded height in \cite[Theorem 5]{Heath-Brown}, which addresses that 
$$N(X,B) \le O_{\epsilon }(B^{2/d+\epsilon}).$$

The result on irreducible projective curves with the removal of the term $B^{\epsilon}$ without the need for $\log B$ was proved by Walsh in \cite{Walsh}, using a combination of the determinant method based on the $p$-adic approximation introduced by Heath-Brown \cite{Heath-Brown} with the method due to Ellenberg and Venkatesh in \cite{EV05}. The global determinant method, developed by Salberger \cite{Sal23}, was important in removing the term $B^\varepsilon$. The uniform upper bound on $N(X,B)$ with an explicit term of polynomial growth depends on $d$ was proved by Castryck, Cluckers, Dittmann and Nguyen \cite{CCDN19}, based on this technique. 

\begin{thm}\cite[Theorem 2]{CCDN19}\label{CCDN}
	Given $n \ge 1$, there exists a constant $c=c(n)$ such that for all $d > 0$ and all integral projective curves $X \in \P^{n}_{\Q}$ of degree $d$ and all $B \ge 1$ one has
	
	$$\vert   N(X,B)\vert    \le cd^{4}B^{2/d}.$$
	
\end{thm}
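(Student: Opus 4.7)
The plan is to use Heath-Brown's $p$-adic determinant method, in the form developed by Walsh and refined by Castryck--Cluckers--Dittmann--Nguyen, to cover $N(X,B)$ by intersections of $X$ with auxiliary hypersurfaces not containing $X$, and then conclude by B\'ezout. Concretely, I would first fix an auxiliary prime $p$ (to be optimized) and partition $N(X,B)$ according to residues in $X(\mathbb{F}_p)$; by elementary bounds or Lang--Weil, the number of residue classes is $O(dp)$. For each residue class $\bar{x} \in X(\mathbb{F}_p)$, the goal is to produce a homogeneous form $F_{\bar{x}}$ of some degree $D$ not vanishing identically on $X$ such that every $x \in N(X,B)$ reducing to $\bar{x}$ satisfies $F_{\bar{x}}(x) = 0$.

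The construction of $F_{\bar{x}}$ is the heart of the method. Let $V_D$ denote the image of the space of degree-$D$ forms on $\P^n$ inside the homogeneous coordinate ring of $X$; for $D \gg 1$ this has dimension $r \sim dD$. Picking a basis $f_1,\ldots,f_r$ of $V_D$ and points $x_1,\ldots,x_r$ in the residue class, consider the evaluation matrix $M = (f_i(x_j))$. Using a local parameter $t$ at $\bar{x}$ (so that $t(x_j) \equiv 0 \pmod{p}$) together with suitable column operations, one shows $\det M$ is divisible by $p^{r(r-1)/2}$. On the archimedean side each entry has size $O(B^D)$, so $|\det M| \ll (CB^D)^r$. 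When $p^{r(r-1)/2} > (CB^D)^r$, which becomes $p \gtrsim B^{2D/(r-1)} \sim B^{2/d}$ after substituting $r \sim dD$, the matrix is singular and produces a nonzero $F_{\bar{x}} \in V_D$ vanishing at all $x_j$, hence on every point of the residue class. B\'ezout then bounds each class by $dD$ points, and summing yields $|N(X,B)| \ll d^2 p D$.

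Feeding in $p \sim B^{2/d}$ and taking $D$ as a small fixed function of $d$ produces a bound of the shape $d^4 B^{2/d+\epsilon}$; the $B^\epsilon$ factor is removed by the Ellenberg--Venkatesh averaging over primes, as implemented by Walsh, which chooses the prime adapted to each residue class rather than globally. The $d^4$ dependence is a bookkeeping total: one $d$ from the point count on $X(\mathbb{F}_p)$, one $d$ from B\'ezout, and two more from the dimension of $V_D$ (which controls both the threshold $D$ needed for the auxiliary polynomial to exist and the margin required so that $F_{\bar{x}}$ does not vanish on $X$). The main obstacle is the simultaneous optimization of $p$ and $D$ while keeping \emph{every} implicit constant polynomial in $d$; in particular, verifying the $p$-adic divisibility $p^{r(r-1)/2} \mid \det M$ needs a delicate treatment of the Taylor expansions of sections of $V_D$ at $\bar{x}$, and guaranteeing that $F_{\bar{x}}$ does not vanish identically on the degree-$d$ curve $X$ requires a careful dimension count in the linear system $V_D$ that is sensitive to the geometry of the embedding $X \hookrightarrow \P^n$.
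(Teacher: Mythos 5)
This statement is not something the paper proves: it is quoted verbatim from \cite{CCDN19} (Theorem 2 there), and the paper's entire ``proof'' is the citation. So the comparison you want is against the Castryck--Cluckers--Dittmann--Nguyen argument itself, not against anything in the present manuscript.

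As a sketch of the CCDN proof, what you have written is broadly faithful: partition by residues modulo an auxiliary prime, the $p$-adic determinant method to produce a low-degree auxiliary form per class, B\'ezout on $X \cap \{F_{\bar{x}}=0\}$, and the Ellenberg--Venkatesh/Walsh averaging over primes to remove the $B^{\epsilon}$. The overall logic that singularity of the $r\times r$ evaluation matrix for \emph{every} choice of $r$ points in a class forces a nonzero $F_{\bar{x}}\in V_D$ vanishing on the whole class is correct, and your calibration $p \gtrsim B^{2D/(r-1)} \sim B^{2/d}$ with $r \sim dD$ is the right heuristic threshold.

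Two caveats on precision. First, your $d$-accounting for the exponent $4$ is a plausible heuristic but not the actual bookkeeping in \cite{CCDN19}: obtaining the clean polynomial dependence $d^{4}$ (with no $\log B$ and no $d^{\epsilon}$) is the genuinely delicate part of their paper and requires controlling the Hilbert function of $X$ uniformly in $d$ and $n$, handling bad reduction and singular fibers, and choosing the degree $D$ of the linear system so that the margin $p^{r(r-1)/2} > (CB^{D})^{r}$ and the nonvanishing of $F_{\bar{x}}$ on $X$ hold simultaneously with constants independent of $X$; it is not simply ``one $d$ per step.'' Second, saying ``the number of residue classes is $O(dp)$ by Lang--Weil'' is fine as an upper bound via linear projection, but the Walsh-style removal of $B^{\epsilon}$ does not proceed by fixing $p$ and summing over classes; it assigns a prime (or short interval of primes) to each point according to its arithmetic complexity and then performs a more global count, which is the step your last paragraph gestures at but does not actually carry out. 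So what you have is a correct account of the strategy and of where the difficulties lie, but not a proof; for the purposes of this paper that is immaterial, since the result is imported as a black box (and, in the form actually used in Section~6, superseded by the Paredes--Sasyk global-field version, Theorem~\ref{PS}).
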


A year later, Paredes and Sasyk \cite{PS22} extended the work of Castryck, Cluckers, Dittmann, and Nguyen to give uniform estimates for the number of rational points of bounded height on projective varieties defined over global fields. More precisely, they proved the following extension of \cite[Theorem 2]{CCDN19} to global fields.

\begin{thm}\cite[Theorem 1.8]{PS22} \label{PS}
Let $K$ be a global field of degree $d_K$. Let $H$ be the absolute projective multiplicative height. For any integral projective curve $C \subseteq \mathbb{P}_K^N$ of degree $d$ it holds
$$
|\{\boldsymbol{x} \in C(K): H(\boldsymbol{x}) \leq B\}| \lesssim_{K, N} \begin{cases}d^4 B^{\frac{2 d_K}{d}} & \text { if } K \text { is a number field, } \\ d^8 B^{\frac{2 d_K}{d}} & \text { if } K \text { is a function field. }\end{cases}
$$
\end{thm}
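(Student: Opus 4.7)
The plan is to adapt the $p$-adic determinant method of Heath-Brown and Salberger, as refined in \cite{CCDN19}, from $\Q$ to an arbitrary global field $K$. Over $\Q$, the proof of \cite[Theorem 2]{CCDN19} produces, for every integral curve $C \subseteq \P^N_{\Q}$ of degree $d$, an auxiliary polynomial $G \in \Q[X_0,\dots,X_N]$ of controlled degree not vanishing on $C$, whose zero locus contains every rational point of height at most $B$; B\'ezout then gives $|C \cap \{G=0\}| \le d \cdot \deg G$, and the target bound follows after an optimization. My plan is to reproduce this blueprint over $K$, with the exponent of $B$ corrected by the global degree $d_K$ coming from the product formula over all places $v \in M_K$.

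First I would rewrite the height condition in arithmetic terms. After a choice of fractional-ideal representative (only $h_K = O_K(1)$ classes), each projective point of height $\le B$ is represented by essentially primitive coordinates $(x_0,\dots,x_N) \in \mathcal{O}_K^{N+1}$ whose archimedean local maxima satisfy $\prod_{v \mid \infty} \max_i |x_i|_v^{n_v} \le B^{d_K}$. This replaces the single size bound $\|x\|_\infty \le B$ used over $\Z$ by a product of archimedean local sizes; it is the reason every occurrence of $B$ in the CCDN exponents is upgraded to $B^{d_K}$ when passing from $\Q$ to $K$.

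Second, I would execute the local determinant method at a non-archimedean place $\mathfrak{p}$ of $K$ chosen so that $C$ has good reduction at $\mathfrak{p}$ and the residue field $\kappa(\mathfrak{p})$ has cardinality $q$ of order $B^{d_K/d}$; by Chebotarev such a place exists after absorbing finitely many bad primes into a constant depending on $K$. For each smooth closed point $\bar y \in \bar C(\kappa(\mathfrak{p}))$, one applies a Bombieri--Vaaler style Siegel lemma over $\mathcal{O}_K$ to construct a form $G_{\bar y}$ whose Taylor expansion at $\bar y$ in uniformizing coordinates is divisible by a large power of a uniformizer of $\mathfrak{p}$, and whose archimedean sizes are controlled in terms of $B$ and $\deg C$. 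The product formula then forces $G_{\bar y}(x) = 0$ for every $K$-rational $x$ of height $\le B$ on $C$ reducing to $\bar y$. Combining the $G_{\bar y}$ across the $O(dq)$ smooth closed points of $\bar C$ yields a single auxiliary form $G$ vanishing on the entire set to be counted.

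Finally, B\'ezout applied to $C \cap \{G=0\}$ followed by an optimization in $q$ produces the exponent $2d_K/d$ for $B$ together with a polynomial factor in $d$. The main obstacle is the delicate bookkeeping in the Siegel step over $\mathcal{O}_K$: one must track the discriminant of $K$, the Minkowski constants at the archimedean places, and, in the function-field case, compensate for the complete absence of archimedean places with a less efficient pigeonhole on the constant field, which is precisely what produces the weakening from $d^4$ to $d^8$. Extracting the clean $d^4$ exponent, rather than a larger $d^C$, further requires the degree of each $G_{\bar y}$ to grow only linearly in $d$ and the smooth-point count on $\bar C$ to be bounded by $O(dq)$ uniformly in $d$; the Siegel-lemma calibration over $\mathcal{O}_K$, together with its function-field variant, is where I expect the bulk of the work to lie.
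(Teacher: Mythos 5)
This statement is quoted verbatim from \cite[Theorem 1.8]{PS22}; the paper gives no proof of it, only the citation, so there is no internal argument to compare yours against. That said, your outline does track the strategy actually used by Paredes and Sasyk: a globalization of the Heath-Brown--Salberger $p$-adic determinant method in the form given by \cite{CCDN19}, reducing to essentially primitive integral coordinates via the finitely many ideal classes, running the local determinant construction at a place of norm roughly $B^{d_K/d}$, and finishing with B\'ezout plus an optimization. Be aware that what you have written is a plan rather than a proof: the Siegel-lemma calibration over $\mathcal{O}_K$, the uniformity in $d$ of the auxiliary form's degree, and the function-field case (where the loss from $d^4$ to $d^8$ occurs) are precisely where the substance of \cite{PS22} lies, and your proposal only names these steps without carrying them out.
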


We will discuss this result in section $\S 6$, where we apply it.

\section{Construct Galois Covering with Level Structures}

In this section, we construct `nice' Galois coverings of $X(1) \times X(1)$ with level structures. To be precise, for a large enough rational prime $m$, these are quotients of $X(m) \times X(m)$ by certain kinds of subgroups of $SL_{2}(\Z/m\Z) \times SL_{2}(\Z/m\Z)$, with the property that one can lift a point $t \in S(B)$ to one of the quotients. In other words, the Galois coverings capture points that parametrize isogenous pairs  $(E_{t}, E^{\prime}_{t})$ in the family. The main result of this section is Lemma \ref{covers}. 

\vspace{1em} 

We assume that $m$ is a rational prime and let $X(m) \times X(m)$ be the surface parametrizing $4$-tuples $(E, E^{\prime},\phi,\phi^{\prime})$, where $E_{t}$ and $E^{\prime}_{t}$ are elliptic curves and $\phi$, $\phi^{\prime}$ are $m$-level structures, i.e. 

$$\phi: E[m] \to (\Z/m\Z)^{2}, \phi^{\prime}: E^{\prime}[m] \to (\Z /m\Z)^{2}$$
are isomorphisms of group schemes preserving the Weil pairing. Let $H$ be a subgroup of $SL_{2}(\Z/m\Z) \times SL_{2}(\Z/m\Z)$ and define $X_{H}$ to be the quotient 

\begin{equation}\label{XH}
	X_{H}:= (X(m) \times X(m))/H.
\end{equation}

\vspace{1em}
\noindent \textbf{Mod $p$ monodromy representations and the lifting criterion.} Let $E$ be an elliptic curve over a number field $K$ and let $p$ be a prime. Denote by 

$$\rho_{E,p}: G_{K} \to \Aut(E[p])$$
the mod $p$ Galois representation associated to the $p$-torsion points $E[p]$ of the elliptic curve $E$. It is a standard fact that the elliptic curve $E/ K$ admits an isogeny of degree $p$ defined over $K$ if and only if the image $\rho_{E, p}\left(G_{K}\right)$ is contained in a Borel subgroup of $\Aut(E[p])$. If $E/ K$ and $E^{\prime} / K$ are related by an isogeny over $K$ of degree coprime to $p$, then this isogeny induces a $G_{K}$-module isomorphism $E[p] \simeq E^{\prime}[p]$, which identifies the images $\rho_{E, p}\left(G_{K}\right)$ and $\rho_{E^{\prime}, p}(G_{K})$ up to change of basis. See \cite{RV00} for an explicit description of the images of mod $p$ Galois representations attached to the product of two isogenous elliptic curves with an isogeny of degree $p$. 

\begin{defn}\label{Hdelta}
	Define $H_{\Delta}$ to be the image of the diagonal map 
	
	$$\Delta: SL_{2}(\Z/m\Z) \to SL_{2}(\Z/m\Z) \times SL_{2}(\Z/m\Z).$$
\end{defn}

We will prove, in Proposition \ref{Qbar}, that there exists an isogeny $\phi_{t}: E_{t} \to E^{\prime}_{t}$ defined over $\overline{\Q}$ if and only if the monodromy of the mod $p$ Galois representation on $E_{t}[p] \times E^{\prime}_{t}[p]$ is contained in a group $\tilde{H}_{\Delta}$, which contains $H_{\Delta}$ as an index $2$ subgroup for all but finitely many $p$.  

 \vspace{1em}
\begin{remark}
  It is a classical result that for elliptic curves defined over a number field $K$, there are finitely many $j$-invariants with complex multiplication in $K$. Denote this number by $CM(K)$. We need to bound the number of points $P_t$ on $C$ whose $j$-invariant lies in this set. There are at most $CM(K)^{2}$ points $P_{t} \in C$ such that $\iota(P_t)=(j(E_t), j(E^{\prime}_t))$ has one or more coordinates being CM. Therefore in our setting, we can discard them and focus on pairs of elliptic curves without complex multiplication.
  \end{remark}

\begin{lem}\label{iso}
	Let $E_{1}$ and $E_{2}$ be two elliptic curves without complex multiplication over a number field $K$. If there exists an isogeny $\phi: E_{1} \to E_{2}$ defined over $K$, then the mod $p$ Galois representation of $E_{1} \times E_{2}$
	$$\Gal(\bar{K}/K) \to GL_{2}(\F_{p}) \times GL_{2}(\F_{p})$$ 
	has image conjugate to $H_{\Delta}$ for primes $p$ not dividing the degree of $\phi$. Conversely, if the mod $p$ Galois representation of $E_{1} \times E_{2}$ has image conjugate to $H_{\Delta}$ for infinitely many primes $p$, then $E_1$ is isogenous to $E_2$ over $K.$
	
\end{lem}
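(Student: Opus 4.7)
The approach is to exploit the fact that an isogeny $\phi\colon E_1 \to E_2$ of degree coprime to $p$ restricts to a $G_K$-equivariant isomorphism on $p$-torsion subgroups. First, I would observe that since $|\ker\phi| = \deg\phi$ is coprime to $p$, the intersection $\ker\phi \cap E_1[p]$ is trivial, so the restriction $\phi|_{E_1[p]}\colon E_1[p] \to E_2[p]$ is injective. Both sides being two-dimensional $\F_p$-vector spaces, $\phi|_{E_1[p]}$ is an isomorphism of $\F_p$-vector spaces. Because $\phi$ is defined over $K$, this isomorphism commutes with the $G_K$-action on $p$-torsion.

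Next, I would pick any $\F_p$-basis of $E_1[p]$ and transport it via $\phi|_{E_1[p]}$ to a basis of $E_2[p]$. In these compatible coordinates one has $\rho_{E_1,p}(g) = \rho_{E_2,p}(g)$ for every $g \in G_K$, and so the combined representation
$$G_K \to GL_2(\F_p) \times GL_2(\F_p),\qquad g \mapsto (\rho_{E_1,p}(g), \rho_{E_2,p}(g))$$
factors through the diagonal subgroup $\Delta(GL_2(\F_p))$. Any other choice of bases of $E_1[p]$ and $E_2[p]$ replaces this image by a $(GL_2(\F_p) \times GL_2(\F_p))$-conjugate of it, which gives the ``conjugate to'' shape of the statement.

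To match this conclusion precisely to $H_\Delta$ as introduced in Definition \ref{Hdelta}, I would invoke Serre's open image theorem: since $E_1$ has no complex multiplication, for all but finitely many primes $p$ (in particular once $p \ge M_1$) the image $\rho_{E_1,p}(G_K)$ contains $SL_2(\F_p)$. Combined with the diagonal factorization above and the fact that, in the modular setting of $X(m) \times X(m)$, the Weil-pairing-preserving level structures cut out $SL_2$-part of the representation (the determinant being the mod-$p$ cyclotomic character, a cocycle that is the same on both factors), this upgrades ``contained in $\Delta(GL_2(\F_p))$'' to ``conjugate to $H_\Delta$.''

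The main delicate point, and what I would spell out carefully, is the transition from the $GL_2(\F_p) \times GL_2(\F_p)$-valued Galois representation to the $SL_2(\F_p) \times SL_2(\F_p)$-valued definition of $H_\Delta$. Once the diagonal factorization is established via $\phi$, the non-CM hypothesis together with Serre's theorem guarantees the image is as large as possible within the diagonal, so the identification with $H_\Delta$ is canonical modulo the $\{\pm 1\}$ center that is absorbed into the definition of $X(m)$.
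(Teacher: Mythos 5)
Your proposal is correct and takes essentially the same route as the paper: the isogeny of degree coprime to $p$ restricts to a $G_K$-equivariant isomorphism $E_1[p] \to E_2[p]$, forcing the image into a conjugate of the diagonal, and Serre's open image theorem (applicable because $E_1, E_2$ are non-CM) makes that image full for $p$ large enough. You spell out the $GL_2$ versus $SL_2$ bookkeeping more explicitly than the paper does, but the underlying argument is the same.
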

 \begin{proof}
 
 First, by Serre's open image theorem, the mod $p$ Galois representation of each factor
 $$\Gal(\bar{K}/K) \to GL_{2}(\F_{p})$$
 is surjective for all large enough primes $p$.
  
 Suppose $E_{1}/K$ and $E_{2} / K$ are related by an isogeny over $K$ of degree $d$, then for all primes $p \nmid d$ this isogeny induces a $G_{K}$-module isomorphism from $E_{1}[p]$ to $E_{2}[p]$, which identifies the images $\rho_{E_{1}, p}\left(G_{K}\right)$ and $\rho_{E_{2}, p}(G_{K})$. 
 
 Now suppose that for infinitely many primes $p$ the $\bmod$ $p$ Galois representation of $E_1 \times E_2$ has diagonal image, i.e. $E_1[p] \simeq E_2[p]$ as $G_K$ modules. Hence for any $\ell$ there are infinitely many $p$ such that $a_{\ell}\left(E_1\right) \equiv a_{\ell}\left(E_2\right) \text{ } \bmod $ $p$. Then we have $a_{\ell}\left(E_1\right)=a_{\ell}\left(E_2\right)$. By Faltings' theorem on isogenies, two elliptic curves over $K$ are $K$-isogenous if and only if their traces of $\bmod$ $\ell$ Frobenius agrees at all primes $\ell$ with good reduction. The lemma follows.   

\end{proof}
 
 Note that Lemma \ref{iso} is a result over number fields and will serve as a main ingredient in the proof of Proposition \ref{Qbar}. However, later in the proof of Proposition \ref{genericE}, we need to use a similar result over the function field $K(t)$ which classifies elliptic curves up to isogeny by their mod $p$ Galois representations. Here we present a beautiful theorem by Bakker and Tsimerman \cite[Theorem 1]{BT}.
 \begin{thm}\label{Bakker and Tsimerman}
 	
 Let $k$ be an algebraically closed field of characteristic $0$. For any $N > 0$, there exists $M_N > 0$ such that for any prime $p > M_N$ and any smooth quasi-projective curve $U$ of gonality $n<N$, non-isotrivial elliptic curves $\mathcal{E}$ over $U$ are classified up to isogeny by their $p$-torsion local system $\mathcal{E}[p]$.
 \end{thm}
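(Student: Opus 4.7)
The plan is to translate the hypothesis of isomorphic $p$-torsion local systems into the existence of a morphism from $U$ into a twisted modular diagonal quotient surface, and then rule out such morphisms from low-gonality curves via a gonality-growth bound on these surfaces.

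Concretely, suppose $\mathcal{E}_{1}, \mathcal{E}_{2}$ are non-isotrivial elliptic curves over $U$ and $\phi \colon \mathcal{E}_{1}[p] \xrightarrow{\sim} \mathcal{E}_{2}[p]$ is an isomorphism of \'etale local systems. Such a $\phi$ scales the Weil pairing by some $\epsilon \in (\Z/p\Z)^{\times}$, and the triple $(\mathcal{E}_{1}, \mathcal{E}_{2}, \phi)$ determines a morphism
\[
f_{\phi} \colon U \longrightarrow X_{\epsilon} = \Delta_{\epsilon} \backslash (X(p) \times X(p))
\]
working directly on $U$ via the modular interpretation of $X_{\epsilon}$ as the coarse moduli of pairs with level-$p$ isomorphism. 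The image $C_{U} \subset X_{\epsilon}$ is irreducible, non-isotriviality of the $\mathcal{E}_{i}$ forces $C_{U}$ to project dominantly onto each $X(1)$-factor, and $\mathrm{gon}(C_{U}) \leq \mathrm{gon}(U) < N$. The theorem reduces to showing that for $p > M_{N}$, any such $C_{U} \subset X_{\epsilon}$ must be contained in the image of a Hecke correspondence $Y_{0}(n) \subset X(1) \times X(1)$, which is exactly the statement that $\mathcal{E}_{1}$ and $\mathcal{E}_{2}$ are isogenous.

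The heart of the argument is therefore a lower bound $\mathrm{gon}(C) \gtrsim p$ for any irreducible curve $C \subset X(p) \times X(p)$ projecting dominantly to both factors and not contained in a Hecke correspondence, together with descent through the finite quotient $\Delta_{\epsilon} \backslash (\cdot)$, which costs only an absolute constant factor. Abramovich's theorem provides $\mathrm{gon}(X(p)) \gtrsim p$, but applying this through one projection only yields $\mathrm{gon}(C) \geq \mathrm{gon}(X(p))/\deg(C \to X(p))$, which degrades as the degree grows. The fix is Hodge-theoretic: the Hodge line bundle of the universal elliptic curve pulls back to a sub-line-bundle $\mathcal{L} \hookrightarrow \Omega^{1}_{C}$ whose slope grows linearly in $p$, and an Arakelov-type slope inequality for non-isotrivial families then converts this positivity into a linear-in-$p$ gonality bound \emph{independent} of the degree of $C$. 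Choosing $M_{N}$ so that this bound exceeds $N$ contradicts $\mathrm{gon}(C_{U}) < N$.

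The main obstacle is precisely this degree-independent Arakelov inequality on curves in $X(p) \times X(p)$ away from Hecke correspondences; this is where genuine input from the geometry of the Hodge bundle, rather than a purely combinatorial covering estimate, is indispensable. A secondary subtlety is handling the twists $\epsilon \in (\Z/p\Z)^{\times}$ uniformly in $p$: all $X_{\epsilon}$ are Galois-conjugate quotients of the same cover $X(p) \times X(p) \to X(1) \times X(1)$, so the gonality estimate is insensitive to $\epsilon$ and $M_{N}$ can be chosen independently of which twist occurs. Finally, the characteristic-zero hypothesis on $k$ is used to invoke the Hodge-theoretic input via comparison with the complex-analytic uniformization $\mathbb{H} \to Y(p)(\C)$, after which a standard specialization argument extends the conclusion from $\C$ to any algebraically closed $k$ of characteristic $0$.
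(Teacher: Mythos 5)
The paper does not prove this statement; it is quoted verbatim and attributed to Bakker--Tsimerman as \cite[Theorem~1]{BT}, so there is no ``paper's own proof'' to compare against. You should treat the theorem as an imported black box in this article.

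On the merits of your sketch as a reconstruction of the external argument: the reduction step is sound (the local-system isomorphism scales the Weil pairing by some $\epsilon$, yields a classifying map $U \to X_\epsilon$, and $\mathrm{gon}(C_U)\le \mathrm{gon}(U)$ by the standard trace/norm lemma), and you correctly identify that the real content is a degree-independent gonality lower bound for curves in these surfaces that avoid the Hecke divisors. However, the way you propose to obtain it contains a genuine gap. You assert that the descent from $X(p)\times X(p)$ to $X_\epsilon=\Delta_\epsilon\backslash(X(p)\times X(p))$ ``costs only an absolute constant factor,'' but the quotient map has degree $|SL_2(\mathbb{Z}/p\mathbb{Z})|\sim p^3$, which is far from constant. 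Since the only inequality available in the useful direction is $\mathrm{gon}(C_U)\ge \mathrm{gon}(\widetilde C)/\deg(\widetilde C\to C_U)$, a lower bound $\mathrm{gon}(\widetilde C)\gtrsim p$ on the preimage $\widetilde C\subset X(p)\times X(p)$ yields nothing after dividing by $\sim p^3$. Your intermediate remark about Abramovich degrading through a projection is also off: by the same trace lemma, $\mathrm{gon}(\widetilde C)\ge \mathrm{gon}(X(p))$ with no loss from the projection degree, and in fact Abramovich's bound gives $\mathrm{gon}(X(p))\gtrsim p^3$ (linear in the index $[\Gamma(1):\Gamma(p)]$, not in $p$). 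The point is that even this strong bound on $\widetilde C$ is exactly eaten by the $\sim p^3$ quotient degree, so the naive projection-plus-descent route produces no contradiction whatsoever. What is actually needed --- and what Bakker--Tsimerman supply --- is a gonality (or hyperbolicity) estimate formulated directly on curves in the quotient surface $X_\epsilon$ away from Hecke correspondences, or equivalently a bound on $\widetilde C$ that grows strictly faster than $p^3$; this is the nontrivial technical heart of \cite{BT}, and a one-paragraph Arakelov-heuristic does not deliver it.
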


The following lemma, together with the fact that if $F$ is a field with more than $5$ elements then the only proper normal subgroup of $SL_{2}(F)$ is the group $\{\pm 1\}$, proves that $\tilde{H}_{\Delta}$ is the unique proper subgroup of $SL_{2}(\Z/m\Z) \times SL_{2}(\Z/m\Z)$ that contains $H_{\Delta}$ as an index $2$ subgroup.

\begin{lem}
Let $A$ be a group and let $G=A \times A$. Define $\Delta=\{(a,a) \mid a \in A\}$ as the diagonal subgroup of $G$. If $\Delta \le H \le G$ then there exists a normal subgroup $N$ of $A$ such that $H=\{(g,h) \in G \mid gh^{-1} \in N \}$.	
\end{lem}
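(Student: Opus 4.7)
The plan is to guess the subgroup $N$ explicitly from the desired conclusion, verify normality, and then check the set-theoretic equality in both directions. Since the prescribed formula $H=\{(g,h)\in G : gh^{-1}\in N\}$ forces $(a,1_A)\in H \iff a\in N$, the natural definition is
$$N := \{a \in A \mid (a,1_A) \in H\},$$
or, equivalently, the image of $H$ under the map $(g,h)\mapsto gh^{-1}$. The key structural observation powering everything is that $\Delta$ acts transitively on each fiber of $(g,h)\mapsto gh^{-1}$, so a subgroup $H$ containing $\Delta$ is completely determined by which fibers it meets.

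First I would check that $N$ is a subgroup of $A$: closure under multiplication and inversion are immediate from the corresponding properties of $H$, since if $(a,1),(b,1)\in H$ then $(ab,1)$ and $(a^{-1},1)$ are in $H$. Normality is where the hypothesis $\Delta\le H$ gets used. For any $a\in N$ and $c\in A$, the element $(c,c)$ lies in $\Delta\subseteq H$, hence
$$(c,c)(a,1_A)(c,c)^{-1} = (cac^{-1},1_A) \in H,$$
showing $cac^{-1}\in N$. Thus $N\trianglelefteq A$.

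For the equality $H=\{(g,h):gh^{-1}\in N\}$, I would argue both containments using diagonal translations. If $(g,h)\in H$, then multiplying on the right by $(h^{-1},h^{-1})\in\Delta\subseteq H$ yields $(gh^{-1},1_A)\in H$, so $gh^{-1}\in N$. Conversely, if $gh^{-1}\in N$ then $(gh^{-1},1_A)\in H$, and multiplying on the right by $(h,h)\in\Delta\subseteq H$ gives $(g,h)\in H$.

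The argument is entirely elementary, so I do not expect any genuine obstacle; the only thing one has to see is that the diagonal must be invoked in both the normality step and the final equality step — without $\Delta\le H$, the slice $N$ need not be normal, nor would it determine $H$. Once $N$ is defined correctly, the verification is a two-line conjugation and two one-line coset computations.
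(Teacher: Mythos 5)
Your proof is correct and follows essentially the same route as the paper's: you define $N$ as the slice $\{a : (a,1)\in H\}$, use conjugation by diagonal elements for normality, and use diagonal translation to pass between $(g,h)$ and $(gh^{-1},1)$. The only difference is that you spell out the subgroup check and both containments a bit more explicitly, which is harmless.
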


\begin{proof}
	Let $N=\{h \in A \mid (h,1) \in H \}$ be a subgroup of $G$. We claim that $N$ is the desired normal subgroup. Indeed, for any $a \in A$ and $(h,1) \in H$, we have $(aha^{-1},1)=(a,a)(h,1)(a^{-1}, a^{-1}) \in H$, therefore $N$ is a normal subgroup of $A$.
	
	For any $a, a^{\prime} \in A$, we have $(aa^{\prime -1},1)(a^{\prime}, a^{\prime})=(a, a^{\prime})$. Therefore $(a, a^{\prime}) \in H$ if and only if $(aa^{\prime -1},1) \in H$, if and only if $aa'^{-1}\in N$ by the definition of $N$.
\end{proof}

We will need the following defintion:
\begin{defn}
Let $\tilde{H}_{\Delta}$ be the maximal subgroup of $SL_{2}(\Z/m\Z) \times SL_{2}(\Z/m\Z)$ that contains $H_{\Delta}$ as an index $2$ subgroup
$$\tilde{H}_\Delta=\{(g,\pm g)\colon g\in SL_2(\Z/m\Z)\}.$$
\end{defn}
\begin{prop}\label{Qbar}
	Let $E_{1}$ and $E_{2}$ be elliptic curves without complex multiplication over $\Q$. There exists an isogeny $\phi: E_{1} \to E_{2}$ defined over $\overline{\Q}$ if and only if the mod $p$ Galois representation 
	$$\Gal(\overline{\Q}/\Q) \to GL_{2}(\F_{p}) \times GL_{2}(\F_{p})$$ has image contains $\tilde{H}_{\Delta}$, for all primes $p$ not dividing the degree of $\phi$. 
\end{prop}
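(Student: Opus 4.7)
The plan is to prove both directions by analyzing the Galois action on the group $\mathrm{Hom}_{\overline{\mathbb{Q}}}(E_1, E_2)$, which for non-CM elliptic curves with an isogeny is a free $\mathbb{Z}$-module of rank one.

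For the forward direction, I would start from an isogeny $\phi: E_1 \to E_2$ of degree $D$ defined over $\overline{\mathbb{Q}}$. For every $\sigma \in G_\mathbb{Q}$, the Galois conjugate $\sigma \phi$ is again an isogeny $E_1 \to E_2$ of degree $D$. Rank-one-ness of $\mathrm{Hom}(E_1,E_2)$ then forces $\sigma\phi = \epsilon_\sigma \phi$ for some integer $\epsilon_\sigma$, and matching degrees gives $\epsilon_\sigma \in \{\pm 1\}$. The assignment $\sigma \mapsto \epsilon_\sigma$ is a quadratic character $\chi: G_\mathbb{Q} \to \{\pm 1\}$ whose fixed field $L$ has degree at most $2$ over $\mathbb{Q}$. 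Over $L$ the isogeny $\phi$ is $G_L$-equivariant, so Lemma \ref{iso} applies and, after choosing bases of $E_1[p]$ and $E_2[p]$ compatibly under $\phi$, places the image of $G_L$ in $H_\Delta$ for every $p \nmid D$. For $\sigma \in G_\mathbb{Q} \setminus G_L$, unwinding $\sigma\phi = -\phi$ on $p$-torsion yields $\rho_{E_2,p}(\sigma) = -\phi\,\rho_{E_1,p}(\sigma)\,\phi^{-1}$, so the image lies in the non-trivial coset of $H_\Delta$ generated by the element $(I,-I)$. By the uniqueness statement established immediately before the proposition, the subgroup of $SL_2 \times SL_2$ generated by $H_\Delta$ together with this coset must equal $\tilde{H}_\Delta$, giving image $\subseteq \tilde{H}_\Delta$.

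For the reverse direction, I would reverse the bookkeeping. The quotient map $\tilde{H}_\Delta \twoheadrightarrow \tilde{H}_\Delta/H_\Delta \cong \mathbb{Z}/2\mathbb{Z}$ composed with the Galois representation defines a quadratic (possibly trivial) character of $G_\mathbb{Q}$ and cuts out an at-most-quadratic extension $L$. Over $G_L$ the image lands in $H_\Delta$, which (after the basis identification) means that the representations $\rho_{E_1,p}$ and $\rho_{E_2,p}$ are isomorphic as $G_L$-modules. To promote this identification (holding, by hypothesis, for all but finitely many $p$) into an actual $\overline{\mathbb{Q}}$-isogeny $E_1 \to E_2$, I would invoke Faltings' isogeny theorem for elliptic curves over the number field $L$: compatibility of the $p$-torsion Galois modules for infinitely many $p$ forces compatibility of Tate modules after extension of scalars to $\mathbb{Q}_\ell$ for each $\ell$, and therefore produces an isogeny over $L$, hence a fortiori over $\overline{\mathbb{Q}}$.

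The main obstacle is the reverse direction: passing from a mod-$p$ Galois isomorphism to a genuine isogeny is the delicate step, and the proof genuinely needs the hypothesis to hold for enough primes simultaneously (not a single $p$), so that a compatible-family input like Faltings can be applied. The forward direction is the one actually used to detect isogenous specializations in the main counting argument, and it is essentially a clean bookkeeping of the $\pm 1$ character together with Lemma \ref{iso}.
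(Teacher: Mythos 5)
Your argument is correct and follows essentially the same route as the paper: your quadratic-character argument on $\Hom_{\overline{\Q}}(E_1,E_2)$ is precisely the paper's proof of Lemma~\ref{twist}, after which Lemma~\ref{iso} is applied over the (at most quadratic) extension cut out by the character, and your explicit coset computation $(A,-A)\in H_\Delta\cdot(I,-I)$ plus the uniqueness of the index-two overgroup matches the paper's appeal to the lemma preceding Proposition~\ref{Qbar}; for the converse the paper cites a (mislabeled) ``Proposition 2.19,'' which is exactly the Faltings-type isogeny input you invoke. The one point worth tightening is that Faltings compares $\ell$-adic Tate modules, so to pass from mod-$p$ Galois isomorphisms for all large $p$ you should either first deduce $a_q(E_1)=a_q(E_2)$ for all good primes $q$ from the congruences together with the Hasse bound and then apply Faltings, or cite a mod-$p$ isogeny theorem directly.
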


\begin{proof}
We need the following lemma:
	 \begin{lem}\label{twist}
Let $E_{1}$, $E_{2}$ be elliptic curves without complex multiplication defined over a number field $K$. If $E_{1}$ and $E_{2}$ are isogenous over $\overline{\Q}$ then there exists a quadratic twist of $E_{2}$ that is isogenous to $E_{1}$ over $K$.	
\end{lem}
Suppose there exists an isogeny $\varphi: E_{1} \to E_{2}$ over $\overline{\Q}$ then by Lemma \ref{twist} and Lemma \ref{iso}, there is a quadratic extension $L/K$ such that $G_{L}$ has diagonal image in $GL_{2}(\F_{p}) \times GL_{2}(\F_{p})$. Therefore the image of $G_{K}$ is contained in a subgroup of $GL_{2}(\F_{p}) \times GL_{2}(\F_{p})$ which contains $H_{\Delta}$ as an index $2$ subgroup.
For the other direction, we take the preimage of the $H_{\Delta}$, which can be written in the form $G_{F}$ for some number field $F$, that is quadratic over $K$. By Lemma \ref{iso}, there is an isogeny $\varphi: E_{1} \to E_{2}$ defined over $F$ which completes the proof. 

\end{proof}

  \begin{proof}[Proof of Lemma \ref{twist}]
  Let $\varphi: E_{1} \to E_{2}$ be an isogeny over $\overline{\Q}$ and $G_{\overline{\Q}/K}=\Gal(\overline{\Q}/K)$. For every $g\in G_{\overline{\Q}/K}$, $\varphi^{g}=[\alpha(g)] \circ \varphi $ is another isogeny $E_{1} \to E_{2}$ of the same degree as $\varphi$. Here we define $\alpha: G_{\overline{\Q}/K} \to \mathbb{R}$ be a character on $G_{\overline{\Q}/K}$. Since for all elliptic curves without complex multiplication over $\overline{\Q}$, there exists a cyclic isogeny $\varphi: E_{1} \to E_{2}$ up to sign and all other isogenies $\psi$ from $E_{1}$ to $E_{2}$ can be written as $\psi=\varphi \circ [m]$ for some integer $m$, we have $\alpha(g)= \pm 1$. Hence $\alpha(g)$ is a quadratic character and there exists $d \in K^{*}$ such that $\alpha(g)=g(\sqrt{d})/\sqrt{d}$. Thus the quadratic twist of $E_{2}$ by $d$ is the desired twist. 
 
  \end{proof}

\vspace{1em}

\begin{defn}\label{Hprod}
  Let $H_{\text{p}}:=H_{1} \times H_{2}$, where $H_{1}$ and $H_{2}$ (possibly $H_{1}=H_{2}$) are maximal parabolic subgroups of $SL_{2}(\Z/m\Z)$.
\end{defn}

\begin{remark}

All maximal parabolic subgroups of $SL_{2}(\Z/m\Z)$ are in the same conjugacy class. Therefore the covers constructed in this way are all isomorphic to each other. If $t \in S(B)$ lifts to one, it lifts to all.

\end{remark}

\begin{lem}\label{covers}
	
	Any rational point $t \in S(B)$ for some $B$, where the specializations $E_t$ and $E^{\prime}_t$ above $t$ are non-CM, admits a lifting to one of the congruence covers: $X_{\tilde{H}_{\Delta}}(K)$ or $X_{H_{\text{p}}}(K)$.
\end{lem}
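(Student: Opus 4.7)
The plan is to split into cases according to whether the fixed prime $m$ divides the degree of a minimal $\overline{\Q}$-isogeny $\phi_{\min}\colon E_t \to E'_t$. By the remark preceding the lemma we may discard the finitely many CM specializations and assume both $E_t$ and $E'_t$ are non-CM, so that $\Hom_{\overline{K}}(E_t, E'_t) \cong \Z \cdot \phi_{\min}$ is free of rank $1$; set $n := \deg \phi_{\min}$. The goal is to show that the mod-$m$ Galois representation $\rho_t \colon G_K \to SL_2(\Z/m\Z) \times SL_2(\Z/m\Z)$ on $(E_t[m], E'_t[m])$ has image contained, up to conjugation, in $\tilde{H}_{\Delta}$ when $m \nmid n$, and in some $H_{\text{p}}$ when $m \mid n$. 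Since $t$ admits a $K$-rational lift through the Galois cover $X_H \to X(1) \times X(1)$ iff the Galois action on the fiber above $\iota(t)$ fixes some coset of $H$---equivalently, $\Img(\rho_t)$ lies in a conjugate of $H$---each case will directly produce the desired lift.

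When $m \nmid n$, the conclusion is immediate from Proposition \ref{Qbar} applied to $E_t$ and $E'_t$ over $K$: the existence of an $\overline{\Q}$-isogeny of degree coprime to $m$ forces $\Img(\rho_t) \subseteq \tilde{H}_{\Delta}$, giving a lift to $X_{\tilde{H}_{\Delta}}(K)$.

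When $m \mid n$, the first step is to show that $\ker \phi_{\min}$ descends to $K$. For any $\sigma \in G_K$, the conjugate $\phi_{\min}^{\sigma}$ is another isogeny $E_t \to E'_t$ of degree $n$; by the non-CM hypothesis, the isogenies of minimal degree are precisely $\pm \phi_{\min}$, so $\phi_{\min}^{\sigma} = \pm \phi_{\min}$ and $\ker \phi_{\min}$ is $G_K$-stable. Then $K_m := \ker \phi_{\min} \cap E_t[m]$ is a $G_K$-stable subgroup of order $m$ or $m^2$ in $E_t[m]$. The order-$m^2$ option is excluded by minimality of $n$: it would force $E_t[m] \subseteq \ker \phi_{\min}$, so that $\phi_{\min}$ factors through $[m] \colon E_t \to E_t$, producing an isogeny of degree $n/m^2 < n$. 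Hence $|K_m| = m$, giving a $K$-rational $m$-isogeny out of $E_t$, so $\Img(\rho_{E_t, m}) \subseteq B_1$ for some Borel $B_1 \subset SL_2(\Z/m\Z)$. Repeating on the other side---either via the dual isogeny $\hat{\phi}_{\min}$, or by observing that $\phi_{\min}(E_t[m]) \subset E'_t[m]$ is a $G_K$-stable subgroup of order $m$---gives $\Img(\rho_{E'_t, m}) \subseteq B_2$. Combining, $\Img(\rho_t) \subseteq B_1 \times B_2 = H_{\text{p}}$, and by the remark following Definition \ref{Hprod} this yields a lift to $X_{H_{\text{p}}}(K)$.

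The main technical obstacle is establishing the Galois-stability of $\ker \phi_{\min}$ in the second case, which depends squarely on the non-CM hypothesis and the rank-$1$ structure of $\Hom_{\overline{K}}(E_t, E'_t)$; once that is in hand, the classical correspondence between Borel containment of mod-$m$ Galois images and $K$-rational $m$-isogenies closes out both factors. A minor bookkeeping subtlety is the $\{\pm 1\}$-discrepancy between the $GL_2$-valued Galois representations attached to the curves and the $SL_2/\{\pm 1\}$-valued action on the cover $X(m)$, but this is absorbed by the Weil-pairing compatibility imposed on the level structures and by the fact that the subgroups $\tilde{H}_{\Delta}$ and $H_{\text{p}}$ each contain $\{\pm 1\} \times \{\pm 1\}$.
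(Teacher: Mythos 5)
Your proof is correct and follows the same two-case split the paper uses (whether $m$ divides the degree of the isogeny), but you fill a real gap in the paper's own argument for the second case. The paper dismisses this case with one line (``when $d=m$, we have the Galois image... contained in a Borel subgroup''), without either treating general $m \mid n$ or explaining why a $\overline{\Q}$-isogeny of degree divisible by $m$ forces Borel-containment over $K$. Your descent argument does this carefully: from the non-CM hypothesis you get $\Hom_{\overline{K}}(E_t, E'_t) \cong \Z\cdot\phi_{\min}$, hence $\phi_{\min}^\sigma = \pm\phi_{\min}$ and $G_K$-stability of $\ker\phi_{\min}$; then the analysis of $K_m := \ker\phi_{\min}\cap E_t[m]$ (ruling out order $m^2$ by minimality) produces the $K$-rational cyclic $m$-isogeny needed for Borel containment on the first factor, with the dual isogeny handling the second. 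Your handling of case 1 via Proposition \ref{Qbar} matches the paper's. The observation about the $\{\pm 1\}$-discrepancy between the $GL_2$-valued Galois representation and the $SL_2/\{\pm 1\}$ cover group is a fair caveat that the paper also glosses over; the resolution you give (Weil-pairing compatibility absorbing it) is the right one. In short: same route, but your write-up is more rigorous than the paper's.
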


\begin{proof}
	By the argument above, when there is a $K$-isogeny of degree $m \nmid d$, it induces a $G_{K}$-isomorphism $E_{t}[m] \simeq E^{\prime}_{t}[m]$ which implies an isomorphism of the mod $m$ Galois images, up to a change of basis given by conjugating by an element $(1,g) \in SL_{2}(\Z/m\Z)$. Applying Proposition \ref{Qbar}, we get the conclusion that if $m$ does not divide the degree of the isogeny, the Galois image lies in $\tilde{H}_{\Delta}$. 	
	As for the rest of the lemma, suppose there is an isogeny $\phi: E_t \to E^{\prime}_t$n with $\deg \phi =m$. For any Galois element $g$, we have $\phi^{g}=[\pm 1]\circ \phi$ and the kernel of $\phi$ is rational. Then we have a $K$-rational subgroup of order $m$ in both $E_t[m]$ and $E^{\prime}_t[m]$. Therefore the Galois image of the $m$-torsion monodromy representation of both $E_{1}$ and $E_{2}$ contained in a Borel subgroup of $SL_{2}(\Z/m\Z)$. Since any Borel subgroup is maximal parabolic in $SL_{2}(\Z/m\Z)$, this proves the lemma \ref{covers}.
\end{proof}

\section{Geometric Interpretation and Projective Embeddings}

In this section, we explore the geometric interpretation of the covers in Lemma \ref{covers} and construct projective embeddings, which allow us to transform the problem of counting rational points (with bounded height) on $C$ into counting rational points (with bounded height) in projective spaces. 

\begin{defn}
Let $H$ be a subgroup of $SL_{2}(\Z/m\Z) \times SL_{2}(\Z/m\Z)$ such that $H$ is either $\tilde{H}_{\Delta}$ or $H_{\operatorname{p}}$. Define $C_{H}$ to be the lifting of $C$ to the modular surface $X_{H}:= X(m) \times X(m)/H$, which is given by the pullback of the following diagram.

$$\begin{tikzcd}\label{dia1}
C_{H} \arrow[r] \arrow[d] &  X_{H}  \arrow[d]  \\
C \arrow[r] & X(1) \times X(1)  	
\end{tikzcd}$$

\end{defn}
\subsection{The lift curve is integral}
In order to apply the result by Paredes and Sasyk, we need to prove that the lifting $C_{H}$ of $C$ is integral for large enough $m$. We need the following Goursat's lemma:
	  
	   \begin{lem}\cite[Theorem 5.5.1]{Hall}\label{Hall}
	   Let $G$ and $H$ be groups, and let $K$ be a subdirect product of $G$ and $H$; that is, $K \leq G \times H$, and $\pi_G(K)=G, \pi_H(K)=H$, where $\pi_G$ and $\pi_H$ are the projections onto the first and second factor, respectively from $G \times H$. Let $N_1=K \cap \operatorname{ker}\left(\pi_G\right)$ and $N_2=K \cap \operatorname{ker}\left(\pi_H\right)$. Then $N_2$ can be identified with a normal subgroup $N_G$ of $G, N_1$ can be identified with a normal subgroup $N_H$ of $H$, and the image of $K$ in $G / N_G \times H / N_H$ is the graph of an isomorphism $G / N_G \cong H / N_H$.
\end{lem}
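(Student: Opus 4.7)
The approach is to build the isomorphism $G/N_G \cong H/N_H$ explicitly from the subdirect structure of $K$. Concretely, I would set $N_G := \{g \in G : (g, e_H) \in K\}$, which is canonically identified with $N_2 = K \cap \ker(\pi_H)$, and similarly $N_H := \{h \in H : (e_G, h) \in K\}$, identified with $N_1 = K \cap \ker(\pi_G)$. Both identifications are injective because $K \cap \ker(\pi_H)$ already has trivial second coordinate, so the projection to $G$ is one-to-one on it, and symmetrically for $H$.

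The first step is verifying normality of $N_G$ in $G$ (and symmetrically $N_H$ in $H$), and this is the place where the subdirect hypothesis $\pi_G(K) = G$ enters crucially. Given any $g \in G$, pick $h \in H$ with $(g,h) \in K$; then for $n \in N_G$, conjugation yields $(g,h)(n,e_H)(g,h)^{-1} = (gng^{-1}, e_H) \in K$, showing $gng^{-1} \in N_G$.

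The main content is the construction of $\phi: G/N_G \to H/N_H$. For $g \in G$, use $\pi_G(K)=G$ to choose some $h$ with $(g,h) \in K$ and set $\phi(gN_G) := h N_H$. Well-definedness is the subtle step: if $(g_1, h_1), (g_2, h_2) \in K$ with $g_1 N_G = g_2 N_G$, then $(g_1 g_2^{-1}, h_1 h_2^{-1}) \in K$ and $g_1 g_2^{-1} \in N_G$, so $(g_1 g_2^{-1}, e_H) \in K$; dividing gives $(e_G, h_1 h_2^{-1}) \in K$, hence $h_1 h_2^{-1} \in N_H$. The homomorphism property follows from closure of $K$ under multiplication, surjectivity from $\pi_H(K) = H$, and injectivity by running the well-definedness computation in reverse.

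Finally, unwinding the definitions, the image of $K$ inside $G/N_G \times H/N_H$ is exactly $\{(gN_G, \phi(gN_G)) : g \in G\}$, i.e.\ the graph of $\phi$, which is the last assertion of the lemma. There is no serious obstacle here beyond careful bookkeeping with coset representatives; the only conceptually delicate point is that the subdirect hypothesis is used twice, once to push conjugation by arbitrary $g \in G$ into $K$ for the normality argument and once to ensure that $\phi$ is defined on all of $G/N_G$, and both uses fail without it.
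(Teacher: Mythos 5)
Your proof is correct and is essentially the standard proof of Goursat's lemma; the paper does not give its own argument here, it simply cites \cite[Theorem~5.5.1]{Hall}, so your write-up fills in exactly the reasoning the citation is standing in for. The key steps — using subdirectness to get normality of $N_G$ and $N_H$, the well-definedness computation for $\phi$, and the observation that injectivity is the same computation run in reverse — are all handled correctly.
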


Also, we prove the following proposition:

 \begin{prop}\label{choice of m}
	For all $m \ge 17$, the Galois image of the $m$-torsion monodromy representation
$$G_{\overline{K}(t)}\rightarrow \Aut(E_\eta[m])$$
    of $E_{\eta}[m]$ and $E^{\prime}_{\eta}[m]$ is $SL_{2}(\Z/m\Z)$.
\end{prop}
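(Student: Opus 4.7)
The plan is to argue by contradiction, combining Dickson's classification of subgroups of $SL_2(\mathbb{F}_m)$, non-isotriviality of the family, and the positive genus of $X_0(m)$ for primes $m \ge 17$. Let $H \subseteq SL_2(\mathbb{Z}/m\mathbb{Z})$ denote the image of the geometric mod $m$ monodromy representation of $E_t$ over $K(t)$, and suppose for contradiction that $H$ is a proper subgroup.

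The first step is to produce an element of exact order $m$ inside $H$. Since $\iota$ is a finite map, the composition $C \to X(1) \times X(1) \to X(1)$ (either projection) is non-constant, so $j(E_t)$ is a non-constant rational function on $C \cong \mathbb{P}^1$, attaining $\infty$ at some point $t_0 \in C(\overline{K})$. At $t_0$ the curve $E_t$ has (potentially) multiplicative reduction of Kodaira type $I_n$ for some $n \ge 1$, and in suitable coordinates the local inertia acts on the mod-$m$ Tate module as $\bigl(\begin{smallmatrix}1 & n \\ 0 & 1\end{smallmatrix}\bigr)$. Choosing a bad fiber whose parameter $n$ is coprime to $m$ — or passing to a finite cover of $C$, which remains rational — gives the desired unipotent element of order exactly $m$ in $H$.

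Next I would invoke Dickson's classification: for $m$ a prime, every proper subgroup of $SL_2(\mathbb{F}_m)$ is contained in a Borel (of order $m(m-1)$), in the normalizer of a split or non-split Cartan (of order dividing $2(m-1)$ or $2(m+1)$), or in an exceptional lift of $A_4$, $S_4$, or $A_5$ (of order at most $120$). Of these, only Borel subgroups have order divisible by $m$, so the only proper subgroups that can contain an element of order $m$ are Borels; the bound $m \ge 17$ comfortably rules out Cartan normalizers and exceptional classes on order grounds. Hence $H$ must lie inside some Borel subgroup.

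Finally I would rule out the Borel case via a modular-curve argument. A Borel-image monodromy means $E_t$ admits a $G_{K(t)}$-stable cyclic subgroup of order $m$, equivalently a $K(t)$-point of $Y_0(m)$; by properness of $X_0(m)$ this extends to a non-constant morphism $C \to X_0(m)$ whose composition with the degeneration $X_0(m) \to X(1)$ recovers the non-constant $j$-invariant map. Since $C \cong \mathbb{P}^1$ has gonality $1$ and cannot map non-constantly to a curve of positive genus, while $X_0(m)$ has positive genus for every prime $m \ge 17$ (the genus-zero primes being precisely $\{2, 3, 5, 7, 13\}$, all less than $17$), we reach a contradiction. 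The same argument applied to $E^{\prime}_t$ yields the statement for both factors. The main obstacle is Step 1: a priori every bad fiber could have $I_n$-parameter divisible by $m$, in which case the mod-$m$ inertia reduction is trivial; one handles this either by arguing $\ell$-adically first (local inertia is genuinely unipotent of infinite order in $SL_2(\mathbb{Z}_\ell)$ for $I_n$ fibers) and then descending, or by passing to a base change of $C$ where the Kodaira parameters become controlled relative to $m$.
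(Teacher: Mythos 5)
Your proposal is a genuinely different route from the paper's, and its Step~1 has a gap that your two suggested repairs do not close. The paper's proof never tries to exhibit a particular element of the monodromy image: it simply observes that if $\operatorname{Im}(\rho_m)$ lies in a proper subgroup $G$ (which may be enlarged to a maximal proper one), then the classifying map factors through a dominant morphism $C \to X(m)/G$, and since $C\cong\P^1$, Riemann--Hurwitz forces $X(m)/G$ to have genus~$0$; it then invokes Cojocaru--Hall's lower bound $\mathcal{N}(m)=\frac{1}{12}\bigl[m-(6+3e_2+4e_3)\bigr]$, which simultaneously covers all maximal proper subgroups --- Borels, split and non-split Cartan normalizers, and exceptional groups --- and is positive once $m\ge 17$. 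No element of order $m$ is ever needed.

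Your Step~1 is meant to force the image into a Borel via Dickson, after which only the genus of $X_0(m)$ matters. But the local mod-$m$ monodromy at an $I_n$ cusp is $\left(\begin{smallmatrix}1&n\\0&1\end{smallmatrix}\right)$, which is \emph{trivial} whenever $m\mid n$, and there is no hypothesis preventing a high-degree family from having $m\mid n$ at every multiplicative cusp for a fixed $m=17$. Neither repair you offer avoids this: a finite base change $C'\to C$ only \emph{multiplies} each $n$ by the local ramification index, so the divisibility $m\mid n$ persists; and ``arguing $\ell$-adically first'' does not help, because $\left(\begin{smallmatrix}1&n\\0&1\end{smallmatrix}\right)$ is an integral matrix determined by the Kodaira fiber, whose reduction modulo $m$ is trivial independently of any auxiliary prime $\ell$. (The full image can still turn out to be $SL_2(\Z/m\Z)$ --- the low-order additive monodromies can generate a lot --- but they do not hand you an order-$m$ element directly.) To finish along your line, you would in effect have to exclude the Cartan-normalizer and exceptional possibilities by their own genus computations, which is precisely what the Cojocaru--Hall bound over all maximal subgroups does in the paper; that is also why the explicit threshold in the statement is $m\ge 17$ rather than merely the value at which $X_0(m)$ first has positive genus.
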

\begin{proof}
	Suppose the Galois image of the $m$-torsion monodromy representation is some proper subgroup $G$ of $SL_{2}(\Z/m\Z)$. Then we have a dominant map $f: C \to X(m)/G$. Since the genus of $C$ is zero, this implies that the genus of the modular curve $X(m)/G$ is zero. 
	
	Let $\mathcal{N}(m)$ be the quantity such that
	
	$$\mathcal{N}(m):= \operatorname{min} \{ \text{genus}(X(m)/G) \mid G \subsetneq SL_{2}(\Z/m\Z),\text{ } G \text{ maximal}\}.$$
	Cojocaru and Hall proved the genus formula for $X(m)/G$ for all possible maximal subgroup $G$ of $SL_{2}(\Z/m\Z)$, which is summarized in a table \cite[Table 2.1]{CH05}. The number $\mathcal{N}(m)$ also coincides when the minimum is taken over all proper subgroups of $SL_2(\mathbb{Z}/m\mathbb{Z})$. Moreover, they proved that 
	$$\mathcal{N}(m)=\frac{1}{12}\left[m-\left(6+3 \mathrm{e}_2+4 \mathrm{e}_3\right)\right]>0$$
	for $m \ge 17$. The proposition follows.
\end{proof}

 Let $U \subset C$ be the connected dense open locus parametrizing smooth points, i.e., pairs of genuine elliptic curves. The \'etale fundamental group $\pi_{1}(U)$ is a quotient of the absolute Galois group of $K(t)$, which acts on the $m$-torsion of the generic fiber, say $E_{t}[m]$ and $E^{\prime}_{t}[m]$, in the usual Galois way.

\begin{lem}\label{full monodromy} 
 Let $M^{\prime}=\max\{17,M_1\}$ and $m \ge M^{\prime}$, where the absolute integer $M_1$ is defined in Theorem \ref{Bakker and Tsimerman}. Then the Galois image of the $m$-torsion monodromy representation of $\pi_{1}(U)$ is the full group $SL_{2}(\Z/m\Z) \times SL_{2}(\Z/m\Z)$.
  \end{lem}

  \begin{proof} Proposition \ref{choice of m} asserts that for $m \ge 17$, the map
  $$\rho_{m}: \pi_{1}(U) \to SL_{2}(\Z/m\Z) \times SL_{2}(\Z/m\Z)$$
  is surjective on each of the two factors. Therefore the reduction map
  $$\bar{\rho}_{m}: \pi_{1}(U) \to PSL_{2}(\Z/m\Z) \times PSL_{2}(\Z/m\Z)$$
  is surjective on each factor. For $m \ge 5$, the projective special linear group $$PSL_{2}(\Z/m\Z)=SL_{2}(\Z/m\Z)/\{\pm 1\}$$ is simple. By assumption, the generic fiber $E_{t}$ and $E^{\prime}_{t}$ of the family are not isogenous. Therefore by the work of Bakker and Tsimerman \cite[Theorem 1]{BT}, there exists an absolute constant $M_1$ such that for any prime $m > M_{1}$, the image of $\rho_{m}$ on each of the factors is non-isomorphic. We claim that the image of the reduction map $\bar{\rho}_{m}$ also has non-isomorphic factors. Hence the inequivalence condition in Lemma \ref{Hall} is satisfied. Lemma \ref{Hall} leads to the conclusion that the Galois image of the $m$-torsion monodromy representation of $\pi_{1}(U)$ is full in $PSL_{2}(\Z/m\Z) \times PSL_{2}(\Z/m\Z)$.
  
  We now prove the claim. It is sufficient to show that for any isomorphism $\nu: PSL_2(\Z/m\Z) \to PSL_2(\Z/m\Z)$ there exists a unique isomorphism $ \upsilon: SL_2(\Z/m\Z) \to SL_2(\Z/m\Z)$ lifts $\nu$. The existence of such lift is due to the vanishing of $H^2(SL_2(\F_m), \mu_2)$ for any prime $m \ge 5.$ Indeed, since $H_2(SL_2(\F_m), \Z)=0$, the universal coefficient theorem implies $$H^2(SL_2(\F_m), \mu_2) \simeq \operatorname{Ext}^1(H_1(SL_2(\F_m),\Z), \mu_2)$$ and $$H_1(SL_2(\F_m),\Z)=SL_2(\F_m)/[SL_2(\F_m),SL_2(\F_m)]=0.$$
  
  Take the matrix \[
   S=
  \left[ {\begin{array}{cc}
   1 & 0 \\
   1 & 1 \\
  \end{array} } \right]
\]
in $SL_2(\Z/m\Z)$. Assume that the image of $S$ under the composition of the reduction map $SL_2(\Z/m\Z) \to PSL_2(\Z/m\Z)$ and $\nu$, lift to $A$ or $-A$ in $SL_2(\Z/m\Z)$. Since $S^m=I_2$, the lifted homomorphism, if it exists, must send $S$ to whichever $A$ or $-A$ whose $m$-th power is the identity matrix. Similarly for the matrix \[
   T=
  \left[ {\begin{array}{cc}
   1 & a \\
   0 & 1 \\
  \end{array} } \right]
\]
where $a$ is some primitive element in $\F_m^*$. By Dickson's theorem, $S$ and $T$ generate $SL_2(\Z/m\Z)$. This proves the uniqueness of the lift.
  \end{proof}
  
Now we are ready to prove that $C_{H}$ is irreducible. This follows as a consequence of Proposition \ref{choice of m}: 

\begin{lem}\label{connected}
For each choice of $H$, the curve $C_{H}$ is integral.	
\end{lem}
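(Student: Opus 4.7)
The plan is to count the geometric irreducible components of $C_H$ via monodromy and show there is just one. Set $G = SL_2(\Z/m\Z)/\{\pm 1\}$, so that $X(m) \times X(m) \to X(1) \times X(1)$ is Galois with group $G \times G$ and $X_H$ is the intermediate quotient by the image of $H$. The geometric components of the pullback $C_H$ correspond bijectively to orbits of the geometric monodromy image $M \subseteq G \times G$ on the coset space $(G \times G)/H$. Reducedness of $C_H$ is automatic because $X_H \to X(1) \times X(1)$ is generically étale and $C$ is not contained in the branch locus, so integrality reduces to transitivity of the $M$-action on $(G \times G)/H$; for this it suffices to prove $M = G \times G$.

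To determine $M$, I would first invoke Proposition \ref{choice of m}: for $m \ge 17$ the mod-$m$ monodromies of $E_t$ and of $E'_t$ individually are each all of $G$, so both projections $M \to G$ are surjective. Goursat's lemma (Lemma \ref{Hall}) then presents $M$ as the fiber product of $G$ with itself over a common quotient $G/N_1 \simeq G/N_2$ for normal subgroups $N_i \trianglelefteq G$. Since $G \cong PSL_2(\F_m)$ is simple for $m \ge 17$ prime, either $N_1 = N_2 = G$, giving $M = G \times G$, or $N_1 = N_2 = \{1\}$, so that $M$ is the graph of an automorphism $\varphi$ of $G$.

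The main step is to exclude the graph case. If $\varphi$ is inner, conjugation by an appropriate element of $G \times G$ makes $M$ the diagonal, so $E_t[m]$ and $E'_t[m]$ are isomorphic as local systems on the smooth locus of $C$. If $\varphi$ is outer, then because $\operatorname{Out}(PSL_2(\F_m))$ has order two the same conclusion holds after a quadratic étale cover of $C$, in the spirit of Lemma \ref{twist} and the preimage argument in Proposition \ref{Qbar}. In either case Theorem \ref{Bakker and Tsimerman} of Bakker and Tsimerman, valid for $m > M_1$, then forces $E_t$ and $E'_t$ to be isogenous over $\overline{K(t)}$, contradicting the standing hypothesis. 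Hence $M = G \times G$, which acts transitively on $(G \times G)/H$ for any proper subgroup $H$, in particular for both $H = \tilde{H}_{\Delta}$ and $H = H_{\text{p}}$.

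The main technical obstacle will be handling the outer-automorphism subcase cleanly: one has to verify that the quadratic étale cover absorbing the outer automorphism has gonality bounded independently of $m$, so that the constant $M_1$ in Bakker-Tsimerman remains absolute, and that isogeny after such a cover is equivalent to $\overline{K(t)}$-isogeny.
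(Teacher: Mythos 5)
Your proposal follows the same strategy as the paper: reduce integrality to transitivity of the geometric monodromy on cosets, invoke Proposition~\ref{choice of m} for surjectivity onto each factor, apply Goursat (Lemma~\ref{Hall}) using simplicity of $PSL_2(\F_m)$, and rule out the graph case via Bakker--Tsimerman. The paper's proof of Lemma~\ref{connected} is in fact a two-line pointer to exactly this computation, carried out inside the proof of Proposition~\ref{genericE}; so you have reconstructed, and fleshed out, that argument rather than found a new route. A few places where you are more careful than the paper: you separate reducedness (generic étaleness) from irreducibility, you make the Goursat case analysis explicit, and you flag the gonality dependence of the Bakker--Tsimerman constant when a quadratic cover is introduced — a point the paper does not acknowledge, and which is harmless only because any double cover of a rational curve has gonality at most $2$, so one may take the BT constant for gonality bound $3$.

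One small inaccuracy worth correcting: the dichotomy is not really ``inner vs.\ outer.'' In the inner case you assert that conjugating $M$ to the diagonal of $PSL_2 \times PSL_2$ forces $E_t[m] \cong E'_t[m]$ as local systems, but having conjugate \emph{projectivized} representations only determines the $SL_2$-valued local systems up to a character $\pi_1(U) \to \{\pm 1\}$ coming from the kernel of $SL_2 \to PSL_2$. So a quadratic twist (equivalently, a degree-$2$ cover of $U$) can intervene in the inner case just as in the outer one; it is not specifically attached to the outer automorphism. Since $\operatorname{Aut}(PSL_2(\F_m)) = PGL_2(\F_m)$ for $m$ an odd prime, the cleanest formulation covers both cases at once: lift the conjugating element to $g \in GL_2(\F_m)$, conclude $\rho' = \chi \cdot g\rho g^{-1}$ for some quadratic character $\chi$, and then either pass to the double cover killing $\chi$ or replace $E'_t$ by its quadratic twist before applying Bakker--Tsimerman. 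With that adjustment your argument is correct and essentially identical to the paper's.
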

\begin{proof}
	We have the restriction map $q_{\tilde{U}}: \tilde{U} \to U$ where $\tilde{U}$ is the preimage of $U$ under the quotient map $q$. By Lemma \ref{full monodromy}, the monodromy of $\pi_{1}(U)$ acts transitively on the right cosets $SL_{2}(\Z/m\Z) \times SL_{2}(\Z/m\Z) / H$, which implies that the cover $C_{H}$ is connected for both $H= \tilde{H}_{\Delta}$ and $H=H_{\operatorname{p}}$. The lemma follows by the fact that the quotient space of a connected space is connected. 
  
\end{proof}

 \subsection{Construct projective embeddings}
In this subsection, we give an explicit construction of projective embeddings of $C_{H}$, denoted by $\iota_{H}$. We make the following diagram commute for each case's choice of $N \in \Z$. Here the rational map $\P^{N} \dashrightarrow \P^{3}$ is a projection of coordinates of $\P^N$.
 
\begin{equation}\label{embedding}
\begin{tikzcd}
 C_{H}   \arrow[r, hook, "\iota_{H}"] \arrow[d, "q", swap] & \P^{N} \arrow[d, dotted]   \\
 C   \arrow[r, hook] & \P^{3}. 	
\end{tikzcd} 
\end{equation}

  \noindent \textbf{Case I: The modular diagonal quotient surfaces.}
  
  Recall the definition of the modular diagonal quotient surfaces in $\S 2.2$. In our case where $\epsilon=1$, $X_{H_{\Delta}}(m)(K)$ has the moduli interpretation that it is the set of isomorphism classes of triples $\left(E_{1}, E_{2}, \psi\right)$, where $E_{1}$, $E_{2}$ are elliptic curves over $K$ and $\psi: E_{1}[m] \stackrel{\sim}{\rightarrow} E_{2}[m]$ is an isomorphism of the $m$-torsion subgroups of the elliptic curves which preserves the Weil pairing. 
Let $t$ be a rational point of $C(K)$ such that there exists a point $(E_{t},E_{t}^{\prime}, \psi: E_{t}[m] \stackrel{\sim}{\rightarrow} E_{t}^{\prime}[m])$ which is a point of $X_{H_{\Delta}}(m)(K)$. 

We now define some functions on $C_{\tilde{H}_{\Delta}}$ in order to apply the result from \cite{CCDN19} and to bound the number of points on $C_{\tilde{H}_{\Delta}}$.

For a fixed $t \in K$, there is a list of elliptic curves isogenous to $E_{t}$ through cyclic isogenies of degree $m$ given by the list of cyclic subgroups of $E_{t}[m]$, say

$$E_{t,1},\cdots, E_{t,m+1}.$$

Similarly we have a list of $m$-cyclic subgroups of $E_{t}^{\prime}[m]$ parametrizing the $m$-cyclic isogenies of $E_{t}^{\prime}$, with the corresponding list of elliptic curves isogenous to $E_{t}^{\prime}$: 

$$E_{t,1}^{\prime},\cdots, E_{t, m+1}^{\prime}.$$

\begin{remark}
	These isogenies will most likely \textit{not} be defined over $K$ unless $E_{t}[m]$ or $E_{t}^{\prime}[m]$ has a rational cyclic subgroup. Even then, most of them will not be defined over $K$.
\end{remark}

For each point $(E_{t}, E_{t}^{\prime}, \psi)$ on $C_{\tilde{H}_\Delta}$, we have $m+1$ cyclic subgroups of $E_{t}[m]$ and $m+1$ cyclic subgroups of $E_{t}^{\prime}[m]$ which can be placed in natural bijection with each other under $\psi$. We can re-order the lists for $E_{t}^{\prime}$ such that $E_{t,1}$ is in correspondence with $E_{t,1}^{\prime}$ and so on.
\vspace{1em}
 
\begin{defn}\label{F}

	Let $F$ be a function defined on $X_{\tilde{H}_{\Delta}}$ given by
	$$F(E_{t}, E_{t}^{\prime}, \psi)=j(E_{t,1})j(E^{\prime}_{t,1})+ \cdots +j(E_{t, m+1})j(E^{\prime}_{t, m+1}).$$
	
	\end{defn}
\vspace{1em}
\begin{lem}

$F$ is defined over $\Q$. Moreover, $F$ is an element of the function field $\Q(X_{\tilde{H}_{\Delta}}).$	
\end{lem}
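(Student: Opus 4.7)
The plan is to verify two things: (a) $F$ defines a rational function on $X_{\tilde H_\Delta}$, not just a formal expression attached to an enumeration of cyclic subgroups, and (b) that function is $\Gal(\overline{\Q}/\Q)$-invariant, hence lies in $\Q(X_{\tilde H_\Delta})$.

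For (a), I first work on $X(m)\times X(m)$ and show $F$ descends through two successive quotients. The defining sum a priori uses an ordered enumeration of the $m+1$ cyclic order-$m$ subgroups of each of $E_t[m]$ and $E_t'[m]$, with the two orderings paired via $\psi$. However $\sum_i j(E_{t,i})j(E_{t,i}')$ is invariant under simultaneously permuting both enumerations by any $\sigma\in S_{m+1}$, so it depends only on the bijection $\mathcal{B}(\psi)$ between the two unordered sets of cyclic subgroups. This already shows $F$ descends to $X_{H_\Delta}$. To push down further, I use the classification lemma just proved: applied with $A=SL_2(\Z/m\Z)$, it identifies $\tilde H_\Delta$ with the normal subgroup $N=\{\pm 1\}\trianglelefteq A$, so $\tilde H_\Delta/H_\Delta$ is generated by the class of $(-\mathrm{id},\mathrm{id})$. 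Unwinding the action, this element sends $(E_t,E_t',\psi)$ to $(E_t,E_t',-\psi)$. Since every cyclic subgroup of $E_t[m]$ is stable under $[-1]$, the bijections $\mathcal{B}(\psi)$ and $\mathcal{B}(-\psi)$ coincide, so $F$ takes the same value on both triples and descends to $X_{\tilde H_\Delta}$.

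For (b), the modular surface $X_{\tilde H_\Delta}$ admits a canonical model over $\Q$ from its moduli description, so it suffices to show $F^\sigma=F$ for all $\sigma\in\Gal(\overline{\Q}/\Q)$. Any such $\sigma$ permutes the cyclic subgroups of $E_t[m]$ and of $E_t'[m]$ compatibly with $\psi$, since $\psi$ is a morphism of finite $\Q$-group schemes on the universal family. Therefore $\sigma$ merely permutes the matched pairs $\{(E_{t,i},E_{t,i}')\}$ as an unordered set, and the symmetric sum defining $F$ is preserved.

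The only real subtlety lies in pinning down the convention for the $SL_2(\Z/m\Z)\times SL_2(\Z/m\Z)$-action on $X(m)\times X(m)$ so that the nontrivial coset of $\tilde H_\Delta/H_\Delta$ really acts as $\psi\mapsto -\psi$; once that is arranged, the key geometric input---that cyclic subgroups of $E[m]$ are preserved by $[-1]$---delivers the $\tilde H_\Delta$-invariance immediately, and the rest is formal.
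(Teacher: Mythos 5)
Your proof is correct, and it is actually more complete than the paper's own, which addresses only Galois-invariance and silently takes for granted that the displayed formula already gives a well-defined function on $X_{\tilde H_\Delta}$. Your step (a) supplies exactly the missing justification: the sum is invariant under simultaneous reordering, so it factors through the induced bijection on unordered sets of cyclic subgroups (this already handles descent to $X_{H_\Delta}$), and then the Goursat-type classification shows the extra generator of $\tilde H_\Delta/H_\Delta$ acts by $\psi \mapsto -\psi$, which fixes every cyclic subgroup of $E[m]$ because $[-1]G = G$; hence $F$ descends to $X_{\tilde H_\Delta}$. Your step (b) is essentially the paper's argument, recast more precisely: where the paper writes ``$\psi^g = \psi$'' (which is not literally true for arbitrary $\overline{\Q}$-points), the right statement is the one you give, namely that a Galois element permutes the cyclic subgroups of $E_t[m]$ and of $E_t'[m]$ in a way compatible with $\psi$, so it only permutes the matched pairs and leaves the symmetric sum invariant. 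In short: your route and the paper's agree on the Galois descent, but you add the descent from $X(m)\times X(m)$ down to $X_{\tilde H_\Delta}$, which the paper asserts without proof in the definition of $F$; this extra care is worthwhile because the sign ambiguity in $\psi$ is precisely what distinguishes $X_{\tilde H_\Delta}$ from $X_{H_\Delta}$.
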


\begin{proof}

	For any $g \in \Gal(\overline{\Q}/\Q)$, $t \in \overline{\Q}$, denote by $(t, \psi)$ one of the preimages on $X_{\tilde{H}_{\Delta}}$. 
	$$F((t,\psi)^{g})=\Sigma_{i=1}^{m+1} j(E_{t^{g},i})j(E^{\prime}_{t^{g},i})= \Sigma_{i=1}^{m+1} (j(E_{t,i})j(E^{\prime}_{t,i}))^{g}.$$
	The second equality holds since $\psi^{g}=\psi$ and for an arbitrary $1 \le i \le m+1$, there is a unique $k$ such that 
	$$(j(E_{t,i}))^{g}=j(E_{t,k})$$
	and once we fix $i$, it is also true that  
	$$(j(E_{t,i}^{\prime}))^{g}=j(E_{t,k}^{\prime})$$ 
	for the same $k$.

\end{proof}

 \begin{defn}
 	Define $\iota_{\tilde{H}_{\Delta}}^{\prime}: C_{\tilde{H}_{\Delta}} \to \P^{1} \times \P^{1} \times \P^{1}$ to be the function given by 	 $$\iota_{\tilde{H}_{\Delta}}^{\prime}((E_{t},E_{t}^{\prime}, \psi)) = (F(E_{t}, E_{t}^{\prime}, \psi), j(E_{t}), j(E_{t}^{\prime})).$$

 \end{defn}
\begin{prop}\label{genericE}
For $m \ge 17$, $\iota^{\prime}_{\tilde{H}_{\Delta}}$ is generically an embedding of $C_{\tilde{H}_{\Delta}}$ into $\P^{1} \times \P^{1} \times \P^{1}$. This is equivalent to saying that the subfield $M \subset K(C_{\tilde{H}_{\Delta}})$ generated by $j(E_{t})$, $j(E_{t}^{\prime})$ and $F$ is the whole function field.
	
\end{prop}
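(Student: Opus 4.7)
The strategy is to identify $K(C_{\tilde{H}_{\Delta}})$ as a Galois fixed subfield of the function field of the full-level pullback and then show that $F$, together with $j(E_{t})$ and $j(E_{t}')$, cuts out this subfield. Let $\tilde{C}$ be the pullback of $X(m) \times X(m) \to X(1) \times X(1)$ along $\iota$, write $\Gamma := \operatorname{Gal}(K(\tilde{C})/K(C))$ for its Galois group over $C$, and set $\alpha_{i} = j(E_{t,i})$ and $\beta_{i} = j(E_{t,i}')$ so that $F = \sum_{i=1}^{m+1} \alpha_{i} \beta_{i}$.

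First I establish $\Gamma = PSL_{2}(\mathbb{F}_{m}) \times PSL_{2}(\mathbb{F}_{m})$. By Proposition \ref{choice of m}, each factor projection is surjective onto $PSL_{2}(\mathbb{F}_{m})$. Goursat's Lemma \ref{Hall} then realizes $\Gamma$ as the graph of an isomorphism $PSL_{2}/N_{1} \cong PSL_{2}/N_{2}$ for some normal subgroups $N_{i}$. Since $PSL_{2}(\mathbb{F}_{m})$ is simple for $m \ge 5$, either $N_{i} = PSL_{2}$ (giving the full product, as wanted) or $N_{i} = 1$, in which case $\Gamma$ is the graph of an automorphism of $PSL_{2}(\mathbb{F}_{m})$. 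In the latter case the $m$-torsion local systems of $E_{t}$ and $E_{t}'$ on a dense open $U \subset C$ agree up to an outer automorphism, so Theorem \ref{Bakker and Tsimerman} forces an $\overline{K(t)}$-isogeny $E_{t} \sim E_{t}'$, contradicting the hypothesis. This simultaneously supplies the monodromy assertion used in Lemma \ref{connected}.

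Next I compute the action of $\Gamma$ on $F$. An element $(g,h) \in \Gamma$ permutes the labelings of cyclic subgroups of $E_{t}[m]$ and $E_{t}'[m]$ via the natural action of $PSL_{2}(\mathbb{F}_{m})$ on $\mathbb{P}^{1}(\mathbb{F}_{m})$, so reindexing yields $(g,h)\cdot F = \sum_{j}\alpha_{j}\beta_{\pi(j)}$ for a permutation $\pi \in PSL_{2}(\mathbb{F}_{m})$ depending on $(g,h)$, with $\pi = \mathrm{id}$ precisely when $g = h$ in $PSL_{2}$ (by faithfulness of the $PSL_{2}$-action on $\mathbb{P}^{1}(\mathbb{F}_{m})$). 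Hence $(g,h)$ fixes $F$ if and only if
\[
\sum_{j=1}^{m+1} \alpha_{j}\bigl(\beta_{j} - \beta_{\pi(j)}\bigr) = 0,
\]
and the heart of the argument is the claim that $\alpha_{1}, \ldots, \alpha_{m+1}$ are linearly independent over $K(C)(\beta_{1}, \ldots, \beta_{m+1})$. Granted this, together with the distinctness of the $\beta_{j}$ (from separability of $\Phi_{m}(Y, j(E_{t}'))$), the displayed equation forces $\beta_{j} = \beta_{\pi(j)}$ for every $j$, hence $\pi = \mathrm{id}$, hence $g = h$.

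For the linear-independence claim, note that $K(C)(\alpha_{\bullet})$ and $K(C)(\beta_{\bullet})$ are linearly disjoint over $K(C)$: both have degree $|PSL_{2}(\mathbb{F}_{m})|$ and their compositum is $K(\tilde{C})$, of degree $|PSL_{2}(\mathbb{F}_{m})|^{2}$ by the full monodromy above. By linear disjointness it suffices to prove $K(C)$-linear independence. As a $PSL_{2}(\mathbb{F}_{m})$-module under the Galois action, the $K(C)$-span of the $\alpha_{i}$ is a quotient of the permutation representation on $\mathbb{P}^{1}(\mathbb{F}_{m})$, which in characteristic zero decomposes as trivial $\oplus$ standard, the standard summand being $m$-dimensional and irreducible thanks to the $2$-transitivity of $PSL_{2}(\mathbb{F}_{m})$ on $\mathbb{P}^{1}(\mathbb{F}_{m})$. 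The trivial summand appears because $\sum_{i}\alpha_{i}$ is a nonzero coefficient of $\Phi_{m}(X, j(E_{t}))$, and the standard summand appears because the $\alpha_{i}$ are not all equal and standard is irreducible; so the span has full dimension $m+1$. Combining, $\operatorname{Stab}_{\Gamma}(F) = H_{\Delta}^{\mathrm{PSL}}$ equals the image of $\tilde{H}_{\Delta}$ in $\Gamma$, and Galois theory then gives $K(C)(F) = K(\tilde{C})^{H_{\Delta}^{\mathrm{PSL}}} = K(C_{\tilde{H}_{\Delta}})$. Since $K(C) = K(j(E_{t}), j(E_{t}'))$ by birationality of $C \to X(1) \times X(1)$ onto its image, this yields $M = K(C_{\tilde{H}_{\Delta}})$. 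The main obstacle is the linear-independence step: it genuinely requires both the full-monodromy output of Step 1 and the irreducibility of the standard $PSL_{2}(\mathbb{F}_{m})$-representation afforded by $2$-transitivity on $\mathbb{P}^{1}(\mathbb{F}_{m})$.
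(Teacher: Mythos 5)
Your proof is correct in outline and proves something slightly stronger, but it follows a genuinely different route from the paper in the crucial step. Both proofs share the same opening move --- full monodromy in $PSL_2(\mathbb{F}_m)\times PSL_2(\mathbb{F}_m)$ via Proposition~\ref{choice of m}, Goursat (Lemma~\ref{Hall}) and Bakker--Tsimerman --- and both exploit $2$-transitivity of $PSL_2(\mathbb{F}_m)$ on $\mathbb{P}^1(\mathbb{F}_m)$, so that the permutation module splits as trivial $\oplus$ irreducible. Where they diverge: the paper does \emph{not} compute $\operatorname{Stab}_\Gamma(F)$. It proves only the weaker statement that $F\notin K(C)$, by specializing at a single non-CM point $t$ and applying the abstract inner-product Lemma~\ref{sigma} to the numerical vectors $v_t=(j(E_{t,i}))_i$ and $v_t'=(j(E'_{t,i}))_i$; it then finishes by observing that $\widetilde{H}_\Delta$ maps to the diagonal, a \emph{maximal} subgroup of $PSL_2\times PSL_2$, so there is no intermediate cover and $K(C)\subsetneq M\subseteq K(C_{\widetilde{H}_\Delta})$ forces equality. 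You instead pin down $\operatorname{Stab}_\Gamma(F)$ exactly as the diagonal, working at the level of the function field: you prove linear disjointness of $K(C)(\alpha_\bullet)$ and $K(C)(\beta_\bullet)$ (a consequence of full monodromy), show the $\alpha_i$ are $K(C)$-linearly independent by identifying their span with the full $(m+1)$-dimensional permutation module, and then read off the stabilizer from $\sum_j\alpha_j(\beta_j-\beta_{\pi(j)})=0$ plus separability of $\Phi_m(Y,j(E'_t))$. Your route is longer but more informative (it determines the exact subfield $K(C)(F)$ rather than only that it is strictly bigger than $K(C)$) and it replaces the ``no intermediate cover from maximality'' shortcut by the Galois correspondence directly. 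The trade-off is that you need the additional nonvanishing $\sum_i\alpha_i\neq 0$ (the coefficient of $X^m$ in $\Phi_m(X,j(E_t))$) to rule out the standard summand being the whole span, whereas the paper's specialized Lemma~\ref{sigma} only ever produces the conclusion ``all coordinates equal,'' which is directly excluded by non-CM; you should at least cite or briefly justify that this coefficient is a nonzero polynomial in $j$, since it is asserted without argument. Finally, both proofs tacitly use $K(C)=K(j(E_t),j(E'_t))$, i.e.\ that $C\to X(1)\times X(1)$ is birational onto its image; you flag this explicitly, which the paper does not.
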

\vspace{1em}
The proof of Proposition \ref{genericE} splits into two parts. The first part is to show that $M$ is not contained in $K(C)$. The second part is to show there is no intermediate extension between $K(C)$ and $K(C_{\tilde{H}_{\Delta}})$, so that $K(C) \subseteq M \subseteq K(C_{\tilde{H}_{\Delta}})$ and $M \ne K(C)$ implies $M=K(C_{\tilde{H}_{\Delta}}).$

\vspace{1em}

First, we present some representation theoretical lemma which we will use later.
Let $G$ be a finite group and let $V$ and $V^{\prime}$ be permutation representations of $G$ of the same dimension, such that $G$ acts $2$-transitively on some finite sets of $V$ and $V^{\prime}$. It is a standard fact that the corresponding representation $V$(resp. $V^{\prime}$) is the direct sum of the trivial representation and an irreducible representation whose coordinates sum to $0$. Denote the irreducible representation by $V_{0}$(resp. $V^{\prime}_{0}$). We prove that if $v \in V$, $v^{\prime} \in V^{\prime}$ such that $\langle v, v^{\prime} \rangle = \langle v, (v^{\prime})^{g} \rangle$ for any $g \in G$, then either $v$ or $v^{\prime}$ is fixed by $G$. We first prove the following lemma.

\begin{lem}\label{4.6}
If $v \in V$(resp. $v^{\prime} \in V^{\prime}$) is not in the trivial representation generated by $[1, \cdots , 1]^{T}$, the set $\{v-v^{g}\}$(resp. $\{v^{\prime}-(v^{\prime})^{g}\}$), as $g$ ranges over $G$, span the space of all vectors in $V_{0}$(resp. $V^{\prime}_{0}$). 
\end{lem}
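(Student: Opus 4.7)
The plan is to use irreducibility of $V_{0}$ directly via a standard subrepresentation argument: I will show that the span
\[ W := \operatorname{span}\{v - v^{g} : g \in G\} \]
is a nonzero $G$-stable subspace of $V_{0}$, hence equals $V_{0}$ by irreducibility.

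First, I would check that $W$ actually sits inside $V_{0}$. The decomposition $V = \langle [1,\ldots,1]^{T}\rangle \oplus V_{0}$ is $G$-stable, so the projection onto the trivial summand is $G$-equivariant; since $G$ acts trivially on $\langle [1,\ldots,1]^{T}\rangle$, the trivial-part projections of $v$ and $v^{g}$ coincide, forcing $v - v^{g} \in V_{0}$ for every $g \in G$.

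Next I would verify that $W$ is a $G$-subrepresentation of $V_{0}$. A one-line check suffices: for any $h \in G$,
\[ (v - v^{g})^{h} = v^{h} - v^{gh} = (v - v^{gh}) - (v - v^{h}) \in W, \]
so $W$ is closed under the $G$-action. Irreducibility of $V_{0}$ then forces $W \in \{0, V_{0}\}$. The case $W = 0$ would mean $v^{g} = v$ for every $g \in G$, so $v \in V^{G}$; but since $G$ acts transitively (indeed $2$-transitively) on the chosen basis, $V^{G} = \langle [1,\ldots,1]^{T}\rangle$, contradicting the hypothesis that $v$ is not in the trivial subrepresentation. Hence $W = V_{0}$, and the argument for $v' \in V'$ is identical by symmetry.

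There is no real obstacle here; the lemma is a formal consequence of the $G$-irreducibility of $V_{0}$, with the $2$-transitivity in the set-up playing its role only implicitly, via guaranteeing that $V_{0}$ (respectively $V_{0}'$) is irreducible and that the $G$-fixed subspace of the permutation representation is one-dimensional.
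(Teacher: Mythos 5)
Your proof is correct and follows essentially the same route as the paper: form the span $W$ of the differences $v - v^{g}$, observe it is a $G$-stable subspace of $V_{0}$ via the identity $(v - v^{g})^{h} = (v - v^{gh}) - (v - v^{h})$, and invoke irreducibility of $V_{0}$. You are slightly more careful than the paper in spelling out the two implicit steps — that $W$ lands inside $V_{0}$, and that $W = 0$ would force $v$ into the trivial line, contradicting the hypothesis — but the argument is the same.
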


\begin{proof}
	Let $W_{v}$(resp. $W^{\prime}_{v}$) be the subspace Span$\{v-v^{g}\}$(resp. Span$\{(v^{\prime})-(v^{\prime})^{g}\}$) as $g$ ranges over $G$. We claim that $W_{v}$ is a sub-representation of $V_{0}$, the irreducible representation of the permutation representation given by the condition that the sum of the coordinates equals $0$. Indeed, for any $v \in W_{v}$, $\sigma, \tau \in G$, one have
	$$(v-v^{\sigma})^{\tau}=v^{\tau}-v^{\sigma\tau}=(v-v^{\sigma\tau})-(v-v^{\tau}).$$
The lemma follows from the face that $V_{0}$(resp. $V^{\prime}_{0}$) is irreducible.
\end{proof}

\begin{lem}\label{sigma}
	If $v \in V$, $v^{\prime} \in V^{\prime}$ such that $\langle v, v^{\prime} \rangle = \langle v, (v^{\prime})^{g} \rangle$ for any $g \in G$, then either $v$ or $v^{\prime}$ is fixed by $G$.
\end{lem}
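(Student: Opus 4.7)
The plan is to translate the hypothesis into an orthogonality statement and then invoke Lemma \ref{4.6}. The key observation is that the identity $\langle v, v' \rangle = \langle v, (v')^g \rangle$ holding for every $g \in G$ is equivalent to the vanishing $\langle v, v' - (v')^g \rangle = 0$ for every $g$; in other words, $v$ is orthogonal to the subspace $W_{v'} := \operatorname{Span}\{v' - (v')^g : g \in G\}$.

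From here I would argue by contrapositive: suppose that $v'$ is not fixed by $G$. Since $v'$ therefore does not lie in the trivial isotypic component of $V'$, Lemma \ref{4.6} applies and gives $W_{v'} = V_0'$. Combined with the orthogonality observation above, this forces $v \perp V_0'$.

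The last step is to unpack what $v \perp V_0'$ means under the natural identification of $V$ and $V'$ with $\mathbb{C}^n$ via their permutation bases: $V_0'$ is the hyperplane of vectors whose coordinates sum to zero, so its orthogonal complement is the one-dimensional line spanned by the all-ones vector $[1,\ldots,1]^T$. Hence $v$ must itself be a scalar multiple of $[1,\ldots,1]^T$, which is manifestly fixed by every permutation action of $G$ on $V$. Therefore $v$ is fixed by $G$, as required.

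The main subtlety — more notational than mathematical — is to be explicit about how the pairing between elements of $V$ and $V'$ is interpreted, since a priori these are distinct $G$-modules. In the intended application both are literally copies of $\mathbb{C}^{m+1}$ equipped with their natural permutation bases indexing cyclic subgroups of $E_t[m]$ and $E'_t[m]$, and the inner product is the standard Euclidean one in those coordinates. Once this identification is fixed the argument above goes through, and no further use of the $2$-transitivity hypothesis is needed beyond what has already been absorbed into Lemma \ref{4.6}.
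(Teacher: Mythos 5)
Your proof is correct and follows essentially the same route as the paper: rewrite the hypothesis as $v\perp W_{v'}$, invoke Lemma \ref{4.6} to identify $W_{v'}$ with $V'_0$ when $v'$ is not fixed, and conclude $v$ lies in the trivial line. The paper phrases this as a case split on $W_{v'}=0$ versus $W_{v'}\neq 0$ rather than a contrapositive, but the two are logically identical.
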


\begin{proof}
	If $\langle v, v^{\prime} \rangle = \langle v, (v^{\prime})^{g} \rangle$ for all $g \in G$, then 
	 $$\langle v, (v^{\prime}) - (v^{\prime})^{\gamma} \rangle =0$$
	 for all $g \in G.$
	
	If $W_{v^{\prime}}=0$, then $v^{\prime}= (v^{\prime})^{g}$ for any $g \in G$ thus $v^{\prime}$ is fixed by $G$.
	If $W_{v^{\prime}} \ne 0$, by lemma \ref{4.6}, $v^{\prime}- (v^{\prime})^{g}$ span the space $V^{\prime}_{0}$ which is the orthogonal complement of the trivial representation. Therefore $$v \in W_{v^{\prime}}^{\perp}= \mathbb{C}\langle [1, \cdots, 1]^{T} \rangle,$$
	thus fixed by $G$.
	
\end{proof}

\vspace{1em}

Now we apply Lemma \ref{sigma} to the case where $G=SL_{2}(\Z/m\Z)$ and $m$ is a prime. Here $V$ and $V^{\prime}$ are $m+1$ dimensional complex representations of $SL_{2}(\F_m)$ spanned by vectors $v_t$ and $v_{t}^{\prime}$ respectively, as $t$ ranges over $C$. For $t \in C$ such that $E_{t}$ and $E_{t}^{\prime}$ are both non-singular, define $v_t$ and $v_{t}^{\prime}$ by 

\begin{equation}
	v_t =(j(E_{t,1}),\cdots,j(E_{t, m+1}))
\end{equation}

\begin{equation}
	v_{t}^{\prime} = (j(E^{\prime}_{t,1}),\cdots,j(E^{\prime}_{t, m+1})).
\end{equation}
It is easy to see that $V$ and $V^{\prime}$ are permutation representations of $SL_{2}(\Z/m\Z)$ by its transitive action on the basis.

Recall the definition of $F$ in Definition \ref{F}. We may also write $F$ as an inner product:  
$$F(E_{t}, E_{t}^{\prime}, \psi)= \langle v_t , v^{\prime}_{t} \rangle = j(E_{t,1})j(E^{\prime}_{t,1})+ \cdots +j(E_{t, m+1})j(E^{\prime}_{t, m+1}).$$
Note that $F$ is defined on $C_{\tilde{H}_{\Delta}}$ by restriction, but $SL_{2}(\Z/m\Z) \times SL_{2}(\Z/m\Z)$ does not act on $C_{\tilde{H}_{\Delta}}$. By Lemma \ref{connected} $C_{\tilde{H}_{\Delta}}$ is connected, we define $C^{\prime}$ as the Galois closure of $C_{\tilde{H}_{\Delta}}$ over $C$. This is equivalent to saying that $C^{\prime}$ is the pullback of $C$ all the way to $X(m) \times X(m)$, see diagram (\ref{pullback}). Therefore the pullback of $F$ to $X(m) \times X(m)$ is a function on $C^{\prime}$ with $F^{\sigma}=F$ for all $\sigma \in \tilde{H}_{\Delta}$. We prove that $F$ is not defined on $C$.

\begin{equation}\label{pullback}
\begin{tikzcd}
 C^{\prime} \arrow[r,hook] \arrow[d] & X(m) \times X(m) \arrow[d] \\
 C_{\tilde{H}_{\Delta}} \arrow[d] \arrow[r,hook] & X(m) \times X(m)/ \tilde{H}_{\Delta} \arrow[d] \\
 C  \arrow[r,hook] & X(1) \times X(1). 	
\end{tikzcd} 
\end{equation}

\vspace{1em}
 \begin{lem}\label{F is not defined on C}
 	$F$ is not defined on $C$.
 \end{lem}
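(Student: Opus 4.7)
The plan is to argue by contradiction. If $F$ pulled back from $C$, then as a rational function on $C'$ it would be fixed by the entire deck group of $C' \to C$; restricting this invariance to elements that act only on the second factor of $SL_2(\Z/m\Z) \times SL_2(\Z/m\Z)$ would force either $v_t$ or $v_t'$ to have all equal coordinates via Lemma~\ref{sigma}, and this last condition will be ruled out by the separability of the modular polynomial $\Phi_m$ in the second variable.

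First I would pin down the deck group $\Gal(C'/C)$. By Proposition~\ref{choice of m} each of the two $m$-torsion monodromies, on $E_t[m]$ and on $E_t'[m]$, already fills the full $SL_2(\Z/m\Z)$. Goursat's lemma (Lemma~\ref{Hall}) applied to the joint monodromy subgroup inside $SL_2(\Z/m\Z) \times SL_2(\Z/m\Z)$ therefore exhibits it as the graph of an isomorphism of quotients $G/N_1 \simeq G/N_2$ with $G = SL_2(\Z/m\Z)$ and $N_i \lhd G$. For $m \ge 17$, the only normal subgroups of $G$ are $G$, $\{\pm 1\}$, and the trivial subgroup, so either the joint monodromy is the full product (the case I want) or it is a twisted diagonal $\Delta_\epsilon$. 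In the latter scenario the graph isomorphism is precisely a $K(t)$-isomorphism of $m$-torsion local systems $E_t[m] \simeq E_t'[m]$ (up to the twist by $\epsilon$), which by Theorem~\ref{Bakker and Tsimerman} promotes to a $\overline{K(t)}$-isogeny between $E_t$ and $E_t'$ and contradicts the standing hypothesis. Hence $\Gal(C'/C) = SL_2(\Z/m\Z) \times SL_2(\Z/m\Z)$.

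Now assume for contradiction that $F$ descends to $C$. Then, viewed as a function on $C'$, it is fixed by every $(g_1, g_2) \in SL_2(\Z/m\Z) \times SL_2(\Z/m\Z)$. In particular, for each $h \in SL_2(\Z/m\Z)$, the element $(1, h)$ leaves the first level structure (and therefore $v_t$) untouched while rotating $v_t'$ into $(v_t')^{h}$. Invariance of $F = \langle v_t, v_t' \rangle$ under this element reads
\[
\langle v_t, v_t' \rangle \;=\; \langle v_t, (v_t')^h \rangle \qquad \text{as elements of } K(C'),
\]
for every $h \in SL_2(\Z/m\Z)$. I would then apply Lemma~\ref{sigma} with $G = SL_2(\Z/m\Z)$ and $V = V'$ the $(m+1)$-dimensional permutation representation on $\mathbb{P}^1(\F_m)$, extending scalars from $\C$ to the function field $K(C')$; the proof of Lemma~\ref{sigma} (and the underlying Lemma~\ref{4.6}) uses only the irreducibility of the augmentation summand $V_0$, which survives any characteristic-zero extension of scalars. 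The conclusion is that either $v_t$ or $v_t'$ lies in the trivial subrepresentation, i.e.\ has all its coordinates equal in $K(C')$.

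The last step is to rule this out. The coordinates of $v_t$ are the $m+1$ roots in $Y$ of $\Phi_m(j(E_t), Y) \in K(t)[Y]$; since $\Phi_m(X, Y)$ is irreducible and separable in $Y$ over $\mathbb{Q}(X)$, these roots cannot all coincide in any extension of $K(t)$ unless $j(E_t)$ is constant, which is excluded because the family is non-isotrivial (otherwise $E_t$ would be isogenous to $E_t'$ after base change, contradicting the hypothesis). The same applies to $v_t'$, giving the desired contradiction.

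I expect the main obstacle to be the Goursat step: one must verify that every twisted diagonal $\Delta_\epsilon$ (not only the untwisted one) is incompatible with the non-isogeny hypothesis through Theorem~\ref{Bakker and Tsimerman}, and that the characteristic-zero hypothesis of that theorem is available here via the generic fiber. Once the deck group is identified with the full product, the representation-theoretic finish via Lemma~\ref{sigma} is clean.
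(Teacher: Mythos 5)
Your proof is correct and follows essentially the same route as the paper's: invariance of $F=\langle v_t, v_t'\rangle$ under deck transformations acting on the second factor only, then Lemma~\ref{sigma} to force one of $v_t,v_t'$ into the line of constant vectors, then an argument that this degeneration cannot occur. The two modest differences are in fact improvements in rigor: you establish the fullness of $\Gal(C'/C)$ via Goursat's lemma and Theorem~\ref{Bakker and Tsimerman} \emph{before} using the invariance under $(1,h)$ (whereas the paper invokes this invariance in Lemma~\ref{F is not defined on C} and only proves the needed fullness afterwards, inside the proof of Proposition~\ref{genericE}), and you rule out the all-coordinates-equal case by separability of $\Phi_m(j(E_t),Y)$ rather than by the paper's non-CM specialization argument; the only nitpick is that the non-isotriviality you need there is best attributed to Proposition~\ref{choice of m} (full monodromy forces non-isotriviality), not to the parenthetical claim that isotriviality of $E_t$ would yield an isogeny to $E_t'$.
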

 
 \begin{proof}
 For each $\gamma \in SL_{2}(\Z/m\Z)$, the action of $\gamma$ on $F$ is given by  
 
$$F((E_{t}, E_{t}^{\prime}, \psi)^{\gamma})= \langle v_t , (v^{\prime}_{t})^{\gamma} \rangle.$$
If $F$ is defined on $C$, then for every $\gamma \in SL_{2}(\Z/m\Z)$ we have $F^{\gamma}=F$ therefore $\langle v_{t}, v_{t}^{\prime} \rangle = \langle v_t, (v_{t}^{\prime})^{\gamma} \rangle$. By Lemma \ref{sigma}, either $v_{t}$ or $v^{\prime}_{t}$ is fixed by $SL_{2}(\Z/m\Z)$. But this implies either
$$j(E_{t,1})=\cdots=j(E_{t, m+1})$$ 
or
$$j(E^{\prime}_{t,1})=\cdots=j(E^{\prime}_{t, m+1}).$$ 
Since there exists $t \in C$ such that $E_{t}$ and $E_{t}^{\prime}$ are both non-CM elliptic curves, this cannot happen. Therefore $F$ cannot be defined on $C$. 

 \end{proof}

 \vspace{1em}
	  Now we are ready to prove the generic injectivity of $\iota^{\prime}_{H_{\Delta}}$. We show no intermediate cover exists between $C_{\tilde{H}_\Delta}$ and $C$ by an argument using monodromy. 	 
  
  \begin{proof}[\textit{Proof of Proposition \ref{genericE}}]
  By lemma \ref{sigma}, we have the argument that $F$ is not defined on $C$.
  
  Recall that $C_{\tilde{H}_{\Delta}}$ is defined to be the cover of $C$ constructed by pulling back $C \to X(1) \times X(1)$ in the previous context. We prove that there is no intermediate cover between $C_{\tilde{H}_{\Delta}}$ and $C$. Suppose there is a curve $X$ such that 
  $$C_{\tilde{H}_{\Delta}} \to X \to C$$
  with all maps of degrees greater than $1$. Since the connected covering space of $C$ is in bijection with the subgroups of $\pi_{1}(C)$, $X$ corresponds to a proper subgroup $H^{\prime}$ strictly containing $\tilde{H}_{\Delta}$. However, $\tilde{H}_{\Delta}$ is a maximal subgroup of $PSL_{2}(\Z/m\Z) \times PSL_{2}(\Z/m\Z)$, which implies that $X$ is isomorphic to $C_{\tilde{H}_{\Delta}}$ by Lemma \ref{full monodromy}, contradiction.
  Therefore $C_{\tilde{H}_{\Delta}}$ is birational to its image under $\iota_{\tilde{H}_{\Delta}}^{\prime}$, which proves the proposition.    	 	
 	  \end{proof}
 	  
\vspace{1em} 	  
We have constructed a generic embedding $\iota^{\prime}_{\tilde{H}_{\Delta}}$ of $C_{\tilde{H}_{\Delta}}$ into a product of projective lines from the argument above. Composing with the Segre embedding, we get a generic embedding of $C_{\tilde{H}_{\Delta}}$ into $\P^{7}$, denoted by $\iota_{\tilde{H}_{\Delta}}$.

$$\iota_{\tilde{H}_{\Delta}}: C_{\tilde{H}_{\Delta}} \hookrightarrow \P^{1} \times \P^{1} \times \P^{1} \hookrightarrow \P^{7}.$$

One notice that $\iota_{\tilde{H}_{\Delta}}$ fits into the diagram \ref{embedding} at the beginning of this chapter, with $N=7$. 

\vspace{1em}
\noindent \textbf{Case II: the maximal parabolic quotient surfaces.}
   When $H=H_{\operatorname{p}}$ which is a product of maximal parabolic subgroups, the quotient $X(m) \times X(m)/H$ is isomorphic to $X_{0}(m) \times X_{0}(m)$. The corresponding projective embedding $\iota_{H_{\operatorname{p}}}$ can be constructed in the context of Hecke correspondence of level $SL_{2}(\Z/m\Z)$ followed by the Segre embedding, as following:
   
\begin{equation}\label{iotaHp}
	\iota_{H_{\operatorname{p}}}: X_{0}(m) \times X_{0}(m) \hookrightarrow \P^{1} \times \P^{1} \times \P^{1} \times \P^{1} \hookrightarrow \P^{15}.
\end{equation}

To be explicit, for a point $\tilde{P_{t}}=(E_{t},\tilde{E_{t}}, E_{t}^{\prime}, \tilde{E_{t}^{\prime}})$ that lifts $P_{t}=(E_{t},E_{t}^{\prime})$, where $E_{t}$ and $\tilde{E_{t}}$ are linked by a cyclic isogeny of degree $m$ and same for $E_{t}^{\prime}$ and $\tilde{E_{t}^{\prime}}$, one may write $\iota_{H_{\operatorname{p}}}$ as 

\begin{align*}
\iota_{H_{\operatorname{p}}}: (E_{t}, \tilde{E_{t}}, E_{t}^{\prime}, \tilde{E_{t}^{\prime}}) 
	 &\mapsto  j(E_{t}) \times j(\tilde{E_{t}}) \times j(E_{t}^{\prime}) \times j(\tilde{E_{t}^{\prime}}) \\
	 &\mapsto (j(E_{t})j(\tilde{E_{t}})j(E_{t}^{\prime})j(\tilde{E_{t}^{\prime}}); \cdots; j(E_{t}); j(\tilde{E_{t}}); j(E_{t}^{\prime}); j(\tilde{E_{t}^{\prime}}); 1).
\end{align*}
In this case, we choose $N=15$ in diagram \ref{embedding}.

  \section{Bound for the Change of Heights}
  
  In this section, we give an upper bound on the height of a point in $C_{H}(K)$ lying over a point $P_{t} \in C(K)$, in terms of the height $H(t)$ and the level $m$. The main result of this section is Proposition \ref{change of heights}.
  
\vspace{1em}
\noindent 5.1. \textbf{Hecke correspondence and modular polynomials.} Modular polynomials of elliptic curves, the so-called 'elliptic modular polynomials,' are the most common and simplest examples of modular equations. For a positive integer $m$, the classical modular polynomial $\Phi_{m}$ is the minimal polynomial of $j(mz)$ over $\C(j)$. In other words we have $\Phi_{m}(j(mz),j(z))=0$. The bivariate polynomial $\Phi_{m}(X,Y)$ is symmetric of degree $\psi(m)=m\prod_{p\vert   m}(1+p^{-1})$ in both variables, and its coefficients grow super-exponentially in $m$. The modular curve $Y_{0}(m)$ is birational to its image in $\P^{1} \times \P^{1}$ with $\Phi_{m}$ an equation for this image. The graph of $\Phi_{m}$ describes the Hecke correspondence such that there exists a cyclic isogeny of degree $m$ between projections onto each copy of $\P^{1}$. 

\vspace{1em}
For elliptic curves $E_{1}$ and $E_{2}$ linked with a cyclic isogeny of order $m$, we aim to find an upper bound for the height $H(j(E_{1}))$, in terms of $H(j(E_{2}))$ and the coefficients of $\Phi_{m}$. This has been worked out by Pazuki in \cite{Paz19}. 

\begin{thm}{\cite[Theorem 1.1]{Paz19}}\label{paz}
	Let $\varphi: E_{1} \rightarrow E_{2}$ be a $\overline{\mathbb{Q}}$-isogeny between two elliptic curves defined over $\overline{\mathbb{Q}}$. Let $j_{1}$ and $j_{2}$ be the respective $j$-invariants. Then one has
	
		$$\left\vert   h\left(j_{1}\right)-h\left(j_{2}\right)\right\vert    \leq 9.204+12 \log \operatorname{deg} \varphi$$ 
		where $h(.)$ denotes the absolute logarithmic Weil height.

\end{thm}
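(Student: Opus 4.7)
The plan is to pass through the Faltings height $h_F(E)$, which behaves predictably under isogeny, and then translate the resulting estimate back into a bound on the Weil height of the $j$-invariant.

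First, I would recall the standard comparison between $h(j_E)$ and $h_F(E)$. For any elliptic curve $E$ defined over $\overline{\Q}$, one can establish an explicit estimate of the form
$$|h(j_E) - 12\, h_F(E)| \leq C,$$
for an absolute constant $C$ which can be made numerically explicit. This is proved place by place using the definition of the Faltings height as a normalized Arakelov degree of the Hodge bundle: at non-archimedean places the contribution is controlled by the relation $j = c_4^3/\Delta$ between the $j$-invariant and the minimal discriminant, while at the archimedean places one needs uniform control over $\log(\|\Delta(\tau)\|\, \Im(\tau)^6)$ as $\tau$ ranges over a fundamental domain of $\mathrm{SL}_2(\Z) \backslash \mathbb{H}$, which is classical.

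Next, I would invoke Faltings' isogeny invariance: for any $\overline{\Q}$-isogeny $\varphi : E_1 \to E_2$ of degree $N$,
$$|h_F(E_1) - h_F(E_2)| \leq \tfrac{1}{2}\log N.$$
This is obtained by comparing Néron differentials through $\varphi^{*}$ and noticing that the resulting discrepancy at each place aggregates to at most $\tfrac{1}{2}\log N$. Combining the two estimates gives
$$|h(j_1) - h(j_2)| \leq 12\,|h_F(E_1) - h_F(E_2)| + 2C \leq 6 \log N + 2C,$$
which already has the correct shape. To recover the specific constants $9.204$ and $12$ asserted by Pazuki, one must optimize carefully: the coefficient $12$ (rather than $6$) presumably arises from absorbing a further archimedean error term into the $\log N$ contribution, and the numerical value $9.204$ emerges from an explicit evaluation of $C$ using known bounds for $\Delta(\tau)$ and the $j$-function on the standard fundamental domain.

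The main obstacle, and the source of most of the technical work, is obtaining sharp effective constants in the archimedean comparison between $h(j_E)$ and $h_F(E)$: one must bound uniformly the non-holomorphic quantity $\log(\|\Delta(\tau)\|\Im(\tau)^6)$ on $\mathrm{SL}_2(\Z)\backslash \mathbb{H}$, and then judiciously distribute the archimedean errors between the additive constant and the $\log \deg \varphi$ term so as to produce the clean final inequality. Everything else—the non-archimedean comparison and Faltings' isogeny formula—is essentially formal once the archimedean input is in place.
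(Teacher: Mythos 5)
This is not a result the paper proves; it is cited verbatim from Pazuki \cite{Paz19}, so there is no internal proof to compare against. Your overall strategy (compare $h(j_E)$ with the Faltings height, use isogeny invariance of the Faltings height, and recombine) is indeed the route Pazuki follows, so the architecture of your sketch is right.

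However, there is a genuine gap at your first step. The comparison
$$\bigl|h(j_E) - 12\,h_F(E)\bigr| \le C$$
with $C$ an absolute constant is \emph{false} in one direction. At an archimedean place, writing $E_\sigma \cong \C/(\Z+\tau\Z)$ with $\tau$ in the standard fundamental domain, the contribution of that place to $h(j_E) - 12\,h_F(E)$ is asymptotic to $6\log(\operatorname{Im}\tau)$ as $\operatorname{Im}\tau\to\infty$: since $\log|j(\tau)|\sim 2\pi\operatorname{Im}\tau$ while $-\log\bigl(|\Delta(\tau)|(\operatorname{Im}\tau)^6\bigr)\sim 2\pi\operatorname{Im}\tau - 6\log\operatorname{Im}\tau$. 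This quantity is bounded below (so $12\,h_F \le h(j) + c_1$ does hold with an absolute $c_1$), but it is unbounded above, growing like $\log\max(1, h(j_E))$. This is a well-known phenomenon; see Silverman's ``Heights and elliptic curves'' in Cornell--Silverman, or Gaudron--R\'emond. Consequently, your naive chain
$$|h(j_1)-h(j_2)| \le 12\,|h_F(E_1)-h_F(E_2)| + 2C \le 6\log N + 2C$$
does not go through: if it did, you would have proved a bound strictly sharper than Pazuki's for large $N$, and the ``absorb a further archimedean error term'' remark is papering over the fact that this error term is not absolutely bounded.

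The correct repair, which is what accounts for the constant $12$ rather than $6$, is to track these unbounded archimedean quantities for $E_1$ and $E_2$ \emph{jointly}. For $N$-isogenous curves the uniformizers $\tau_1,\tau_2$ are related by an integer matrix of determinant $N$, so $\operatorname{Im}\tau_2 / \operatorname{Im}\tau_1 \in [N^{-1}, N]$ after reduction to the fundamental domain, whence $|6\log\operatorname{Im}\tau_1 - 6\log\operatorname{Im}\tau_2| \le 6\log N$. The two unbounded archimedean errors therefore cancel up to an extra $6\log N$, and combined with the $6\log N$ coming from $12\cdot\tfrac12\log N$ in the Faltings-height step this yields the $12\log N$ in the theorem. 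The explicit constant $9.204$ then comes from carrying out this bookkeeping with the numerically sharp one-sided bounds on the fundamental domain.
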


Theorem \ref{paz} leads to the following corollary:

\begin{cor}\label{pazc}
Let $\varphi: E_{1} \rightarrow E_{2}$ be a $\overline{\mathbb{Q}}$-isogeny between two elliptic curves defined over $\overline{\Q}$ which is cyclic of degree $m$. Let $j_{1}$ and $j_{2}$ be the respective $j$-invariants. Then one has
	$$H(j_{1}) < Am^{12}H(j_{2})$$
	for some absolute constant $A$. Here $H(.)$ denotes the projective height.
\end{cor}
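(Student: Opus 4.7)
The plan is simply to exponentiate the bound in Theorem \ref{paz}. Recall that the absolute logarithmic Weil height $h$ and the projective (multiplicative) absolute height $H$ are related by $h(x) = \log H(x)$, so a bound on the difference of logarithmic heights translates directly into a multiplicative bound on the heights themselves.

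Concretely, I would first observe that since $\varphi \colon E_{1} \to E_{2}$ is a cyclic isogeny of degree $m$, Theorem \ref{paz} gives
\begin{equation*}
\bigl|h(j_{1}) - h(j_{2})\bigr| \;\le\; 9.204 + 12 \log m.
\end{equation*}
In particular
\begin{equation*}
h(j_{1}) \;\le\; h(j_{2}) + 9.204 + 12 \log m.
\end{equation*}

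Next I would exponentiate both sides. Using $h = \log H$, this yields
\begin{equation*}
H(j_{1}) \;\le\; e^{9.204} \cdot m^{12} \cdot H(j_{2}),
\end{equation*}
so the statement holds with the absolute constant $A = e^{9.204}$ (any constant strictly larger will do and absorb the strict inequality). There is no real obstacle here, as the only nontrivial input is Pazuki's theorem, which has been quoted; the rest is the routine dictionary between the additive and multiplicative conventions for heights. The only point worth being careful about is to apply Pazuki's bound to the cyclic isogeny $\varphi$ itself (so that $\deg \varphi = m$) rather than to a composition, which here is immediate from the hypothesis that $\varphi$ is cyclic of degree $m$.
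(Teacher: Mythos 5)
Your proposal is correct and is exactly what the paper intends: the corollary is stated as an immediate consequence of Theorem \ref{paz} (the paper gives no separate proof), and exponentiating $h(j_1)-h(j_2)\le 9.204+12\log m$ with the dictionary $h=\log H$ yields $H(j_1)\le e^{9.204}m^{12}H(j_2)$. Your remark about enlarging $A$ slightly to get a strict inequality is the right way to square the $\le$ with the stated $<$.
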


\vspace{1em}
\noindent 5.2. \textbf{Bounding change of heights.} We prove an upper bound on the product of heights, which we will use later.

\begin{lem}\label{5.3}
Let $d$ be the projective degree of $C$ under the embedding $\iota$, see (\ref{iota}). Let $H(\iota)$ be the height of $\iota$ defined by the height of the coefficients of the defining polynomials of $j(E_{t})$ and $j(E_{t}^{\prime}).$ Then for every $t \in K$ with $H(t) \le B$, we have
$$H(P_{t}) \le (d+1)H(\iota)B^{d}.$$	
\end{lem}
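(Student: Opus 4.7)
The plan is to write $\iota(t)$ as an explicit quadruple of polynomials in $t$, each of degree at most $d$, and then apply the standard place-by-place bound on the height of a polynomial value, namely that a polynomial of degree $\le d$ evaluated at a point of height $\le B$ has height at most $(d+1)$ times the height of its coefficients times $B^{d}$.

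First, I would write $j(E_{t})=f_{1}(t)/g_{1}(t)$ and $j(E^{\prime}_{t})=f_{2}(t)/g_{2}(t)$ with coprime $f_{i},g_{i}\in K[t]$. Composing the $j$-map with the Segre embedding gives the homogeneous representation
\begin{equation*}
\iota(t) = \bigl[f_{1}(t)f_{2}(t) : f_{1}(t)g_{2}(t) : g_{1}(t)f_{2}(t) : g_{1}(t)g_{2}(t)\bigr] =: [F_{0}(t):F_{1}(t):F_{2}(t):F_{3}(t)].
\end{equation*}
By the very definition $d=\deg \iota^{*}\mathcal{O}_{\mathbb{P}^{3}}(1)$, each coordinate polynomial $F_{k}(t)=\sum_{i=0}^{d}a_{k,i}t^{i}$ has degree at most $d$, and the height $H(\iota)$ of the embedding is the projective height of the combined coefficient tuple $\{a_{k,i}\}$.

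Next, I would estimate $H(\iota(t))$ place by place. For any place $v$ of $K$, the triangle inequality yields
\begin{equation*}
\max_{k}|F_{k}(t)|_{v} \;\le\; \varepsilon_{v}\,\max_{k,i}|a_{k,i}|_{v}\cdot\max(1,|t|_{v})^{d},
\end{equation*}
where $\varepsilon_{v}=d+1$ at archimedean places (from summing over the $d+1$ monomial terms) and $\varepsilon_{v}=1$ at non-archimedean places (by the ultrametric inequality). Raising to the power $n_{v}/[K:\Q]$ and taking the product over all $v\in M_{K}$, the non-archimedean contributions multiply to $1$, while the archimedean ones contribute $(d+1)^{\sum_{v\mid\infty}n_{v}/[K:\Q]}=(d+1)$ since $\sum_{v\mid\infty}n_{v}=[K:\Q]$. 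Combined with Definition \ref{defniota} this gives
\begin{equation*}
H(P_{t})=H(\iota(t))\;\le\;(d+1)\,H(\iota)\,H(t)^{d}\;\le\;(d+1)\,H(\iota)\,B^{d},
\end{equation*}
as desired.

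The only mild subtlety is bookkeeping: one must check that the archimedean factor truly collapses to a single $(d+1)$ after the global product, and that $H(\iota)$ as stated in the lemma coincides with the projective height of the coefficient vector of the $F_{k}$ (possibly up to an absorbable combinatorial constant arising when multiplying $f_{i}$ with $g_{j}$). Neither point is a real obstacle; the argument is essentially the textbook estimate for polynomial evaluation, made visible through the explicit form of $\iota$ above.
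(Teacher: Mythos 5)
Your argument is correct and essentially identical to the paper's, which rather than unrolling the place-by-place estimate simply cites Silverman's \emph{Arithmetic of Elliptic Curves} VIII.5.6 applied to $F=\iota$ with $m=1$, $M=3$, giving the constant $\binom{1+d}{1}H(F)=(d+1)H(F)$. The bookkeeping subtlety you flag at the end --- that $H(\iota)$ in the lemma is defined from the coefficients of $f_{1},g_{1},f_{2},g_{2}$ while your estimate (and Silverman's theorem) naturally produces the coefficient height of the Segre products $F_{k}=f_{i}g_{j}$ --- is equally present in the paper's own proof and is harmless for the downstream application, since passing from one to the other loses only a factor polynomial in $d$.
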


\begin{proof}
By Lemma \ref{ProductHeights} we have
$$H(P_{t})=H(j(E_{t}))H(j(E_{t}^{\prime})).$$
The lemma then follows from \cite[VIII, Theorem 5.6]{Sil09}
, which asserts that when there is a map of degree $d$ between two projective spaces, say 
$$\lambda: \mathbb{P}^{m} \to \mathbb{P}^{M},$$
then for all points $P \in P^{m}(\overline{\Q})$
there are positive constant $C_{1}$ and $C_{2}$ depending on $F$ such that
$$C_{1}H(P)^{d} \le H(\lambda(P)) \le C_{2}H(P)^{d}.$$

Write $\lambda=[\lambda_{0}, \cdots, \lambda_{M}]$ using homogeneous polynomials $\lambda_{i}$ having no common zeros. Let $H(\lambda)$ be the height of $\lambda$ defined by the height of the coefficients of $\lambda_{i}$. The constant $C_{1}$ and $C_{2}$ can be explicitly calculated in terms of $M$, $m$ and $H(\lambda)$. Especially, we can let $C_{1}= \binom{m+d}{m} H(\lambda).$ The lemma follows from the assumption that $\lambda=\iota$, $m=1$ and $M=3$.
 
\end{proof}

\vspace{1em}
 By Lemma \ref{covers}, for our choice of $m \in \Z $, a rational point $t \in C(K)$ with $t \in S(B)$ for some $B$ lifts to a rational point on one of the covers $C_{\tilde{H}_{\Delta}} \subset X_{\tilde{H}_{\Delta}}$ or $C_{H_{\operatorname{p}}} \subset X_{H_{\operatorname{p}}}$. We have the following proposition:

\begin{prop}\label{change of heights}

Fix $m \in \N$. Let $t \in K$ be a rational point such that $t \in S(B)$ for some $B$ and the specializations $E_t$ and $E^{\prime}_t$ above $t$ are non-CM elliptic curves. Let $P_{t}$ denote the point on $C$ parametrized by $t$, and denote by $\tilde{P_{t}}$ a lifting of $P_{t}$ to one of the covers $C_H$ in Lemma \ref{covers}. Let $H(\iota_{H}(\tilde{P_{t}}))$ denote the projective height of $\tilde{P_{t}}$ with respect to $\iota_{H}$. 

If  $H=\tilde{H}_{\Delta}$, then 

 $$H(\iota_{H}(\tilde{P_{t}})) \le (m+1)m^{24(m+1)}A^{2(m+1)}((d+1)H(\iota))^{m+2}B^{d(m+2)} .$$

 If $H=H_{\operatorname{p}}$, then 

 $$H(\iota_{H}(\tilde{P_{t}})) \le  m^{24}(d+1)^{2}H(\iota)^{2}B^{2d} .$$

\end{prop}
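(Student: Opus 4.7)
The plan is to reduce the height of $\iota_H(\tilde{P_t})$ to a product of heights of individual $j$-invariants via the Segre factorization (Lemma \ref{ProductHeights}), then control each factor using the modular isogeny bound (Corollary \ref{pazc}), and finally substitute the bound on $H(P_t)$ from Lemma \ref{5.3}.

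Consider first the case $H = H_{\operatorname{p}}$. Since $\iota_{H_{\operatorname{p}}}$ factors through the Segre embedding of $\P^1 \times \P^1 \times \P^1 \times \P^1$ with coordinates $j(E_t), j(\tilde{E}_t), j(E_t'), j(\tilde{E}_t')$, Lemma \ref{ProductHeights} gives
\begin{equation*}
H(\iota_{H_{\operatorname{p}}}(\tilde{P_t})) = H(j(E_t))\cdot H(j(\tilde{E}_t))\cdot H(j(E_t'))\cdot H(j(\tilde{E}_t')).
\end{equation*}
Because $\tilde{E}_t$ (resp.\ $\tilde{E}_t'$) is linked to $E_t$ (resp.\ $E_t'$) by a cyclic $m$-isogeny, Corollary \ref{pazc} gives $H(j(\tilde{E}_t)) \lesssim m^{12} H(j(E_t))$ and similarly for the primed curve. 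Thus the product is bounded (up to an absolute constant absorbed into the Pazuki constant $A$) by $m^{24} H(P_t)^2$, and applying Lemma \ref{5.3} yields the asserted bound $m^{24}(d+1)^2 H(\iota)^2 B^{2d}$.

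The $H = \tilde{H}_\Delta$ case is analogous but the new ingredient is bounding the height of $F$, which is a sum of $m{+}1$ products of $j$-invariants of $m$-isogenous curves. Again by Lemma \ref{ProductHeights} applied to the Segre embedding of three copies of $\P^1$,
\begin{equation*}
H(\iota_{\tilde{H}_\Delta}(\tilde{P_t})) = H(F)\cdot H(j(E_t))\cdot H(j(E_t')) = H(F)\cdot H(P_t).
\end{equation*}
For each $i$, by Corollary \ref{pazc} we have $H(j(E_{t,i})) \le A m^{12} H(j(E_t))$ and $H(j(E_{t,i}'))\le A m^{12} H(j(E_t'))$, so $H(j(E_{t,i})j(E_{t,i}')) \le A^2 m^{24} H(P_t)$. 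The standard subadditivity $H(x_1+\cdots+x_n) \le n\prod_i H(x_i)$ then gives
\begin{equation*}
H(F) \;\le\; (m+1)\prod_{i=1}^{m+1} H\!\bigl(j(E_{t,i})j(E_{t,i}')\bigr) \;\le\; (m+1)\,A^{2(m+1)} m^{24(m+1)}\,H(P_t)^{m+1}.
\end{equation*}
Multiplying through by $H(P_t)$ and substituting $H(P_t)\le (d+1)H(\iota)B^d$ from Lemma \ref{5.3} produces the stated bound.

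The only nontrivial step is the sum estimate for $F$: one must be careful that the $m{+}1$ terms each contribute multiplicatively (because height is submultiplicative but only subadditive in a multiplicative sense). The exponential-in-$m$ growth of the resulting bound is precisely why, later in $\S 6$, the parameter $m$ will be chosen of logarithmic size in $B$ so that the total contribution $m^{O(m)}H(P_t)^{m+2}$ remains polylogarithmically controlled. Everything else is bookkeeping: combining Lemma \ref{ProductHeights}, Corollary \ref{pazc}, and Lemma \ref{5.3} in the correct order for each of the two covers.
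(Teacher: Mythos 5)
Your proof is correct and follows essentially the same approach as the paper: Lemma \ref{ProductHeights} to reduce the Segre height to a product of $j$-invariant heights, Corollary \ref{pazc} to control each $j$-invariant in terms of $H(j(E_t))$ and $H(j(E_t'))$, the standard sub-multiplicativity/sub-additivity inequalities for $H$, and finally Lemma \ref{5.3} to bound $H(P_t)$. The only cosmetic difference is that the paper introduces a maximizing index $i$ before invoking the subadditivity bound for $F$, whereas you bound each summand directly by $A^2 m^{24} H(P_t)$; both lead to the same estimate.
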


\begin{proof}
	\textbf{Case 1: $P_{t}$ lifts to $C_{\tilde{H}_{\Delta}}(K)$}

	As noted above, working on the quotient $C_{\tilde{H}_{\Delta}}$ we have $\iota_{\tilde{H}_{\Delta}}$ embeds $C_{\tilde{H}_{\Delta}}$ into $\P^{7}$ by composing $\iota^{\prime}_{\tilde{H}_{\Delta}}$ with the Segre embedding. A point $\tilde{P_{t}}=(E_{t},E_{t}^{\prime}, \psi: E_{t}[m] \stackrel{\sim}{\rightarrow} E_{t}^{\prime}[m])$ that is a lift of $P_{t}=(E_{t},E_{t}^{\prime})$ embedded into $\P^{7}$ as following:
	
	$$(E_{t},E_{t}^{\prime}, \psi) \to  F \times j(E_{t}) \times j(E_{t}^{\prime}) \hookrightarrow [Fj(E_{t})j(E_{t}^{\prime}), Fj(E_{t}), Fj(E_{t}^{\prime}),\cdots, 1].$$
	
	Since our ultimate goal is to count rational points parametrized by $t \in K$, we need a formula relating the height of $\iota_{H}(\tilde{P_{t}})$ with heights of $F$, $j(E_{t})$ and $j(E_{t}^{\prime})$. By Lemma \ref{ProductHeights}, we have
	\begin{equation}
	\label{3}
		H(\iota_{H}(\tilde{P_{t}}))=H(F)H(j(E_{t}))H(j(E_{t}^{\prime})).
	\end{equation}
	Let $i$ be an integer between $1$ and $m+1$ such that 
	$$ H(j(E_{t,i})j(E_{t,i}^{\prime}))= \max_{1 \le k \le m+1} H(j(E_{t,k})j(E_{t,k}^{\prime})).$$
	  By Definition \ref{F} and Corollary \ref{pazc} and Lemma \ref{5.3}, together with the fact that for any $\alpha, \beta, \alpha_{1} \cdots \alpha_{r} \in \overline{\Q}$,
	  $$H(\alpha\beta) \le H(\alpha)H(\beta)$$
	  and $$H(\alpha_{1}+\cdots\alpha_{r}) \le rH(\alpha_{1})\cdots H(\alpha_{r}),$$
	  we have 
	
	\begin{align}
	H(F) & =H(j(E_{t,1})j(E^{\prime}_{t,1})+ \cdots +j(E_{t, m+1})j(E^{\prime}_{t, m+1})) \\
	& \le (m+1) H(j(E_{t,i}))^{m+1}H(j(E_{t,i}^{\prime}))^{m+1} \\
	& \le (m+1)m^{24(m+1)}A^{2(m+1)}H(j(E_{t}))^{m+1}H(j(E_{t}^{\prime}))^{m+1}  \\
	& \le (m+1)m^{24(m+1)}A^{2(m+1)}((d+1)H(\iota))^{m+1}B^{d(m+1)} .
\end{align}
The constant $A$ comes from Corollary \ref{pazc}, and the first part of the lemma follows from $(\ref{3})$.

\vspace{1em}
\noindent \textbf{Case 2: $P_{t}$ lifts to one of the maximal parabolic quotient surfaces}	

Recall the definition of $H_{\operatorname{p}}$ (\ref{Hprod}) and $\iota_{H_{\operatorname{p}}}$ (\ref{iotaHp}). As in the previous case, Lemma \ref{ProductHeights} implies that 
 
 $$H(\iota_{H_{\operatorname{p}}}(\tilde{P_{t}}))=H(j(E_{t}))H(j(\tilde{E_{t}}))H(j(E_{t}^{\prime}))H(j(\tilde{E_{t}^{\prime}}))$$
 By Corollary \ref{pazc} and Lemma \ref{5.3}, where $\tilde{E}_{t}$ is $m$-isogenous to $E_{t}$ and $\tilde{E}^{\prime}_{t}$ is $m$-isogenous to $E_{t}^{\prime}$, we have 

$$H(\iota_{H_{\operatorname{p}}}(\tilde{P_{t}})) \ll m^{24}(d+1)^{2}H(\iota)^{2}B^{2d} .$$

\end{proof}

\section{Proof of the Main Theorems}
 \subsection{Proof of Theorem \ref{main}}
The previous sections show that for a rational point $P_{t}$ on $C$, we have two types of possible liftings to some modular surfaces with $m$-level structures. Accordingly, we divide the proof of Theorem \ref{main} into two parts and analyze each part's contribution to $\vert  S(B)\vert $. We make optimization of $m$ in terms of the height $B$ of $t$ as.

\textbf{Case 1: Contributions from modular diagonal quotient surfaces. }

Recall that we have the following commutative diagram: 
		$$\begin{tikzcd}
   C_{\tilde{H}_{\Delta}}  \arrow[r, hook, "\iota_{\tilde{H}_{\Delta}}^{\prime}"] \arrow[d, "q", swap] & \P^{1} \times \P^{1} \times \P^{1}  \arrow[r, hook, "\text{Segre}"] \arrow[d] & \P^{7}  \\
C \arrow[r, hook] & \P^{1} \times \P^{1} \arrow[r, hook] & \P^{3}  	
\end{tikzcd}$$

Let $\iota_{\tilde{H}_{\Delta}}$ be the composition of $\iota^{\prime}_{\tilde{H}_{\Delta}}$ with the Segre embedding. In order to apply Theorem \ref{PS}, we bound the degree of $\iota_{\tilde{H}_{\Delta}}$, which depends on $m$ and the projective degree of $C$.

Let $\deg_{C_{\tilde{H}_{\Delta}}}(F)$ be the degree of $F$ as a function on $C_{\tilde{H}_{\Delta}}$. The degree of the function $\iota_{\tilde{H}_{\Delta}}^{\prime}$ on $C_{\tilde{H}_{\Delta}}$ can be viewed as a tridegree, which we denote by $(\deg_{C_{\tilde{H}_{\Delta}}}(F),e, e^{\prime})$. When we pass to $\P^{7}$ by compose with the Segre embedding, we have $$\iota_{\tilde{H}_{\Delta}}=\deg_{C_{\tilde{H}_\Delta}}(F)+e+e^{\prime}.$$ Here $e$ denotes the degree of the function $j(E_{t})$ on $C_{\tilde{H}_{\Delta}}$ and $e^{\prime}$ is the degree of $j(E^{\prime}_{t})$ on  $C_{\tilde{H}_{\Delta}}$. Let $\alpha$ be the degree of the cover $q$ and let $d_{E}$ be the degree of the $j$-invariant map $C \to \P^{1}$. Therefore $e=\alpha d_{E}$. Similarly, we have $e^{\prime}=\alpha d_{E^{\prime}}.$ Therefore $$\deg(\iota_{\tilde{H}_{\Delta}}) > e+e^{\prime}=\alpha(d_{E}+d_{E^{\prime}})=\alpha d.$$    

The degree of $q$ is equal to the index of $\tilde{H}_{\Delta}$ inside the Galois group $G=SL_{2}(\Z/m\Z) \times SL_{2}(\Z/m\Z)$. We have 

$$\alpha = [G:\tilde{H}_{\Delta}] = m^{3}(1-\frac{1}{m^{2}})=\frac{1}{2}m(m+1)(m-1).$$	
Therefore
$$\operatorname{deg}(\iota_{\tilde{H}_{\Delta}}) \ge \frac{1}{2}m(m+1)(m-1)d.$$

In order to get an upper bound of $\deg(\iota_{\tilde{H}_{\Delta}})$, we need an upper bound for the degree of $F$ over $C_{\tilde{H}_{\Delta}}$. Recall from diagram \ref{pullback}, $C^{\prime}$ is the Galois closure of $C_{\tilde{H}_{\Delta}}$ over $C$ and the function $F$ is defined to be 

$$F=j(E_{t,1})j(E^{\prime}_{t,1})+ \cdots +j(E_{t, m+1})j(E^{\prime}_{t, m+1}).$$

The individual terms $j(E_{t,i})$ and $j(E^{\prime}_{t,i})$ are defined on $C^{\prime}$, instead of on $C_{\tilde{H}_{\Delta}}$. A point on $C^{\prime}$ is a product of triples $(E_{t}, P, N) \times (E^{\prime}_{t}, P^{\prime}, N^{\prime})$, where $P$(resp. $P^{\prime}$) is a point of $E_{t}[m]$(resp. $E_{t}^{\prime}[m]$) and $N$(resp. $N^{\prime}$) is a cyclic subgroup of $E_{t}[m]$(resp. $E_{t}^{\prime}[m]$). Fix a $j$-invariant $x$, the degree of $j(E_{t,i})$(resp. $j(E^{\prime}_{t,i})$) is the number of points on $C^{\prime}$ such that $j(E_{t}/N)=x$. As long as $E_t$ and $E_{t}^{\prime})$ is not CM, there are $m+1$ $j$-invariants which are $m$-isogenous to $x$ and there are $d_{E}$ points on $C$ mapping to each of those $(m+1)$ $j$-invariants. Hence the degree $\deg_{C_{\tilde{H}_\Delta}}(j(E_{t,i}))$(resp. $\deg_{C_{\tilde{H}_\Delta}}(j(E^{\prime}_{t,i}))$) for each $1 \le i \le m+1$ is $(m+1)d_{E}$(resp. $(m+1)d_{E^{\prime}}$).

The argument above, together with the fact that if $f$ and $g$ are functions on a curve $X$ then
$$\deg(f+g) \le \deg(f) + \deg(g)$$
and $$\deg(fg) \le \deg(f) +\deg(g),$$
yields that

$$\deg_{C_{\tilde{H}_{\Delta}}}(F) \le \deg_{C^{\prime}}(F) \le (m+1)^{2}d. $$
Hence we get an upper bound on the degree of $\iota_{\tilde{H}_{\Delta}}$ which is 
$$\operatorname{deg}(\iota_{\tilde{H}_{\Delta}}) \le (m(m+1)(m-1) + (m+1)^{2})d.$$ 

We state the above
result in a lemma:

\begin{lem}\label{bound of the degree of iota}
	$$\frac{1}{2}m(m+1)(m-1)d \le \deg(\iota_{\tilde{H}_{\Delta}}) \le (m(m+1)(m-1) + (m+1)^{2})d.$$
\end{lem}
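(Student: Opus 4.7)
The strategy is to exploit the tridegree structure of the generic embedding $\iota'_{\tilde{H}_\Delta}\colon C_{\tilde{H}_\Delta} \to \P^1 \times \P^1 \times \P^1$ whose three components are $F$, $j(E_t)$, and $j(E'_t)$. Since post-composing with the Segre embedding $\P^1\times\P^1\times\P^1 \hookrightarrow \P^7$ takes a curve of tridegree $(a,b,c)$ to one of total degree $a+b+c$, the problem reduces to bounding $\deg_{C_{\tilde{H}_\Delta}}(F)$ and $\deg_{C_{\tilde{H}_\Delta}}(j(E_t)) + \deg_{C_{\tilde{H}_\Delta}}(j(E'_t))$ separately. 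The plan is to compute the latter exactly (which yields the lower bound) and to bound the former from above by pulling back to the Galois closure $C'$ from diagram \ref{pullback}.

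For the $j$-invariant pieces, I would observe that $j(E_t)$ and $j(E'_t)$ factor through the covering map $q\colon C_{\tilde{H}_\Delta} \to C$, so their degrees on $C_{\tilde{H}_\Delta}$ equal $\alpha d_E$ and $\alpha d_{E'}$ respectively, where $\alpha = \deg(q)$. The degree $\alpha$ equals the index $[SL_2(\Z/m\Z)\times SL_2(\Z/m\Z) : \tilde{H}_\Delta] = m(m+1)(m-1)$, and by definition of $\iota$ one has $d = d_E + d_{E'}$. This already forces
$$\deg(\iota_{\tilde{H}_\Delta}) \;\ge\; \alpha d_E + \alpha d_{E'} \;=\; m(m+1)(m-1)\,d,$$
giving the lower bound.

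For the upper bound on $\deg_{C_{\tilde{H}_\Delta}}(F)$, the subtle point, which I regard as the main obstacle, is that the individual summands $j(E_{t,i})$ and $j(E'_{t,i})$ are \emph{not} naturally defined on $C_{\tilde{H}_\Delta}$: the labelling of the $m+1$ cyclic $m$-isogenies requires the extra data parametrized by $C'$. I would therefore pass to $C'$, where each $j(E_{t,i})$ becomes an honest function, and bound degrees there. For a fixed $j$-invariant $x$, there are $m+1$ values of $j$ that are $m$-isogenous to $x$, and each pulls back to $d_E$ points on $C$, so $\deg_{C'}(j(E_{t,i})) = (m+1)d_E$ and similarly $\deg_{C'}(j(E'_{t,i})) = (m+1)d_{E'}$. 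Since $F$ is $\tilde{H}_\Delta$-invariant and descends to $C_{\tilde{H}_\Delta}$, one has $\deg_{C_{\tilde{H}_\Delta}}(F) \le \deg_{C'}(F)$, and applying the standard subadditivities $\deg(fg)\le\deg(f)+\deg(g)$ and $\deg(\sum f_i)\le \sum \deg(f_i)$ term by term yields
$$\deg_{C_{\tilde{H}_\Delta}}(F) \;\le\; (m+1)\bigl((m+1)d_E + (m+1)d_{E'}\bigr) \;=\; (m+1)^2 d.$$

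Adding this to the exact value $\alpha d$ computed above produces
$$\deg(\iota_{\tilde{H}_\Delta}) \;\le\; \bigl(m(m+1)(m-1) + (m+1)^2\bigr)d,$$
completing the argument. The only genuinely non-bookkeeping step is the degree count for $j(E_{t,i})$ on $C'$, which rests on the non-CM assumption (so that the $m+1$ $m$-isogenous $j$-invariants are truly distinct) and on the fact that $C'$ was constructed precisely so that the labelling of cyclic subgroups of $E_t[m]$ and $E'_t[m]$ is well-defined.
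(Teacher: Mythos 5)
Your proof is essentially the same as the paper's: both decompose the degree of $\iota_{\tilde H_\Delta}$ into the tridegree $(\deg F, e, e')$ after the Segre embedding, obtain the lower bound from $e + e' = \alpha(d_E + d_{E'}) = \alpha d$ with $\alpha = [G:\tilde H_\Delta]$, and bound $\deg_{C_{\tilde H_\Delta}}(F)$ above by passing to the Galois closure $C'$, computing $\deg_{C'}(j(E_{t,i})) = (m+1)d_E$ via the $m$\nobreakdash-isogeny correspondence, and applying subadditivity of degree under sums and products. Your identification of where the labelling of the summands $j(E_{t,i})$ lives (on $C'$ rather than on $C_{\tilde H_\Delta}$) and your use of the inequality $\deg_{C_{\tilde H_\Delta}}(F) \le \deg_{C'}(F)$ match the paper's argument exactly.
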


\vspace{1em}
Let $S_{B, m, H_{\Delta}}$ be the set of rational points on $C_{\tilde{H}_{\Delta}}(K)$ which are liftings of $P_{t}$ for some $t \in S(B)$. Recall that we prove an upper bound for the heights of points in $S_{B, m, H_{\Delta}}$ in Proposition \ref{change of heights}. Theorem \ref{PS} (\cite[Theorem 1.8]{PS22}) then applies, along with Lemma \ref{connected} and Lemma \ref{bound of the degree of iota}, yielding
\begin{align}
	|S_{B, m, H_{\Delta}}| & \lesssim_{K} ((\alpha + (m+1)^{2}) d)^{4}((m+1)m^{24(m+1)}A^{2(m+1)}((d+1)H(\iota))^{m+2}B^{d(m+2)})^{\frac{2d_{K}}{\alpha d}}  \\
	 & \lesssim_{K} ((m^{2}+1)(m+1)d)^{4}((m+1)m^{24(m+1)}A^{2(m+1)}((d+1)H(\iota))^{m+2}B^{d(m+2)})^{\frac{2d_{K}}{m(m-1)(m+1) d}} \\
	 & \lesssim_{K} (m^{3}d)^{4}(m+1)^{\frac{2d_K}{m(m-1)(m+1) d}}A^{\frac{4d_{K}}{m(m-1)d}}m^{\frac{48d_{K}}{m(m-1)d}}((d+1)H(\iota)B)^{\frac{2d_{K}(m+2)}{m(m-1)(m+1)}}\label{6.3}
	 \end{align}

     Let $M_{1}$ be the absolute integer defined in Theorem \ref{Bakker and Tsimerman}. Let $m \ge M^{\prime}=\operatorname{max}\{17, M_{1}\}$. 
	 The terms in (\ref{6.3}) other than $(m^{3}d)^{4}$ are bounded above by an absolute constant. The argument requires optimizations on the choice of $m$, which we will prove in the following lemma.
	 
	 \begin{lem}\label{optimization}
	 For a fixed $B$, let $m$ be a prime between $2(\log (d+1) + \log H(\iota) + \log B)^{\frac{1}{2}}$ and $4(\log (d+1) + \log H(\iota) + \log B)^{\frac{1}{2}}$. Then there is an absolute constant $A_{0}$ such that 
	 $$(m+1)^{\frac{2d_{K}}{m(m-1)(m+1) d}}A^{\frac{4d_{K}}{m(m-1)d}}m^{\frac{48d_{K}}{m(m-1)d}}((d+1)H(\iota)B)^{\frac{2d_{K}(m+2)}{m(m-1)(m+1)}} \le e^{A_{0}d_K}.$$	
	 \end{lem}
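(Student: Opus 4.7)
The plan is to take logarithms of the left-hand side and show that each resulting additive contribution is absolutely bounded. Set
\[
L := \log(d+1)+\log H(\iota)+\log B,
\]
so that the hypothesis $2L^{1/2}\le m\le 4L^{1/2}$ is equivalent to $L\le m^2/4$. Writing $T$ for the logarithm of the left-hand side, we decompose $T = T_1+T_2+T_3+T_4$ where
\[
T_1 = \frac{2d_K\log(m+1)}{m(m-1)(m+1)\,d},\qquad T_2 = \frac{4d_K\log A}{m(m-1)\,d},
\]
\[
T_3 = \frac{48\,d_K\log m}{m(m-1)\,d},\qquad T_4 = \frac{2d_K(m+2)\,L}{m(m-1)(m+1)}.
\]

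The first three terms are easy. Using $d\ge 1$ and $m\ge M'\ge 17$, each of $T_1,T_2,T_3$ has the form $O_{d_K}\!\bigl((\log m)/m^{2}\bigr)$ and is trivially bounded by an absolute constant uniformly in $B$, $H(\iota)$, and $d$.

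The main step is bounding $T_4$. Substituting $L\le m^2/4$ gives
\[
T_4 \;\le\; \frac{d_K\,m(m+2)}{2(m-1)(m+1)} \;=\; \frac{d_K(m^2+2m)}{2(m^2-1)},
\]
and the right-hand side tends to $d_K/2$ as $m\to\infty$; for $m\ge 17$ it is manifestly bounded by an absolute multiple of $d_K$ (the function is monotone decreasing with limit $1$). Exponentiating $T$ then yields the lemma with $A_0 = e^{c\,d_K}$ for an absolute $c>0$. I do not expect any real obstacle: the calibration of $m$ in Section~3 was designed precisely for this cancellation, namely that the exponent $(m+2)/[m(m-1)(m+1)]\asymp m^{-2}$ in $T_4$ neutralises the single factor of $L$ when $m\asymp L^{1/2}$, while the prefactor terms $T_1,T_2,T_3$ decay at an even faster rate. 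The lemma is therefore essentially a verification that the chosen range of $m$ makes the bookkeeping work.
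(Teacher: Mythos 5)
Your proposal is correct and takes essentially the same approach as the paper: both take logarithms (or equivalently keep things in exponential form), observe that the three prefactor exponents involving $\log(m+1)$, $\log A$, and $\log m$ decay like $m^{-2}\log m$ and are trivially bounded, and then use the calibration $m\asymp L^{1/2}$ (i.e.\ $L\le m^2/4$) to neutralize the $L$ in the main exponent. The only slip is cosmetic: the quantity $\tfrac{m(m+2)}{2(m^2-1)}$ has limit $1/2$ rather than $1$ as $m\to\infty$, but boundedness is all that is used, so the argument stands.
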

	 
  \begin{proof}
  Once we write $$(m+1)^{\frac{2d_{K}}{m(m-1)(m+1) d}}$$ as $$e^{(\frac{2d_{K}}{m(m-1)(m+1) d})\log (m+1)},$$ it is easy to see that for $m \ge 2$ we have
$$(m+1)^{\frac{2d_{K}}{m(m-1)(m+1) d}} \ll e^{\frac{2d_{K}}{d}} \le e^{2d_{K}}.$$
This is because $\frac{\log (m+1)}{m(m-1)(m+1)} $ is bounded above by $1$.
A similar argument shows that
$$m^{\frac{48d_{K}}{m(m-1)d}} \ll e^{\frac{48d_{K}}{d}} \le e^{48d_{K}}$$ which also contributes as a constant independent of $m$ and $d$.

It is left to consider $((d+1)H(\iota)B)^{\frac{2d_{K}(m+2)}{m(m-1)(m+1)}}$ which plays an important role in the optimization process. We make the optimization by choosing suitable $m$ in terms of $B$, $d$, and $H(\iota)$. The following inequalities, together with Proposition \ref{choice of m}, allows one to take $m$ to be any prime between $2(\log (d+1) + \log H(\iota) + \log B)^{\frac{1}{2}}$ and $4(\log (d+1) + \log H(\iota) + \log B)^{\frac{1}{2}}$, so that

\begin{align*}
((d+1)H(\iota)B)^{\frac{2d_{K}(m+2)}{m(m-1)(m+1)}} &=e^{{\frac{2d_{K}(m+2)}{m(m-1)(m+1)}}(\log (d+1) + \log H(\iota) + \log B)} \\
&= e^{\frac{2d_{K} (\log (d+1) + \log H(\iota) + \log B)}{(m-1)(m+1)}} \cdot e^{\frac{4d_{K} (\log (d+1) + \log H(\iota) + \log B)}{m(m-1)(m+1)}} \\
&\le e^{N_{1}d_{K}} \cdot e^{N_{2}d_{K}}
\end{align*}
where $\frac{2 (\log (d+1) + \log H(\iota) + \log B)}{(m-1)(m+1)d}$ and $\frac{4 (\log (d+1) + \log H(\iota) + \log B)}{m(m-1)(m+1)d}$ are bounded above by some absolute constant $N_{1}$ and $N_{2}$. 

  \end{proof}
  
 The conditions on $m$ imply that we need to assume $4(\log (d+1) + \log H(\iota) + \log B)^{\frac{1}{2}} \ge M^{\prime}$ and thus we can always assume that $B$ is greater than the absolute integer $M=e^{(\frac{M^{\prime}}{2})^2}$ to make everything go through.
 
We have the following proposition as a conclusion of the case.
\begin{prop}\label{1}
The number of points in $|S(B)|$ that comes from the modular diagonal quotient surface is bounded above by 
\begin{align*}
	|S_{B, m, H_{\Delta}}| &\lesssim_{K} d^{4}((\log (d+1) + \log H(\iota) + \log B))^{6} \\
	&\lesssim_{K} d^{4+\epsilon}( \log H(\iota) + \log B)^{6} .
\end{align*} 
	
\end{prop}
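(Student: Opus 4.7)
The plan is to apply the Paredes--Sasyk uniform estimate (Theorem \ref{PS}) to the integral curve $C_{\tilde{H}_{\Delta}} \hookrightarrow \P^{7}$, feeding in the height bound from Proposition \ref{change of heights} and the degree bound from Lemma \ref{bound of the degree of iota}, and then optimizing the auxiliary prime $m$ via Lemma \ref{optimization}. Integrality of $C_{\tilde{H}_{\Delta}}$ is guaranteed by Lemma \ref{connected}, so Theorem \ref{PS} applies directly: substituting $\deg(\iota_{\tilde{H}_{\Delta}}) \le (m(m+1)(m-1) + (m+1)^{2})d$ and the height bound on lifts of points in $S(B)$ into $\lesssim_{K} \deg^{4} B^{2d_{K}/\deg}$ yields precisely the product expression displayed in (\ref{6.3}), which factors as the main term $(m^{3}d)^{4}$ times the auxiliary quantity controlled in Lemma \ref{optimization}.

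The next step is the choice of $m$. I would take $m$ to be a prime in the interval $[2(\log(d+1)+\log H(\iota)+\log B)^{1/2},\, 4(\log(d+1)+\log H(\iota)+\log B)^{1/2}]$; such a prime exists by Bertrand's postulate, which is exactly what dictates the hypothesis $B \ge M$ in Theorem \ref{main}. For this $m$, Lemma \ref{optimization} shows that the auxiliary factor in (\ref{6.3}) is bounded by a constant depending only on $K$, while $m^{12} \lesssim (\log(d+1)+\log H(\iota)+\log B)^{6}$, so $(m^{3}d)^{4} \lesssim d^{4}(\log(d+1)+\log H(\iota)+\log B)^{6}$. This establishes the first displayed inequality.

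To upgrade to the cleaner bound $d^{4+\epsilon}(\log H(\iota)+\log B)^{6}$, I would split the logarithmic factor using $(a+b)^{6} \le 2^{5}(a^{6}+b^{6})$ with $a=\log(d+1)$ and $b=\log H(\iota)+\log B$. The piece $d^{4}(\log(d+1))^{6}$ is absorbed into $d^{4+\epsilon}$ via $\log x \ll_{\epsilon} x^{\epsilon/6}$, and then $d^{4+\epsilon}$ is in turn subsumed into $d^{4+\epsilon}(\log H(\iota)+\log B)^{6}$ since $B \ge M$ forces $\log B$ to exceed a positive absolute constant; the remaining piece $d^{4}(\log H(\iota)+\log B)^{6} \le d^{4+\epsilon}(\log H(\iota)+\log B)^{6}$ is already of the desired form. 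No step presents a substantive obstacle at this point: the genuinely difficult work — the integrality of $C_{\tilde{H}_{\Delta}}$, the height change estimate, and the degree bound — has already been carried out in the preceding sections, so what remains is essentially a bookkeeping exercise that assembles them together with Bertrand's postulate and an elementary logarithmic inequality.
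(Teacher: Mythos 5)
Your proposal matches the paper's proof: it assembles inequality (\ref{6.3}) from Theorem \ref{PS}, Lemma \ref{connected}, Lemma \ref{bound of the degree of iota}, and Proposition \ref{change of heights}, then invokes Lemma \ref{optimization} with the same choice of $m \asymp (\log(d+1)+\log H(\iota)+\log B)^{1/2}$ to extract $(m^3 d)^4 \lesssim d^4(\log(d+1)+\log H(\iota)+\log B)^6$. Your elementary splitting $(a+b)^6 \le 2^5(a^6+b^6)$ with Bertrand's postulate for the existence of $m$ makes explicit some bookkeeping the paper leaves implicit, but the route is the same.
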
  
  \begin{proof} 
  The proposition follows from the inequality (\ref{6.3}) and Lemma \ref{optimization}.
\end{proof}

\vspace{2em}	
\noindent \textbf{Case 2: Contributions from maximal parabolic quotient surfaces. }
Recall that in this case, we have the following commutative diagram.
$$\begin{tikzcd}
   C_{H_{\operatorname{p}}}  \arrow[r, hook, "\iota^{\prime}_{{H}_{\operatorname{p}}}"] \arrow[d, "q", swap]  & \P^{1} \times \P^{1} \times \P^{1} \times \P^{1}  \arrow[r, hook, "\text{Segre}"] \arrow[d] & \P^{15}   \\
C  \arrow[r, hook] & \P^{1} \times \P^{1} \arrow[r, hook] & \P^{3} 	
\end{tikzcd}$$

The degree of the covering space $q$, denoted by $\beta$, is equal to the index of $H_{\operatorname{p}}$ inside the Galois group $G$, similar to the previous case. The index of $H_{\operatorname{p}}$ as a product of maximal parabolic subgroups is equal to 

$$\beta = [G:H_{\operatorname{p}}] = \frac{1}{4}(m+1)^{2}.$$
One notice that the degree of $\iota_{H_{\operatorname{p}}}$ satisfies $\operatorname{deg}(\iota_{{H}_{\operatorname{p}}})=\beta d$. Let $S_{B, m, H_{\operatorname{p}}}$ be the set of rational points in $C_{H_{\operatorname{p}}}(K)$ that lift $P_{t}$ for some $t \in S(B)$. Applying Proposition \ref{change of heights} and Theorem \ref{PS}, we get the following inequality

$$|S_{B, m, H_{\operatorname{p}}}| \lesssim_{K} ((m+1)^{2}d)^{4}(m^{24}(d+1)^{2}H(\iota)^{2}B^{2d})^{\frac{8d_{K}}{(m+1)^{2}d}}$$
which allows us to make the same optimization as in case 1. By taking $$m \sim (\log B + \log (d+1) + \log H(\iota))^{\frac{1}{2}},$$ we get an upper bound of the total contribution to $\vert   S(B)\vert $ from the maximal parabolic surfaces: 

\begin{prop}\label{2}

	\begin{align*}
		|S_{B, m, H_{\operatorname{p}}}| &\lesssim_{K} d^{4}(\log B + \log (d+1) + \log H(\iota))^{4} \\
&\lesssim_{K} d^{4+\epsilon}(\log B  + \log H(\iota))^{4}.
	\end{align*}

\end{prop}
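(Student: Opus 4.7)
The plan is to follow the same optimization strategy as in the proof of Proposition \ref{1}, with the starting inequality adapted to the maximal parabolic case. First I would establish the setup inequality
$$S_{B, m, H_{\operatorname{p}}}  \lesssim_{K} ((m+1)^{2}d)^{4}\left(m^{24}(d+1)^{2}H(\iota)^{2}B^{2d}\right)^{\frac{8d_{K}}{(m+1)^{2}d}}$$
by combining Theorem \ref{PS} applied to the integral curve $C_{H_{\operatorname{p}}} \subset \P^{15}$ (integrality supplied by Lemma \ref{connected}), the degree computation $\deg(\iota_{H_{\operatorname{p}}}) = \beta d = \frac{(m+1)^{2}}{4} d$ already in the text, and the height bound $H(\iota_{H_{\operatorname{p}}}(\tilde P_{t})) \le m^{24}(d+1)^{2} H(\iota)^{2} B^{2d}$ from Proposition \ref{change of heights}.

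Next I would optimize in $m$. Set $L := \log(d+1) + \log H(\iota) + \log B$ and pick $m$ to be a prime in $[2L^{1/2}, 4L^{1/2}]$; this is legitimate for $B \ge M$ by Bertrand's postulate together with the hypothesis from Section 3 that forces $L^{1/2}$ to exceed $M'/4$. To show the second factor of the setup inequality is bounded by a constant depending only on $d_K$, I would write $a^{b} = e^{b \log a}$ and distribute the exponent, reducing the task to bounding each of
$$\frac{192 d_K \log m}{(m+1)^{2} d}, \quad \frac{16 d_K \log(d+1)}{(m+1)^{2} d}, \quad \frac{16 d_K \log H(\iota)}{(m+1)^{2} d}, \quad \frac{16 d_K \log B}{(m+1)^{2}}$$
by an absolute constant times $d_K$. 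Each bound is immediate from $m^{2} \gtrsim L$ together with $\log(d+1), \log H(\iota), \log B \le L$, noting $\log m \le \log L \le L$.

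Combining this with $((m+1)^{2}d)^{4} \lesssim L^{4} d^{4}$ yields
$$S_{B, m, H_{\operatorname{p}}} \lesssim_{K} d^{4}\bigl(\log B + \log(d+1) + \log H(\iota)\bigr)^{4},$$
which is the first inequality of the proposition. For the second inequality, absorb the factor $(\log(d+1))^{4}$ into $d^{\epsilon}$, valid for any fixed $\epsilon > 0$ (and for $d$ above an absolute threshold, which accounts for the implicit constant).

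The only real obstacle is bookkeeping: one must verify that the choice $m \sim L^{1/2}$ keeps every exponential factor bounded. This calculation is a cleaner analogue of Lemma \ref{optimization}, cleaner in particular because the height estimate in Proposition \ref{change of heights} for the parabolic case avoids the $m^{m+1}$-type factors present in the diagonal case, so no substantively new ideas are needed beyond those already deployed in the proof of Proposition \ref{1}.
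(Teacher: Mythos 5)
Your proposal is correct and takes essentially the same route as the paper, which is quite terse for this proposition (it simply asserts the setup inequality and says ``the same optimization as in case 1'' applies). You supply the missing optimization details, and these check out: after distributing the exponent $\frac{8d_K}{(m+1)^2 d}$, the factor $d$ in $B^{2d}$ cancels to leave $\frac{16 d_K \log B}{(m+1)^2}$, and each of your four exponents is indeed $\lesssim d_K$ once $m \sim L^{1/2}$ with $L = \log(d+1) + \log H(\iota) + \log B$. Two small observations worth noting. First, the paper's text cites Theorem~\ref{CCDN} here, but the displayed exponent $\frac{8d_K}{(m+1)^2 d}$ plainly comes from Theorem~\ref{PS}; your invocation of Theorem~\ref{PS} is the self-consistent one. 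Second, for the absorption of $(\log(d+1))^4$ into $d^\epsilon$ in the final line, the justification relies on $\log B \geq \log M > 0$ (so that $\log H(\iota) + \log B$ is bounded below by an absolute positive constant), which you implicitly use via the hypothesis $B \geq M$; it would be worth stating this explicitly.
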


 \begin{proof}[Proof of Theorem \ref{main}]
 
 It is easy to see that the contribution from case $1$ dominates that of case 2. Theorem \ref{main} then follows from Proposition \ref{1} and \ref{2}.

 \end{proof}
	
\subsection{Proof of Theorem \ref{uniform version}}

Recall that in Theorem \ref{main}, $H(t)$ is defined as the height of $t$ as an element of $K$. Instead, we calculate the height of $t$ as the height of a point on $X(1) \times X(1)$, which we have been calling $H(P_{t})$. Assume that $H(P_t) \le  B$. In this section, we prove an \textit{uniform} bound on the number of points $t$ such that $E_t$ and $E^{\prime}_t$ are geometrically isogenous, which only depends on $K$, $B$, and the \textit{degree} of the parametrize family.

We need a slightly modified version of Proposition \ref{change of heights}, which we state as a corollary.
\begin{cor}\label{corollary of change of heights}
	Fix $m \in \N$. Let $t \in K$ be a rational point such that $t \in S(B)$ for some $B$. Let $P_{t}$ denote the point on $C$ parametrized by $t$, and denote by $\tilde{P_{t}}$ a lifting of $P_{t}$ to one of the covers $C_H$ in Lemma \ref{covers}. Let $H(\iota_{H}(\tilde{P_{t}}))$ denote the projective height of $\tilde{P_{t}}$ with respect to $\iota_{H}$. 

If $H=\tilde{H}_{\Delta}$, then 

 $$H(\iota_{H}(\tilde{P_{t}})) \le (m+1)m^{24(m+1)}A^{2(m+1)}B^{(m+2)} .$$

 If $H=H_{\operatorname{p}}$, then 

 $$H(\iota_{H}(\tilde{P_{t}})) \le  m^{24}B^{2} .$$

\end{cor}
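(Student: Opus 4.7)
The plan is to imitate the proof of Proposition \ref{change of heights} verbatim, with one simplification: under the hypothesis $H(P_t) \le B$ we no longer need to invoke Lemma \ref{5.3} to relate the height of $t \in K$ to the height of the image point on $C$. Consequently the factors $(d+1)$, $H(\iota)$, and the exponent $d$ in the power of $B$ all drop out. The starting observation is Lemma \ref{ProductHeights}: the hypothesis $H(P_t)\le B$ is exactly $H(j(E_t))\,H(j(E_t'))\le B$, so in particular each individual factor is bounded by $B$ since projective heights are $\ge 1$.

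For the case $H=\tilde H_\Delta$ I would apply Corollary \ref{pazc} to each cyclic $m$-isogeny $E_t\to E_{t,i}$ and $E_t'\to E_{t,i}'$ to obtain $H(j(E_{t,i}))\le Am^{12}H(j(E_t))$ and the primed analogue, then plug these into the same inner-product plus multiplicativity chain used in the original proof. This yields
$$H(F)\le (m+1)A^{2(m+1)}m^{24(m+1)}\bigl(H(j(E_t))H(j(E_t'))\bigr)^{m+1}\le (m+1)A^{2(m+1)}m^{24(m+1)}B^{m+1}.$$
Multiplying by the extra factor $H(j(E_t))H(j(E_t'))\le B$ arising from the identity $H(\iota_H(\tilde P_t))=H(F)H(j(E_t))H(j(E_t'))$ (which is itself Lemma \ref{ProductHeights} applied to the three-fold Segre embedding into $\mathbb{P}^7$) produces the claimed bound $(m+1)A^{2(m+1)}m^{24(m+1)}B^{m+2}$.

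For the case $H=H_{\operatorname p}$ I would combine Lemma \ref{ProductHeights}, giving
$$H(\iota_{H_{\operatorname p}}(\tilde P_t))=H(j(E_t))H(j(\tilde E_t))H(j(E_t'))H(j(\tilde E_t')),$$
with Corollary \ref{pazc} applied to each of the two cyclic $m$-isogenies $E_t\to \tilde E_t$ and $E_t'\to\tilde E_t'$. This bounds the product by $A^2m^{24}\bigl(H(j(E_t))H(j(E_t'))\bigr)^2\le A^2m^{24}B^2$, and absorbing the absolute constant $A^2$ (exactly as Proposition \ref{change of heights} absorbs it via $\lesssim$) gives the stated $m^{24}B^2$.

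There is essentially no obstacle: the argument is purely a bookkeeping simplification of Proposition \ref{change of heights}. The only place where one must be slightly careful is in checking that $H(\iota_H(\tilde P_t))$ factors through the product of the coordinate heights via Lemma \ref{ProductHeights} in both case I (three factors) and case II (four factors); once this factorization is in hand, replacing the bound $H(j(E_t))\le (d+1)H(\iota)B^d$ coming from Lemma \ref{5.3} by the tautological bound $H(j(E_t))\le B$ is mechanical and propagates unchanged through the rest of the original argument.
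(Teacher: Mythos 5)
Your proposal is correct and is essentially identical to the paper's own (very terse) proof, which simply says to repeat the proof of Proposition \ref{change of heights} with the single substitution $H(j(E_t))H(j(E_t')) \le B$ in place of the bound $H(j(E_t))H(j(E_t')) \le (d+1)H(\iota)B^d$ from Lemma \ref{5.3}. You have spelled out the same mechanical bookkeeping in more detail, including the correct observation that the stray $A^2$ in the $H_{\operatorname p}$ case is absorbed into the implicit constant, just as it is in the original proposition.
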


\begin{proof}
	The proof is the same as Proposition \ref{change of heights}, except that we have 
	$$H(j(E_{t}))H(j(E_{t}^{\prime})) \le B$$
	instead of Lemma \ref{5.3}.
\end{proof}

\begin{proof}[Proof of Theorem \ref{uniform version}]
	Let $S^{\prime}_{B,m,H}$ be the set of rational points on $C_{H}(K)$ which are preimages of $P_t$ for some $t \in S^{\prime}(B).$ A similar argument to the proof of Theorem \ref{main} yields 
	\begin{align}
	|S^{\prime}_{B, m, \tilde{H}_{\Delta}}| & \lesssim_{K} ((\alpha + (m+1)^{2}) d)^{4}((m+1)m^{24(m+1)}A^{2(m+1)}B)^{(m+2)\frac{2d_{K}}{\alpha d}}  \\
	 & \lesssim_{K} ((m^{2}+1)(m+1)d)^{4}((m+1)m^{24(m+1)}A^{2(m+1)}B^{(m+2)})^{\frac{2d_{K}}{m(m-1)(m+1)d}}\\
	 & \lesssim_{K} (m^{3}d)^{4}(m+1)^{\frac{2d_K}{m(m-1)(m+1) d}}A^{\frac{4d_{K}}{m(m-1)d}}m^{\frac{48d_{K}}{m(m-1)d}}B^{\frac{2d_{K}(m+2)}{m(m-1)(m+1)d}}\label{6.6}
	 \end{align} 
	 
	 when $H=\tilde{H}_{\Delta},$
	 and
	 \begin{equation}\label{6.7}
	 	|S^{\prime}_{B, m, H_{\operatorname{p}}}| \lesssim_{K} ((m+1)^{2}d)^{4}(m^{24}B^{2})^{\frac{8d_{K}}{(m+1)^{2}d}}.
	 \end{equation}
	
	We choose $m$ to be a prime between $2(\log B)^{\frac{1}{2}}$ and $4(\log B)^{\frac{1}{2}}$ as to control the growth of the $B$-power factors in both \ref{6.6} and \ref{6.7}, such that 
		
	\begin{align*}
B^{\frac{2d_{K}(m+2)}{m(m-1)(m+1)d}} &=e^{\frac{2d_{K}(m+2)}{m(m-1)(m+1)d} \log B} \\
&= e^{\frac{2d_{K} (\log B)}{(m-1)(m+1)}} \cdot e^{\frac{4d_{K} (\log B)}{m(m-1)(m+1)}} \\
&\le e^{N^{\prime}_{1}d_{K}} \cdot e^{N^{\prime}_{2}d_{K}}
\end{align*}
and  
$$
B^{\frac{16d_{K}}{(m+1)^{2}d}} = e^{\frac{16d_{K}}{(m+1)^{2}d} \log B} \le e^{N^{\prime}_{3}d_{K}}
$$
for some absolute constant $N^{\prime}_{1}$, $N^{\prime}_{2}$ and $N^{\prime}_{3}$. 
	\end{proof}

\section*{Acknowledgement} The author wishes to thank Jordan Ellenberg for many helpful suggestions. Also, The author wants to show her gratitude to Asvin G., Ananth Shankar and Xiaoheng (Jerry) Wang for the useful conversations and comments on the earlier version of the draft, and Chunhui Liu for pointing out the reference \cite{PS22} to her which helps sharpen the bound. We thank the referees for the valuable feedback and comments.

\section*{Compliance with ethical standards}
\subsection*{Conflicts of interest}
  The author states that there are no conflicts of interest.

\subsection*{Data availability} Data sharing not applicable to this article as no data sets were generated or analyzed.

\bibliography{references.bib}

\end{document}